%
%
%


\documentclass{amsproc}

\input{macros.tex}

\begin{document}

\setcounter{page}{203}

\title{Exactly solving the KPZ equation}


\author{Ivan Corwin}
\address{Columbia University, Department of Mathematics, 2990 Broadway, New York, NY 10027, USA}
\email{ivan.corwin@gmail.com}


\subjclass[2010]{60K35, 60K37, 82B43}

\date{\today}

\begin{abstract}
We present a complete proof of the exact formula for the one-point distribution for the narrow-wedge Hopf-Cole solution to the Kardar-Parisi-Zhang (KPZ) equation. This presentation is intended to be self-contained so no previous knowledge about stochastic PDEs, or exactly solvable systems is presumed.
\end{abstract}

\maketitle


\tableofcontents

\section{Introduction}
\subsection{Preface}
The study of the Kardar-Parisi-Zhang (KPZ) equation and its associated universality class has spurred and paralleled the development of an immense amount of deep and fascinating mathematics and physics. The core study of the KPZ equation and universality class warrants a detailed treatment, and so do many of the related fields developed in parallel (e.g. the study of non-linear stochastic PDES, or of integrable probabilistic systems). This  article purports to do much less -- the only goal being to present the main steps of a proof of the exact formula for the distribution of a particular solution to the KPZ equation. In so doing, we will touch on a number of the aforementioned related developments. Exercising great self-control, we will forego diversions into these areas.

Before delving into methods, let us quickly (and informally) answer two questions -- what is the KPZ equation and what does it mean to exactly solve it?

The KPZ equation was introduced in 1986 by Kardar, Parisi and Zhang \cite{KPZ} as a model for random interface growth. In one-spatial dimension (sometimes also called $(1+1)$-dimension to emphasize that time is one dimension too) it describes the evolution of a function $h(t,x)$ recording the height of an interface at time $t\geq 0$ above position $x\in \R$. The equation for $h(t,x)$ is the following non-linear stochastic partial differential equation (SPDE)\index{Kardar-Parisi-Zhang (KPZ)!equation}%
$$
\partial_t h(t,x) = \tfrac{1}{2} \partial_x^2 h(t,x) + \tfrac{1}{2}\big(\partial_x h(t,x)\big)^2 + \xi(t,x),
$$
which is driven by a space-time white noise $\xi$. The KPZ equation is supposed to be a representative model within a large class (called the KPZ universality class) in the sense that its long-time, large-scale behavior should be shared by many other growth models. The key characteristic of this KPZ universality class is supposed to be the presence of a local growth mechanism which involves a non-linear dependence on the local slope (here, the term $\tfrac{1}{2}\big(\partial_x h(t,x)\big)^2$), and  the presence of space-time random forcing (here, the term $\xi(t,x)$). The Laplacian provides a smoothing mechanism to compete with the roughening by the noise -- this mechanism is not believed to be particularly necessary for models to lie in the KPZ universality class.

Putting aside, for the moment, the interesting and important mathematical question of what it actually means to be a solution to this equation\footnote{Sometimes, see e.g. \cite{HairerKPZ}, people use the term ``solve the KPZ'' to mean constructing solutions. Here, we use the phrase ``exactly solve'' in place of ``solve'' to emphasize that we are interested not just in the existence of solutions but the exact formulas for their distribution.}, a physically important question regarding the KPZ equation is to understand what its solutions look like, from a statistical perspective. Since the white noise $\xi$ is random, the solution $h(t,x)$ will likewise be random. So, we seek to understand the distribution of this solution as well as its dependence on the initial data $h(0,x)$. The holy-grail is to have an exact formula for the distribution of the solution (or for certain characterizing functions, or marginals of the solution). Given such exact formulas, we might be able to exactly describe the long-time, large-scape behavior of the model (and hence predict the behavior of the entire KPZ universality class). For instance, is there a limiting deterministic growth velocity $v$ so that
\begin{equation}\label{eqdes1}
\lim_{t\to\infty}\frac{h(t,x)}{t} = v?
\end{equation}
If so, after centering by that velocity, what is the scale $\chi$ of the remaining fluctuations and what is their distribution $F(\cdot)$ so that for all $s\in \R$,
\begin{equation}\label{eqdes2}
\lim_{t\to\infty}\PP\left(\frac{h(t,x)-vt}{t^{\chi}}\leq s\right) = F(s)?
\end{equation}

It is certainly not assured (and generally not true) that an SPDE will have tractable formulas for the distribution of its solution (for instance, for the one-point marginal distribution at a fixed time and space location). However, the KPZ equation happens to be an exception as it is closely related to certain structures from integrable systems and representation theory.

\subsection{In what way is the KPZ equation exactly solvable?}
The first hint that there is a nice structure behind the KPZ equation comes from looking at the moments of its exponential (or Cole-Hopf transform)
$$
z(t,x) = e^{h(t,x)}.
$$
Formally, this transformation turns the KPZ equation into the stochastic heat equation (SHE) with multiplicative noise:\index{Stochastic heat equation (SHE)}%
$$
\partial_t z(t,x) = \tfrac{1}{2}\partial_{xx} z(t,x) + z(t,x) \xi(t,x).
$$
Writing $\EE$ to denote the expected value of a random variable with respect to the randomness induced by the noise $\xi$, notice that since $\xi$ is a centered Gaussian process, $\bar{z}(t,x):= \EE\big[z(t,x)\big]$ actually solves the deterministic heat equation $\partial_t \bar{z}(t,x) = \tfrac{1}{2}\partial_{xx} \bar{z}(t,x)$. Amazingly, higher moments
$$
\bar{z}(t,\vec{x}):= \EE\big[\prod_{i=1}^{k}z(t,x_i)\big],
$$
with $\vec{x}= (x_1,\ldots,x_k)$, also solve simple, deterministic evolution equations. This observation goes back to Kardar \cite{Kardar} and independently Molchanov \cite{Mol} wherein they argued that for any $k$,
$$
\partial_t \bar{z}(t,\vec{x}) = \big(H \bar{z}\big)(t,x),
$$
where $H$ acts in the $\vec{x}$-variables as
$$
H = \tfrac{1}{2} \sum_{i=1}^{k} \partial_{x^2_i} + \sum_{1\leq i<j\leq k} \delta(x_i-x_j=0).
$$
Here $\delta(x=0)$ is the delta functions at $0$ (so $\int f(x)\delta(x=0) = f(0)$ for a suitable class of $f$). $H$ is called the attractive, imaginary time delta Bose gas or Lieb-Liniger Hamiltonian, and it has been known since work of Lieb and Liniger from 1963 \cite{LL} that this Hamiltonian can be diagonalized explicitly using a tool known as the Bethe ansatz. \index{Bethe ansatz}%
Using those tools, one can show (see for instance \cite{BBC}) that for $x_1\leq \cdots \leq x_k$, \index{Nested contour integral}%
$$
\bar{z}(t;\vec{x}) =  \int_{\alpha_1-\I \infty}^{\alpha_1+\I\infty} \frac{dz_1}{2\pi \I} \cdots \int_{\alpha_k-\I \infty}^{\alpha_k+\I\infty} \frac{dz_k}{2\pi \I} \prod_{1\leq A<B\leq k} \frac{z_A-z_B}{z_A-z_B-1} \, \prod_{j=1}^{k} e^{\frac{t}{2} z_j^2 + x_jz_j},
$$
where we assume that $\alpha_1>\alpha_2 + 1 > \alpha_3 + 2> \cdots > \alpha_k + (k-1)$ (note that $z_i$ are complex variables of integration in the above formula on the right-hand side).

Armed with such a precise formula, one might expect that it is easy to recover the desired answers to questions \eqref{eqdes1} and \eqref{eqdes2}. There is, however, a fundamental problem -- the moments of the SHE do not characterize the distribution of the solution! Indeed, from the above formula one can show that the $k^{th}$ moment grows like exponential of $k^3$, which is far to fast to lead to a well-posed moment problem. Indeed, this is because the upper tail of $z$ is sufficiently heavy so that the moments really only tease out the nature of that tail. This phenomena is related to the concept of intermittency.

The key idea which will resolve this roadblock of an ill-posed moment problem is that we should ``be wise and discretize''. In other words, we will look for discrete regularizations of the KPZ equation which preserves the above mentioned algebraic properties, yet no longer suffers from this moment problem. As we will see in this article, $q$-TASEP provides one such option (there are others).

\subsection{Outline}
This article is split in two parts -- the first three sections are devoted to defining the KPZ equation and proving convergence of some discrete models to it. The remaining four sections then turn to the exact solvability, in particular focusing on the discrete model $q$-TASEP, as well as the semi-discrete directed polymer which arises as an intermediate limit of $q$-TASEP to the KPZ equation. In more detail, Section \ref{L1} defines what it means to solve the stochastic heat equation and constructs the delta initial data solution via chaos series. Section \ref{L3} turns to the KPZ equation and provides an explanation for why it arises as a continuum limit for many systems. Section \ref{L4} works out such a convergence result for the semi-discrete directed polymer in detail. Section \ref{L5} introduces the interacting particle system $q$-TASEP and gives its Markov duality. Section \ref{L6} uses this to give explicit contour integral formulas for certain $q$-moments of $q$-TASEP, and then describes the relation of duality and these formulas to the polymer replica method. Section \ref{L7} studies the intermittency of these moments (from the perspective of the parabolic Anderson model) and describes how to expand the contour integral formulas. Section \ref{L8} uses this expansion to gather all $q$-moments as a $q$-Laplace transform and performs the asymptotic analysis necessary to arrive at the KPZ equation statistics advertised above in the introduction. Section \ref{secspec} explains the origins of the integral formuls in Section \ref{L6} in terms of the coordinate Bethe ansatz and the spectral theory for the $q$-Boson process.  The appendices provide background on stochastic integration with respect to white noise, the Tracy-Widom distribution, and steepest descent asymptotic analysis.

\subsection{Acknowledgements}
This article is based on the author's lectures at a 2014 MSRI summer school on stochastic PDEs (and then revised for the 2017 AMS MAA Joint Meeting Short Course. The author thanks Jeffrey Kuan for writing up the handwritten notes, as well as for providing some valuable background materials, and Xuan Wu for making a close reading of the file and adding indexing to it; as well as Yuri Bahktin and James Nolin for co-organizing the school with him. The author was partially supported by the NSF through DMS-1208998, DMS-1664650, the Clay Mathematics Institute through a Clay Research Fellowship, the Institute Henri Poincare through the Poincare Chair, and the Packard Foundation through a Packard Fellowship for Science and Engineering.

\section{Mild solution to the stochastic heat equation}\label{L1}
The stochastic heat equation (SHE) with multiplicative noise looks (in differential form) like\index{Stochastic heat equation (SHE)}%
$$
\begin{cases}
\partial_t z &= \tfrac{1}{2}\partial_{xx}z + z\xi \\
z(0,x) &= z_0(x)
\end{cases}
$$
where $z:\mathbb{R}_+ \times \mathbb{R} \rightarrow \mathbb{R}$ and $z_0$ is (possibly random) initial data which is independent of the white noise $\xi$. Recall that formally $\xi$ has covariance
$$
\E\big[\xi(t,x)\xi(s,y)\big] \textrm{``}=\textrm{''} \delta_{t=s}\delta_{x=y},
$$
though this is only true in a weak, or integrated sense. See Appendix \ref{secwhitenoise} for background on $\xi$. The noise $\xi$ is constructed on a probability space $L^2(\Omega,\mathcal{F},\mathbb{P})$.

\begin{definition}
A \textit{mild solution} to the SHE satisfies for all $t>0,x\in \R$ the Duhamel form equation
$$
z(t,x) = \int_{\R} p(t,x-y)z_0(y)dy + \int_0^t \int_{\R} p(t-s,x-y)z(s,y)\xi(s,y)dyds,
$$
where $p(t,x) = \tfrac{1}{\sqrt{2\pi t}}e^{-x^2/2t}$ is the heat kernel or fundamental solution to $\partial_t p=\partial_{xx} p$, with $p(0,x)=\delta_{x=0}$. In this, we must have that $\int_0^t \int_{\R} p^2(t-s,x-y) \E\big[z^2(s,y)\big]dyds < \infty$ for the It\^{o} integrals to make sense and be finite.
\end{definition}

In this article we will be concerned only with the special delta initial data solution to the SHE. We will use a chaos series (see also Appendix \ref{secwhitenoise}) to construct the solution to the SHE with $z_0(x) = \delta_{x=0}$. \index{Chaos series}%

\begin{theorem}
Consider the class of solutions $z(t,x)$ to the mild formulation of SHE which satisfy {\rm(}i{\rm)} for all test functions $\psi,$ $\int_{\R} \psi(x)z(t,x)dx\rightarrow \int_{\R} \psi(x)\delta_{x=0}dx = \psi(0)$ as $t\searrow 0$ with $L^2(\Omega,\mathcal{F},\mathbb{P})$ convergence; {\rm(}ii{\rm)} for all $T>0$ there exists $C=C(T)>0$ such that for all $0<t\leq T$, and $x\in \R$, the bound $\E\big[z^2(t,x)\big]\leq C p^2(t,x)$ holds.

Within the above class of functions there exists a unique solution to SHE with $\delta_{x=0}$ initial data, and that solution takes the following form as a convergent {\rm(}in $L^2(\Omega,\mathcal{F},\mathbb{P})${\rm)} chaos series:
$$
z(t,x) = \sum_{k=0}^{\infty} \int_{\Delta_k(t)} \int_{\R^k} P_{k;t,x}(\vec{s},\vec{y}) d\xi^{\otimes k}(\vec{s},\vec{y}) =: \sum_{k=0}^{\infty} I_k(t,x),
$$
where $\Delta_k(t)=\{0\leq s_1 < \cdots < s_k\leq t\}$, and
$$P_{k;t,x}(\vec{s},\vec{y}) = p(t-s_k,x-y_k)p(s_k-s_{k-1}, y_k - y_{k-1}) \cdots p(s_2-s_1, y_2-y_1)p(s_1,y_1),$$
or in other words, represents the transition densities of a one-dimensional Brownian motion started at $(0,0)$ to go through the {\rm(}time,space{\rm)} points $(s_1,y_1),\ldots,(s_k,y_k)$, $(t,x)$. Also, recall from Appendix \ref{secwhitenoise} that $d\xi^{\otimes k}(\vec{s},\vec{y})$ is the multiple stochastic integral.
\end{theorem}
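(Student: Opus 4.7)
The plan is to construct the chaos series explicitly, verify it satisfies the Duhamel equation with delta initial data together with both conditions (i) and (ii), and then deduce uniqueness from the a priori bound. The main engine is the $L^2$ isometry for multiple stochastic integrals $\mathbb{E}\big[I_k(t,x)^2\big] = \int_{\Delta_k(t)} \int_{\mathbb{R}^k} P_{k;t,x}(\vec s,\vec y)^2 \, d\vec y \, d\vec s$ combined with the elementary identity $p(s,y)^2 = (4\pi s)^{-1/2}\, p(s/2, y)$ and the semigroup property of $p$.

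First I would bound $\mathbb{E}[I_k(t,x)^2]$. Applying the identity above to each factor of $P_{k;t,x}^2$ and using Chapman--Kolmogorov for the Gaussian densities $p(\cdot/2,\cdot)$, the spatial integration over $\vec y$ telescopes to
$$\int_{\mathbb{R}^k} P_{k;t,x}(\vec s,\vec y)^2 \, d\vec y \;=\; p(t/2,x)\,\prod_{j=1}^{k+1}\frac{1}{\sqrt{4\pi(s_j-s_{j-1})}},$$
with the convention $s_0=0$, $s_{k+1}=t$. The remaining integration over the Weyl chamber $\Delta_k(t)$ is a Dirichlet--Beta integral and evaluates to $\pi^{(k+1)/2}\, t^{(k-1)/2}/\Gamma((k+1)/2)$. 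After combining constants and using $p(t/2,x)=\sqrt{4\pi t}\,p(t,x)^2$, one obtains the clean estimate $\mathbb{E}[I_k(t,x)^2] = p(t,x)^2\cdot \sqrt{\pi}\,t^{k/2}/\big(2^k\,\Gamma((k+1)/2)\big)$. The $1/\Gamma((k+1)/2)$ factor decays super-exponentially, so the series converges in $L^2(\Omega,\mathcal{F},\mathbb{P})$ and at the same time produces the bound $\mathbb{E}[z(t,x)^2] \leq C(T)\, p(t,x)^2$ required in (ii).

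Next I would verify that $z:=\sum_k I_k$ solves the Duhamel equation and meets the initial condition. A Fubini-type interchange (legal by the $L^2$ summability just established) shows directly from the definition that $I_k(t,x) = \int_0^t\!\int_{\mathbb{R}} p(t-s,x-y)\, I_{k-1}(s,y)\, \xi(ds\,dy)$ for $k\geq 1$, and $I_0(t,x)=p(t,x)$; summing yields the mild equation with $z_0=\delta_0$. For (i), the $k=0$ term gives $\int \psi(x) p(t,x)\,dx \to \psi(0)$ deterministically; for $k\geq 1$, the same Beta-integral computation, but with the outer factor $p(t-s_k,x-y_k)$ replaced by $\int\psi(x)p(t-s_k,x-y_k)\,dx$, bounds the second moment of $\int \psi(x) I_k(t,x)\,dx$ by a quantity vanishing as $t\searrow 0$.

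Finally, for uniqueness, let $z$ be any mild solution in the stated class. Iterating the Duhamel identity $N$ times unfolds $z(t,x) = \sum_{k=0}^{N-1} I_k(t,x) + R_N(t,x)$, where $R_N$ is an $N$-fold iterated stochastic integral of $z$ itself against the heat kernel. Plugging hypothesis (ii) into the same semigroup/Beta calculation gives $\mathbb{E}[R_N(t,x)^2] \leq C\, p(t,x)^2 \cdot \sqrt{\pi}\, t^{N/2}/(2^N \Gamma((N+1)/2))$, which tends to $0$; hence $z = \sum_k I_k$ in $L^2$. The main obstacle throughout is the careful tracking of constants in the Beta integral, since the delta initial data produces an $s_1^{-1/2}$ singularity at the time origin. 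This singularity is exactly integrable, and it is precisely the resulting $\Gamma((k+1)/2)$ in the denominator that makes the chaos series (and hence the whole argument) converge. Assumption (ii) is used only to apply this identical $\Gamma$-decay mechanism to the remainder $R_N$, closing the uniqueness loop.
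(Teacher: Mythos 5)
Your proposal follows essentially the same route as the paper: compute $\mathbb{E}[I_k(t,x)^2]$ via the multiple-integral isometry, the identity $p^2 = (4\pi s)^{-1/2}p(s/2,\cdot)$, Chapman--Kolmogorov, and the Dirichlet--Beta integral; conclude $L^2$-convergence of the chaos series and the a priori bound (ii); and then get uniqueness by iterating the Duhamel equation and killing the remainder $R_N$ with the same $\Gamma$-decay. Incidentally, your constant $\sqrt{\pi}\,t^{k/2}/\big(2^k\,\Gamma((k+1)/2)\big)$ is the correct one (with $k+1$ factors in the Dirichlet integral); the paper's Lemma~\ref{lemIk} has $\Gamma(1/2)^k/\Gamma(k/2)$, which appears to be an off-by-one typo that does not affect the conclusion.
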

\begin{proof}
We proceed according to the following steps:
\begin{enumerate}
\item Show that the chaos series is convergent.
\item Prove that $\E\big[z^2(t,x)\big] \leq C p^2(t,x)$.
\item Show that the series solves SHE with $\delta_{x=0}$ initial data.
\item Argue uniqueness.
\end{enumerate}
\noindent{\bf Step (1):} Consider a random variable $X\in L^2(\Omega,\mathcal{F},\mathbb{P})$
such that
$$
X = \sum_{k=0}^{\infty} \int_{\Delta_k(\infty)} \int_{\R^k} f_k(\vec{t},\vec{x})d\xi^{\otimes k}(\vec{t},\vec{x}).
$$
for functions $f_k\in L^2(\Delta_k(\infty) \times \R^k ), k=0,1,2,\ldots$. (From Appendix \ref{secwhitenoise} we know that all $X$ have a unique decomposition into this form.)
By the multiple stochastic integral covariance isometry
$$
\E\left[ \int_{\Delta_k(\infty)}\int_{\R^k} f(\vec{t},\vec{x})d\xi^{\otimes k}(\vec{t},\vec{x}) \int_{\Delta_j(\infty)}\int_{\R^k} g(\vec{t},\vec{x})d\xi^{\otimes j}(\vec{t},\vec{x})\right] = \mathbf{1}_{j=k} \langle f,g\rangle_{L^2(\Delta_k \times \R_k)}.
$$
Thus,
$$
\E\big[X^2\big] = \sum_{k=0}^{\infty} \Vert f_k \Vert^2_{L^2(\Delta_k \times \R^k)}.
$$
This can be applied to the series for $z(t,x)$.

\begin{exercise}\label{exint}
Prove that
$$
p^2(t,y) = \frac{1}{\sqrt{2\pi t}}p(t,\sqrt{2}y) = \frac{1}{\sqrt{4\pi t}} p(t/2,y)$$
and
$$
\int_{\R} p^2(t-u,x-z)p^2(u-s,z-y)dz = \sqrt{\frac{t-s}{4\pi(t-u)(u-s)}}p^2(t-s,x-y).
$$
\end{exercise}

\begin{lemma}\label{lemIk}
$$
\E\big[I_k^2(t,x)\big] = \frac{t^{k/2}}{(4\pi)^{k/2}}\frac{\Gamma(\tfrac{1}{2})^k}{\Gamma(\tfrac{k}{2})} p^2(t,x).
$$
\end{lemma}
\begin{proof}
The multiple stochastic integral covariance isometry implies that
$$
\E\big[I_k^2(t,x)\big]  = \int_{\Delta_k(t)} \int_{\R^k} P_{k;t,x}^2(\vec{s},\vec{y})d\vec{y}d\vec{s},
$$
or more explicitly,
\begin{align*}
\E\big[I_k^2(t,x)\big] = \int_{\Delta_k(t)}d\vec{s}\int_{\R^k} d\vec{y}\; &p^2(s_1,y_1)p^2(s_2-s_1,y_2-y_1) \cdots\\
 &\quad\cdots p^2(s_k-s_{k-1},y_k-y_{k-1})p^2(t-s_k,x-y_k).
\end{align*}
Taking the integrals starting with $y_k$ down to $y_1$ and using Exercise \ref{exint},
\begin{align*}
\E\big[I_k^2(t,x)\big] &= \int_{\Delta_k(t)} d\vec{s} \;p^2(t,x) \sqrt{\frac{t}{4\pi (t-s_1)s_1}} \sqrt{\frac{t-s_1}{4\pi (t-s_2)(s_2-s_1)}} \cdots \\
&\qquad\qquad\qquad\qquad\qquad\cdots\sqrt{\frac{t-s_{k-1}}{4\pi (t-s_k)(s_k-s_{k-1})}}\\
&= \frac{t^{1/2}p^2(t,x)}{(4\pi)^{k/2}} \int_{\Delta_k(t)} d\vec{s} \frac{1}{\sqrt{s_1}} \frac{1}{\sqrt{s_2 -  s_1}}\cdots \frac{1}{\sqrt{s_k - s_{k-1}}} \frac{1}{\sqrt{t - s_k}}\,.
\end{align*}
Changing variables to factor out the $t$'s yields
$$
\E\big[I_k^2(t,x)\big] = \frac{t^{k/2}}{(4\pi)^{k/2}} p^2(t,x) \int_{\Delta_k(1)} d\vec{s} \frac{1}{\sqrt{s_1}} \frac{1}{\sqrt{s_2 -  s_1}} \cdots \frac{1}{\sqrt{s_k -  s_{k-1}}} \frac{1}{\sqrt{1 - s_k}}\,.
$$
\begin{exercise}
For $\alpha_i>0$, $1\leq i\leq k$ show that
$$
\int\displaylimits_{\substack{0\leq x_1,\cdots,x_k\leq 1 \\ \sum x_i=1 }} d\vec{x} \prod_{i=1}^{k} x_i^{\alpha_i-1} = \frac{\prod_{i=1}^k \Gamma(\alpha_i)}{\Gamma(\sum_{i=1}^k \alpha_i)}.
$$
This is called the Dirichlet($\alpha$) distribution.
\end{exercise}
Using the exercise we conclude that
\[\E\big[I_k^2(t,x)\big] = \frac{t^{k/2}}{(4\pi)^{k/2}} \frac{\Gamma(1/2)^k}{\Gamma(k/2)}p^2(t,x).\qedhere\]
\end{proof}

Since $\Gamma(k/2) \sim (k/2)!$ this implies that the chaos series  converges in $L^2(\Omega,\mathcal{F},\mathbb{P})$ thus completing step 1.\medskip

\noindent{\bf Step (2):}
By Lemma \ref{lemIk} and the It\^{o} isometry,
$$
\E\big[z^2(t,x)\big] = \sum_{k=0}^{\infty} \E\big[I_k^2(t,x)\big] = p^2(t,x)\sum_{k=0}^{\infty} \frac{t^{k/2}}{(4\pi)^{k/2}} \frac{\Gamma(1/2)^k}{\Gamma(k/2)} \leq C p^2(t,x),
$$
where the constant $C=C(T)$ can be chosen fixed as $t$ varies in $[0,T]$.\medskip

\noindent{\bf Steps (3) and (4):}
Assume $z(t,x)$ solves SHE with delta initial data. We will show that $z(t,x)$ equals the chaos series. The mild form of SHE implies that
$$
z(t,x) = p(t,x) + \int_0^t \int_{\R} p(t-s,x-y) z(s,y)\xi(s,y)dyds.
$$
Iterate this (like Picard iteration) to obtain
\begin{align*}
z(t,x) = \sum_{k=0}^n I_k(t,x) + \int_{\Delta_{n+1}(t)}\int_{\R^{n+1}} &p(t-s_{n+1},x-y_{n+1})\cdots\\
&\quad\cdots p(s_2-s_1,y_2-y_1) z(s_1,y_1)\xi^{\otimes k}(\vec{s},\vec{y})d\vec{s}d\vec{y}.
\end{align*}
Comptue the $L^2(\Omega,\mathcal{F},\mathbb{P})$ norm of the remainder as
$$
\int_{\Delta_{n+1}(t)}\int_{\R^{n+1}} p^2(t-s_{n+1},x-y_{n+1})\cdots p^2(s_2-s_1,y_2-y_1)\cdot \E\big[z^2(s_1,y_1)\big]d\vec{y}d\vec{s}.
$$
Use bound $\E[z^2(s_1,y_1)]\leq cp^2(s_1,y_1)$ and notice that the above expression is then bounded as
$$
\leq c\,\E[I_{n+1}(t,x)] = c\frac{t^{(n+1)/2}}{(4\pi)^{n/2}} \frac{\Gamma(1/2)^{n+1}}{\Gamma(n/2)}p^2(t,x),
$$
which goes to zero as $n\rightarrow\infty$ hence showing that $z$ is the chaos series and that the chaos series solves the mild form of SHE.
\end{proof}

We close by demonstrating one use of the chaos series to probe the short time behavior of $z(t,x)$.
\begin{corollary}
The following one-point\footnote{Extending to space-time process convergence is not much harder.} weak convergence  holds:
$$
\lim_{\epsilon\rightarrow 0} \epsilon^{-1/4} [z(\epsilon t,\epsilon^{1/2}x) - p(\epsilon t,\epsilon^{1/2}x)] = \int_0^t \int_{\R} p(t-s,x-y)\xi(s,y)p(s,y)dyds.
$$
\end{corollary}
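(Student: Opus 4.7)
The natural approach is to substitute the chaos series for $z$ into the left-hand side and identify the $k=1$ term as the only one that survives the rescaling. Setting $I_0(t,x):=p(t,x)$ for the deterministic contribution, the theorem gives
$$
z(\epsilon t,\epsilon^{1/2}x) - p(\epsilon t,\epsilon^{1/2}x) = \sum_{k=1}^{\infty} I_k(\epsilon t,\epsilon^{1/2}x),
$$
so the corollary will follow once we show (a) the appropriately rescaled $k=1$ term converges in distribution to the Wiener integral on the right-hand side, and (b) the rescaled tail $\sum_{k\geq 2}$ converges to $0$ in probability. Slutsky's theorem then concludes.

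For step (a), the two key inputs are the Brownian self-similarity $p(\epsilon t,\epsilon^{1/2}x)=\epsilon^{-1/2}p(t,x)$ and the self-similarity of space-time white noise under $(s,y)\mapsto(\epsilon s,\epsilon^{1/2}y)$. Changing variables $s=\epsilon s'$, $y=\epsilon^{1/2}y'$ in
$$
I_1(\epsilon t,\epsilon^{1/2}x) = \int_0^{\epsilon t}\int_\R p(\epsilon t-s,\epsilon^{1/2}x-y)\,p(s,y)\,\xi(s,y)\,dy\,ds
$$
and collecting powers of $\epsilon$ coming from the two heat-kernel factors, the Lebesgue measure $ds\,dy$, and the distributional rescaling of $d\xi$, produces a distributional identity of the form $\epsilon^{\alpha}I_1(\epsilon t,\epsilon^{1/2}x)\overset{d}{=}I_1(t,x)$. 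Since $I_1$ lives in the first Wiener chaos, it is mean-zero Gaussian, so this identity is \emph{exact} for every $\epsilon>0$ as soon as the two variances (read off from Lemma \ref{lemIk}) match; in particular the rescaled $k=1$ piece is already exactly distributed as the right-hand side of the corollary.

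For step (b), Lemma \ref{lemIk} combined with $p^2(\epsilon t,\epsilon^{1/2}x)=\epsilon^{-1}p^2(t,x)$ gives
$$
\E\big[I_k^2(\epsilon t,\epsilon^{1/2}x)\big] = \epsilon^{(k-2)/2}\cdot \frac{t^{k/2}\,\Gamma(1/2)^k}{(4\pi)^{k/2}\,\Gamma(k/2)}\,p^2(t,x).
$$
By orthogonality of distinct chaoses, the $L^2$ norm squared of the rescaled tail is the sum of these variances, weighted by the squared rescaling factor. With the scaling exponent chosen to balance the $k=1$ variance, every $k\geq 2$ term carries a strictly positive power of $\epsilon$ and hence vanishes, while the infinite sum is dominated uniformly in $\epsilon\leq 1$ by the same $\Gamma(k/2)$ decay that powered the chaos-series convergence in the theorem. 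Chebyshev then upgrades this $L^2$ bound to convergence in probability.

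The main obstacle is pure bookkeeping: carefully tallying the powers of $\epsilon$ coming from the two heat-kernel factors, the rescaled Lebesgue measure, and the self-similarity of white noise, so that the exponent appearing in the statement matches exactly the one that equalizes the $k=1$ variance, and then checking that the tail estimate is genuinely uniform in $\epsilon$. There is no probabilistic subtlety beyond this — once the bookkeeping is in order, Slutsky combines the exact Gaussian law of the rescaled $k=1$ piece with the vanishing higher chaos to produce the claimed weak convergence.
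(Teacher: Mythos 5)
Your proposal is essentially the paper's own argument: decompose $z-p=\sum_{k\ge 1}I_k$, use the self-similarity of white noise to show the rescaled $I_1$ is exactly equal in law to the right-hand side, and use Lemma~\ref{lemIk} (with $p^2(\epsilon t,\epsilon^{1/2}x)=\epsilon^{-1}p^2(t,x)$) to make the rescaled $k\ge 2$ tail vanish in $L^2$. You fill in the tacit steps (Slutsky, Chebyshev, uniform domination of the tail by the $\Gamma(k/2)$ decay), but the decomposition, the key lemma, and the scaling mechanism are identical to the paper's.

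One point worth flagging: when you carry out the bookkeeping you describe, you will find that balancing the $k=1$ variance requires $\epsilon^{+1/4}I_1(\epsilon t,\epsilon^{1/2}x)\overset{d}{=}I_1(t,x)$ (indeed $\E[I_1^2(\epsilon t,\epsilon^{1/2}x)]=\epsilon^{-1/2}\E[I_1^2(t,x)]$), which makes the $k\ge 2$ terms carry $\epsilon^{(k-1)/2}\to 0$; the prefactor $\epsilon^{-1/4}$ printed in the corollary and the accompanying exercise appears to be a sign typo. Your hedged phrasing ("the scaling exponent chosen to balance the $k=1$ variance") keeps the argument correct regardless.
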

\begin{proof}
This comes from the first term in the chaos series whereas our estimate on $\E[I_k(t,x)^2]$ enables us to discard all subsequent terms. Recall
$$
I_1(t,x) = \int_0^t \int_{\R} p(t-s,x-y)\xi(s,y)p(s,y)dyds.
$$
Apply scalings to study now $\epsilon^{-1/4}I_1(\epsilon t, \epsilon^{1/2} x)$.

\begin{exercise}
Show that in distribution,
$$
\xi(t,x) = \epsilon^{(z+1)/2} \xi(\epsilon^z t, \epsilon x).
$$
Use it to show that $\ep^{-1/4}I_1(\ep t,\ep^{1/2} x) = I_1(t,x)$ in distribution  whereas we have $\ep^{-1/4} I_k(\ep t,\ep^{1/2} x) = \ep^{a_k} I_k(t,x)$. Compute $a_k>0$.
\end{exercise}
From this exercise, it is now easy to conclude the corollary.
\end{proof}

Unfortunately, the chaos series is not very useful in studying large $t$ behaviors (e.g. for $\log z(t,x)$) and also the chaos series do not behave very well under applications of functions. However, they can be useful in proving approximation as we will soon see.


\section{Scaling to the KPZ equation}\label{L3}
The heat equation with deterministic potential $v(t,x)$ is written in differential form as
$$
\partial_t z = \frac{1}{2}\partial_{xx} z + vz.
$$

\begin{exercise}
Show that if $h:=\log z$ and $u:=\partial_x h$ then
\begin{align*}
\partial_t h &= \frac{1}{2}\partial_{xx}h + \frac{1}{2}(\partial_x h)^2 + v, \\
\partial_t u &= \frac{1}{2}\partial_{xx}u + \frac{1}{2}\partial_x (u^2) + \partial_x v.
\end{align*}
\end{exercise}
The first equation is a Hamilton Jacobi equation with forcing and the second equation is a viscous Burgers equation with conservative forcing.

Define the mollifier $p_\kappa(x)=p(\kappa,x)$ (i.e. the heat kernel at time $\kappa$). Note that as $\kappa\rightarrow 0$, this approaches a delta function as its maximal height grows like $\kappa^{-1/2}$ and standard deviation shrinks like $\kappa^{1/2}$. We could just as well work with any similar mollifier sequence. Define mollified white-noise via
$$
\xi_\kappa(t,x) = \int_{\R} p_\kappa(x-y)\xi(t,y)dy = (p_\kappa * \xi)(t,x).
$$
By the It\^{o} isometry,
$$
\E\big[\xi_\kappa(t,x)\xi_\kappa(s,y)\big] = \delta_{t=s} c_k(x-y), \quad c_\kappa(x) = (p_\kappa * p_\kappa)(x).
$$
Note that as $\kappa\rightarrow 0$, we have $c_\kappa(x)\rightarrow\infty$.

\begin{exercise}
Define $z_\kappa$ to be the solution (same definition of solution as before) to
$$
\partial_t z_\kappa = \frac{1}{2}\partial_{xx} z_\kappa + \xi_\kappa z_\kappa,
$$
and  $h_\kappa := \log z_\kappa$. Show that
$$
\partial_t h_\kappa = \frac{1}{2}\partial_{xx} h_\kappa + \big[\frac{1}{2}(\partial _x h_\kappa)^2 - c_\kappa(0)\big] + \xi_\kappa.
$$
Also show that $z_\kappa\rightarrow z$ as $\kappa\searrow 0$. (Suggestion: use the chaos series.)
\end{exercise}
The exercise shows that in some sense, we can think of $h=\log z$ as the solution to
$$
\partial_t h = \frac{1}{2}\partial_{xx}h + \big[\frac{1}{2}(\partial _x h_k)^2 - \infty\big] + \xi.
$$
Since when studying interface fluctuations we generally need to subtract off overall height shifts, this $-\infty$ should not concern us so much. This suggests the definition
\begin{definition}
The Hopf--Cole solution $h:\R_{+}\times\R\rightarrow \R$ to the KPZ equation with $h_0:\R\rightarrow\R$ initial data is defined as $h=\log z$ where $z$ is the solution to the SHE with initial data $z_0(x)=e^{h_0(x)}$.
\end{definition}
For some initial conditions such as $\delta_{x=0}$, it does not make sense to take its logarithm. However, for all positive $t$ and $x\in \R$, we know that $z(t,x)$ is strictly positive (see \cite{Mueller,Gregorio}), hence the logarithm is well defined. The solution to KPZ corresponding to this delta initial data is called the narrow wedge solution (or KPZ with narrow wedge initial data). The idea is that in short time the log of $p(t,x)$ looks like $-x^2/2t$ which is a steep parabola. Hence the initial data corresponds to growth out of a wedge type environment. We will see a justification of this later.

Now let us ask: what kind of discrete models converge to the KPZ equation? To motivate this, let us do some rescaling of the KPZ equation.

Define the scaled KPZ solution (we temporarily use $z$ here as the ``dynamic scaling exponent'' in accordance with other literature)
$$
h^{\ep}(t,x) = \ep^{b} h(\ep^{-z}t,\ep^{-1}x).
$$
We find that under this scaling,
\begin{align*}
\partial_t h &= \ep^{z-b} \partial_t h^{\ep} \\
\partial_x h &= \ep^{1-b}\partial_x h^{\ep},\qquad (\partial_x h)^2 = \ep^{2-2b}(\partial_x h^{\ep})^2\\
\xi(t,x) &\stackrel{(d)}{=} \ep^{-(z+1)/2}\xi(\ep^{-z}t,\ep^{-1}x).
\end{align*}
Note that above, when writing $\partial_t h$ and $\partial_x h$ we mean to take the derivatives and then evaluate them at $\ep^{-z}t$ and $\ep^{-1}x$ respectively.
Thus
$$
\partial_t h^{\ep} = \frac{1}{2}\ep^{2-z} \partial_{xx} h^{\ep} + \frac{1}{2} \ep^{2-z-b} (\partial_x h^{\ep})^2 + \ep^{b-z/2+1/2}\xi.
$$
Note that the noise on the right-hand side is not the same for different $\ep$, but in terms of its distribution it is.

We now pose a natural question: are there any scalings of the KPZ equation under which it is invariant? If so, then we can hope to scale growth processes in the same way to arrive at the KPZ equation. However, one can check that there is no way to do this! On the other hand, there are certain weak scalings which fix the KPZ equation. By weak, we mean that simultaneously as we scaling time, space and fluctuations, we also can put tuning  parameters in front of certain terms in the KPZ equation and scale them with $\ep$. In other words, we simultaneously scale time, space and fluctuations, as well as the model. Let us consider two such weak scalings.\medskip

{\bf Weak non--linearity scaling:} Take $b=1/2,z=2$. The first and third terms stay fixed, but the middle term blows up. Thus, insert the constant $\lambda_{\ep}$ in front of the nonlinear term $(\partial xh)^2$ and set $\lambda_{\ep}=\ep^{1/2}$. Under this scaling, the KPZ equation is mapped to itself.\medskip

{\bf Weak noise scaling:} Take $b=0,z=2$. Under this scaling, the linear $\partial_{xx} h$ and nonlinear $(\partial_x h)^2$ terms stay fixed, but now the noise blows up. So insert $\beta_{\ep}$ in front of the noise term and set $\beta_{\ep}=\ep^{1/2}$, and again the KPZ equation stays invariant.\medskip

One can hope that these rescalings are attractive. In other words, if you take models with a parameter (nonlinearity or noise) that can be tuned, that these models will all converge to the same limiting object. We will see that weak non--linearity scaling can be applied to particle growth processes and weak noise scaling can be applied to directed polymers.

We would be remise at this point if we did not mention the broader notion of KPZ universality (see \cite{ICreview,QuastelReview} for surveys on this subject). It was predicted by \cite{FNS,KPZ} is that under $b=1/2$ and $z=3/2$ scaling (called KPZ scaling) the KPZ equation should have non-trivial limiting behavior. The exact nature of this limiting behavior (called the KPZ fixed point in \cite{CQfixedpt}) is still somewhat nebulous as we will briefly observe. The fixed point should be the limit of particle growth processes and directed polymers. To understand more about the KPZ fixed point, we will need to develop exact solvability, which will be done in later sections.\index{Kardar-Parisi-Zhang (KPZ)!universality}%

Here is a fact (which can be shown, for instance from the approximation of the KPZ equation from ASEP). If we start KPZ with two--sided Brownian motion, then later it will be a two--sided Brownian motion plus a height shift. One can see this by showing that particle systems preserve random walks. This suggests $b=1/2$ (the Brownian scaling). In this case, the only way to avoid blowup is to take $z=3/2$. So this suggests that KPZ scaling is, indeed, a natural candidate. But in this case, it looks like the noise and the linear term disappears. What would be left would be the inviscid Burgers equation. But this is not true, since, for example, the inviscid Burgers equation is deterministic and also since the KPZ equation must preserve Brownian motion, but the inviscid Burgers equation does not.

\begin{figure}[ht]
\begin{center}
\includegraphics[scale=.8]{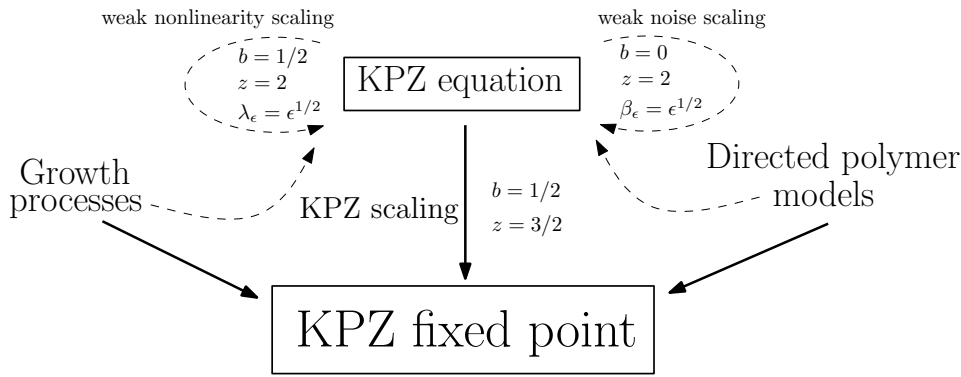}
\end{center}
\caption{Three types of scalings for the KPZ equation. Weak noise and weak non--linearity scaling fix the equation whereas under KPZ scaling, the KPZ equation should go to the (nebulous) KPZ fixed point. It is believed (and in some cases shown) that these ideas extend to a variety of growth processes and directed polymer models.}
\end{figure}

Because we presently lack a stochastic description of the KPZ fixed point, most of what we know about it comes from asymptotic analysis of a few exactly solvable models. Moreover, most information comes in the form of knowledge of certain marginal distributions of the space-time KPZ fixed point process.

In order to illustrate the ideas of weak universality of the KPZ equation, lets consider the corner growth model, which is equivalent to ASEP.
Growth of local minima into local maxima occurs after independent exponentially distributed waiting times of rate $p$ and the opposite (local maxima shrinking into local minima) occurs with rate $q$, with $p+q=1$ and $p-q=: \gamma >0$. The step initial data corresponds to setting $h^{\ga}(0,x) = \vert x\vert$. Here we use $h^{\ga}(t,x)$ to record the height function at time $t$ above position $x$, for the model with asymmetry $\ga$.

\begin{figure}[ht]
\begin{center}
\includegraphics[scale=0.8]{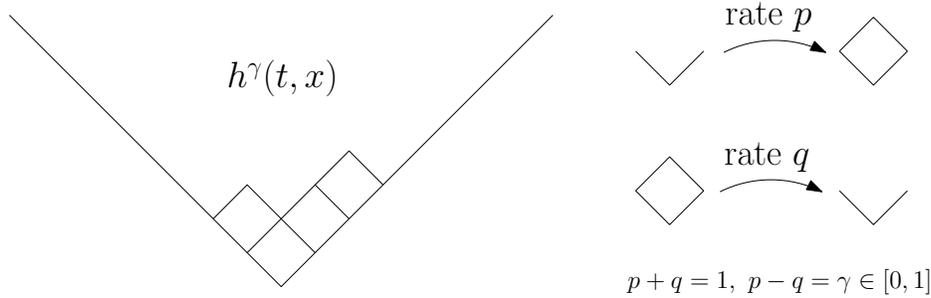}
\end{center}
\caption{Corner growth model, or equivalently asymmetric simple exclusion process (ASEP) started from step initial data.}
\end{figure}

The following theorem is an example of how the KPZ equation arises under weak non--linearity scaling.

\begin{theorem}[\cite{ACQ}] For the step initial data corner growth model,
$$
z_{\ep}(t,x) = \frac{1}{2}\ep^{-1/2}\exp\left( -\ep^{1/2} h^{\ep^{1/2}}(\ep^{-2}t,\ep^{-1}x)  + \ep^{-3/2}\frac{t}{2}\right)
$$
converges to the solution to the SHE with $\delta_{x=0}$ initial distribution. Convergence here means as probability measures on $C([\tilde{\delta},T],\C(\R))$ (continuous functions of time to the space of continuous spatial trajectories) for any $\tilde{\delta}, T>0$.
\end{theorem}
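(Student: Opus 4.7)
The strategy I would follow is the one pioneered by Bertini--Giacomin and refined in \cite{ACQ}: build a discrete Hopf--Cole (Gärtner) transform that turns the height fluctuations into an object satisfying a linear discrete SPDE, show tightness, then identify the limit as a solution of the mild SHE.

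First I would verify the algebraic miracle underlying the definition of $z_\ep$. Writing $N_x(t)$ for the signed particle flux across bond $(x,x+1)$ in the associated ASEP and using the known jump generator of the corner growth dynamics, one computes $dz_\ep$ and discovers that the specific choice of the prefactor $\ep^{-1/2}$, the exponent $-\ep^{1/2}$, and the velocity counterterm $\ep^{-3/2} t/2$ cancels an otherwise divergent $O(\ep^{-3/2})$ drift exactly. After this cancellation $z_\ep$ satisfies, in the natural filtration of ASEP, a discrete semimartingale evolution of the form
\[
dz_\ep(t,x) \;=\; \tfrac12 \Delta_\ep\, z_\ep(t,x)\,dt \;+\; dM_\ep(t,x),
\]
where $\Delta_\ep$ is a nearest-neighbor discrete Laplacian on $\ep\Z$, tuned to converge to $\partial_{xx}$ under the chosen diffusive scaling, and $M_\ep$ is a martingale whose predictable quadratic variation has density (with respect to $ds\,dy$) of the form $z_\ep^2(s,y) + o_\ep(1)$, again thanks to the specific choice of scaling exponents.

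Next I would establish tightness of $\{z_\ep\}$ on $C([\tilde\delta,T],C(\R))$ by deriving uniform moment bounds of the form $\EE[z_\ep(t,x)^2] \leq C(T)\,p^2(t,x)$ for $0<t\leq T$. Such bounds are available via self-duality of ASEP or directly through moment formulas analogous to the nested contour integrals discussed in the introduction; combined with analogous increment bounds and a Kolmogorov-type criterion, they give tightness away from $t=0$. For the identification of any subsequential limit $z$, I would pass to the limit in the mild (discrete Duhamel) form
\[
z_\ep(t,x) \;=\; \sum_{y\in\ep\Z} p^\ep_t(x-y)\, z_\ep(0,y) \;+\; \int_0^t\!\!\int_\R p^\ep_{t-s}(x-y)\, dM_\ep(s,y),
\]
where $p^\ep$ is the semigroup of $\tfrac12\Delta_\ep$. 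The deterministic term converges to $p(t,x)$ because the step-initial-data Gärtner transform concentrates mass of order $1$ at the origin on a window of width $\ep$. For the stochastic term, a martingale functional central limit theorem together with the computation of $\langle M_\ep\rangle$ yields convergence to $\int_0^t\!\int_\R p(t-s,x-y)\, z(s,y)\,\xi(s,y)\,dy\,ds$. Thus any subsequential limit is a mild solution to SHE with $\delta_{x=0}$ initial data in the class already proved unique earlier, and full convergence follows.

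The main obstacle is Step~3, namely the identification of the noise in the limit. One must control the quadratic variation of $M_\ep$ sufficiently finely to show that its density tends to $z(s,y)^2\,ds\,dy$ rather than acquiring extra drift or non-white correlations; this requires a careful analysis of two-point ASEP correlations under the weak asymmetry scaling $p-q=\ep^{1/2}$, and it is here that the specific choice of the counterterm $\ep^{-3/2}t/2$ plays its most delicate role. A second, closely related difficulty is handling the narrow-wedge initial profile $h^{\ga}(0,x)=\vert x\vert$: one must show that for any fixed $\tilde\delta>0$ the random profile $z_\ep(\tilde\delta,\cdot)$ concentrates near $p(\tilde\delta,\cdot)$ uniformly in $\ep$, which can be done by matching explicit ASEP one-point formulas at time $\ep^{-2}\tilde\delta$ with the Gaussian heat kernel using steepest descent, combined with the second-moment bound above to upgrade pointwise control to a process-level statement.
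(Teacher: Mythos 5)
The paper does not actually prove this theorem: it is stated without proof and attributed to \cite{ACQ}, which in turn builds on the Bertini--Giacomin weakly asymmetric ASEP limit. There is therefore no internal proof in the paper to compare your proposal against.

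That said, your sketch is a faithful outline of the argument in the cited reference. The G\"artner (discrete Hopf--Cole) transform with scaling tuned so that the divergent drift cancels and $z_\ep$ satisfies a linear discrete semimartingale equation, tightness via second-moment bounds against $p^2(t,x)$, and identification of the limit via the mild (Duhamel) form and a martingale functional CLT --- these are exactly the steps. The two obstacles you flag are in fact the substantive ones. For (i), the wrinkle worth being explicit about is that $\langle M_\ep\rangle$ does not literally have density $z_\ep^2(s,y)+o_\ep(1)$; it involves nearest-neighbor products of the form $z_\ep(s,y)z_\ep(s,y\pm\ep)$ together with occupation-variable corrections, and showing these collapse to $z^2(s,y)$ in the limit is where the analysis of two-point ASEP correlations under the $\ep^{1/2}$-asymmetry scaling enters. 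For (ii), the step/narrow-wedge initial data genuinely requires a separate argument from the near-stationary initial data treated by Bertini--Giacomin: one must show that $z_\ep(\tilde\delta,\cdot)$ concentrates near $p(\tilde\delta,\cdot)$ uniformly in $\ep$, and this is precisely why the theorem is stated as convergence on $C([\tilde\delta,T],C(\R))$ for $\tilde\delta>0$ rather than down to $t=0$. Both points are handled in \cite{ACQ}, and your proposal correctly identifies them as the places where the work lies.
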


What about weak noise scaling? One example of how this leads to the KPZ equation is through directed polymers, a class of models introduced in \cite{HuHe}. To motivate this, lets return to the heat equation with deterministic potential:
$$
\partial_t z = \frac{1}{2}\partial_{xx} z + vz.
$$
Recall the Feynman-Kac representation. \index{Feynman-Kac representation}%
Consider a Brownian motion run backwards in time, fixed so that $b(t)=x$, and let $\mathcal{E}_{b(t)=x}$ represent the expectation with respect to this Brownian motion. Then
$$
z(t,x) = \mathcal{E}_{b(t)=x}\left[ z_0(b(0)) \exp\left( \int_0^t v(s,b(s))ds \right) \right].
$$
\begin{exercise}
Prove this. Hint: This can be proved via a chaos series in $v$.
\end{exercise}

\begin{figure}[ht]
\begin{center}
\includegraphics[scale=.6]{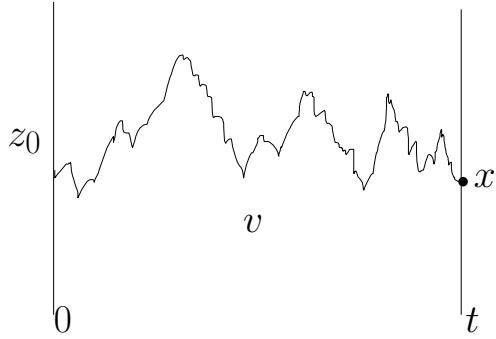}
\end{center}
\caption{Path integral solution to heat equation with potential $v$.}
\end{figure}

Even if $z_0$ is not a function, this can still make sense. For example, if $z_0(x) = \delta_{x=0}$ then the entire expression is killed if you don't end up at $0$. Thus we can replaced Brownian motion with a Brownian bridge at the cost of a factor of $p(t,x)$.

What if $v=\xi_\kappa=(p_\kappa * \xi)(t,x)$? Then the solution, $z_\kappa$, to the mollified equation
$$
\partial_t z_\kappa = \frac{1}{2}\partial_{xx}z_\kappa + \xi_\kappa z_\kappa
$$
can be represented via the following path integral
$$
z_\kappa(t,x) = \mathcal{E}_{b(t)=x} \left[ z_0(b(0)) \exp\left(\int_0^t \xi_\kappa(s,b(s))ds - \frac{c_\kappa(0)t}{2} \right)\right],
$$
where $c_\kappa(x) = (p_\kappa * p_\kappa)(x)$. See \cite{BC} for more on this.

Let us discretize this whole picture. We will discretize the white noise by i.i.d. random variables and the Brownian motion by a simple symmetric random walk. Consider up/right paths $\pi$ on $(\Z_+)^2$. Let $w_{ij}$ be i.i.d. random variables and $\beta>0$ be inverse temperature. Let the partition function be
$$
Z^{\beta}(M,N) = \sum_{\pi:(1,1)\rightarrow (M,N)} e^{\beta\sum_{(i,j)\in \pi} w_{ij}},
$$
where the summation is over all paths $\pi$ (as in Figure \ref{fig4}) which start at $(1,1)$, and take unit steps in the $(1,0)$ or $(0,1)$ directions and terminate at $(M,N)$.

This satisfies a discrete SHE
$$
Z^{\beta}(M,N) = e^{\beta w_{M,N}} \big( Z^{\beta}(M-1,N) + Z^{\beta}(M,N-1)\big).
$$
The similarity to the SHE becomes more apparent if you make the change of variables $Z^{\beta}(M,N) = 2^{M+N}\tilde{Z}^{\beta}(M,N)$, $e^{\beta w_{i,j}}=1+\xi_{i,j}$ and iterate the recursion once more. The resulting equation can then be organized as
\begin{align*}
&\tilde{Z}^{\beta}(M,N)- \tilde{Z}^{\beta}(M-1,N-1) \\
&= \frac{1}{2}\Big(\tilde{Z}^{\beta}(M-2,N)-2\tilde{Z}^{\beta}(M-1,N-1)+\tilde{Z}^{\beta}(M,N-2)\Big)
\!+\!\textrm{multiplicative noise},
\end{align*}
where $\textrm{multiplicative noise}$ is the sum of products of $\xi$'s multiplied by single $\tilde{Z}^{\beta}$ terms. If $M+N$ is interpreted as time, and $M-N$ as space, this looks like a discrete SHE.

\begin{figure}[ht]
\begin{center}
\includegraphics[scale=.6]{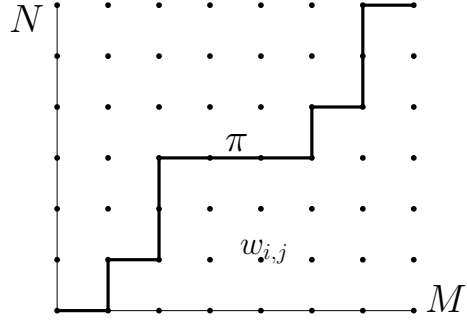}
\end{center}
\caption{Discrete polymer path $\pi$ moving through random potential of $w_{i,j}$.}\label{fig4}
\end{figure}

\begin{theorem} [\cite{AKQ}] Let $w_{i,j}$ have finite eighth moment. Then there exist explicit constants $C(t,x,N)$ (depending on the distribution of $w_{i,j}$) such that
$$
Z_N(t,x) := C(t,x,N)Z^{2^{-1/2}N^{-1/4}}(tN+x\sqrt{N}, tN - x\sqrt{N})
$$
converges as $N\rightarrow \infty$ to $z(2t,2x)$ where $z$ is the solution of the SHE with $\delta_{x=0}$ initial data.
\end{theorem}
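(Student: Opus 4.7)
The plan is to match a polynomial chaos expansion of $Z^\beta(M,N)$ in the noise variables $w_{ij}$ against the chaos series representation of $z(t,x)$ from Section \ref{L1}, and then show both term-by-term convergence and uniform $L^2$ tail estimates. First, I would set $\mu_\beta = \E[e^{\beta w}]$ and $\sigma_\beta^2 = \Var(e^{\beta w})$, define the i.i.d.\ standardized variables $\eta_{ij} = (e^{\beta w_{ij}} - \mu_\beta)/\sigma_\beta$, and expand $e^{\beta w_{ij}} = \mu_\beta + \sigma_\beta \eta_{ij}$ inside the product along each up/right path. Collecting terms of total degree $k$ in the $\eta$'s yields
$$
Z^{\beta}(M,N) \;=\; \mu_\beta^{M+N-1}\!\!\sum_{k=0}^{M+N-1}\! \bigl(\tfrac{\sigma_\beta}{\mu_\beta}\bigr)^{k}\!\!\sum_{v_1 \prec \cdots \prec v_k} N_\pi(v_1,\ldots,v_k)\,\prod_{\ell=1}^{k} \eta_{v_\ell},
$$
where the inner sum runs over ordered $k$-tuples of lattice sites in the partial order on $\Z_+^2$ and $N_\pi(v_1,\ldots,v_k)$ counts the number of paths from $(1,1)$ to $(M,N)$ that visit all of them. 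The constant $C(t,x,N)$ is then chosen to absorb $\mu_\beta^{-(M+N-1)}$ together with the combinatorial factor that turns the $k=0$ summand (the total number of paths) into the heat kernel value $p(2t,2x)$.

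Next, I would carry out the scaling analysis for each fixed $k$. A Taylor expansion (requiring only a few moments of $w$) gives $(\sigma_{\beta_N}/\mu_{\beta_N})^2 = \beta_N^2\,\Var(w)\,(1+o(1)) = \tfrac{1}{2}N^{-1/2}\Var(w)\,(1+o(1))$, so the effective noise per lattice site matches exactly the rescaling needed for a discrete $k$-fold stochastic sum on the $(1/N,\,1/\sqrt N)$ space-time lattice to limit to a $k$-fold multiple Wiener integral. At the same time, the local central limit theorem for the simple random walk gives, for $v_\ell = (s_\ell N + y_\ell\sqrt N,\,s_\ell N - y_\ell\sqrt N)$,
$$
\frac{N_\pi(v_1,\ldots,v_k)}{2^{M+N}} \;\approx\; p(2s_1,2y_1)\,p(2(s_2-s_1),2(y_2-y_1))\cdots p(2(t-s_k),2(x-y_k)),
$$
so a Riemann-sum identification shows that the rescaled $k$-th summand converges in distribution to $I_k(2t,2x)$ from the chaos series for $z(2t,2x)$.

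To upgrade this to $L^2$ convergence of the whole series I would exploit orthogonality: since the $\eta_{ij}$ are independent and mean zero, the $k$-th and $k'$-th summands of the expansion of $C(t,x,N)Z^{\beta_N}(M,N)$ are $L^2$-orthogonal for $k\neq k'$, and the $L^2$-norm squared of the $k$-th term reduces to
$$
\bigl(\tfrac{\sigma_\beta}{\mu_\beta}\bigr)^{2k}\sum_{v_1\prec\cdots\prec v_k} N_\pi(v_1,\ldots,v_k)^2,
$$
a Riemann approximation to $\int_{\Delta_k(2t)}\!\int_{\R^k} P_{k;2t,2x}^2(\vec s,\vec y)\,d\vec y\,d\vec s$ from Lemma \ref{lemIk}. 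Uniform summability of these $L^2$ norms in $N$ allows one to truncate the series, pass to the limit term by term, and conclude $L^2$ convergence of the rescaled partition function to $\sum_k I_k(2t,2x) = z(2t,2x)$. Tightness in $C([\tilde\delta,T])$ jointly in $(t,x)$ would then follow from the same moment estimates combined with standard two-parameter Kolmogorov-type criteria on the increments of the chaos expansion.

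The main obstacle is the term-by-term distributional convergence of the discrete multiple stochastic sums to multiple Wiener integrals. The local CLT comfortably supplies the heat-kernel factors and chaos orthogonality keeps the $L^2$ bookkeeping clean, but showing that $(\sigma_{\beta_N}/\mu_{\beta_N})^{k}\sum N_\pi(v_1,\ldots,v_k)\prod \eta_{v_\ell}$ converges jointly across $k$ to an iterated It\^o integral requires a multilinear invariance principle of Lindeberg or fourth-moment type, whose contraction hypotheses are precisely why the theorem assumes finite eighth moments of $w_{ij}$: these higher moments give adequate control on diagonal and near-diagonal contributions in the discrete kernels. A secondary subtlety is the explicit identification of $C(t,x,N)$, which requires tracking $\mu_{\beta_N}^{M+N-1}$ via a careful Taylor expansion so that the first few cumulants of $w$ (not merely its variance) do not produce accumulating corrections over the length-$2tN$ polymer.
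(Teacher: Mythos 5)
The paper itself offers no proof of this theorem; it is stated and cited from \cite{AKQ}, and the text immediately moves on to the semi-discrete setting. So there is no in-paper proof of this exact statement to compare against. That said, your proposal is a faithful summary of the actual AKQ strategy: expand $Z^{\beta}(M,N)$ as a polynomial chaos in the centered, normalized weights $\eta_{ij}$, recognize the degree-$k$ coefficient kernel (path counts through a chain of $k$ sites) as a discretization of the $k$-fold heat-kernel transition density via the local CLT, exploit $L^2$-orthogonality of the $\eta$-monomials to compute and uniformly bound the $L^2$-norms of the chaos terms, and then invoke a multilinear Lindeberg-type invariance principle to upgrade the termwise convergence of discrete multiple stochastic sums (with non-Gaussian $\eta$'s) to multiple Wiener integrals. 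Your explanation that the eighth-moment hypothesis controls the Lindeberg swapping error (diagonal and near-diagonal contributions) is also the right reason.

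It is instructive to contrast this with the nearest proof the paper \emph{does} give, namely Theorem~\ref{Thmodified} for the modified semi-discrete polymer. There the driving noise consists of Brownian increments, which are already Gaussian, so the paper avoids any invariance principle altogether: it builds an explicit coupling $B^N_j(s_1,s_2)=N^{3/4}\int\!\!\int \xi$ of all the $B^N_j$ with the single white noise $\xi$, after which term-by-term convergence of the chaos series is a purely deterministic $L^2$ estimate on the kernels $\rho^N_k$ (Lemmas~\ref{lemma1}--\ref{lemma2}), giving genuine $L^2(\Omega,\mathcal F,\PP)$ convergence. For arbitrary $w_{ij}$ no such coupling to $\xi$ exists, and your appeal to a multilinear CLT is exactly the extra step required; in return, the conclusion one gets is convergence in distribution, not in $L^2(\Omega,\mathcal F,\PP)$. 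This is the one small slip in your write-up: after establishing termwise \emph{distributional} convergence and uniform $L^2$ bounds for tail truncation, you assert ``$L^2$ convergence of the rescaled partition function to $z(2t,2x)$,'' which is only available under a coupling. In the AKQ setting the $L^2$ estimates serve solely to control the chaos tails uniformly in $N$ so that a three-$\ep$ argument upgrades termwise weak convergence to weak convergence of the full series. (Also, for the exponent bookkeeping in $\mu_{\beta_N}^{M+N-1}$ to close, one should take $w$ centered with unit variance, or absorb mean and variance into $C(t,x,N)$ and the scaling of $\beta_N$; otherwise the drift accumulates over the length-$2tN$ polymer.)
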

Here $N$ is playing the role of $\ep^{-2}$ from before, and this is readily recognized as a weak noise scaling limit.

In the next section we will consider a semi-discrete polymer in which $M$ has been taken large and the discrete sums of $w_{i,j}$ become Brownian increments. For a modification of this semi-discrete polymer model we will give a proof of an analogous weak noise scaling limit.

\section{Weak noise convergence of polymers to SHE}\label{L4}
The O'Connell--Yor partition function (also called the semi--discrete polymer or semi-discrete SHE) 
\index{Semi-discrete stochastic heat equation (SHE)}%
\index{Stochastic heat equation (SHE)!semi-discrete}%
at inverse temperature $\beta>0$ is defined by
$$
Z^{\beta}(T,N) := \int\displaylimits_{0\leq s_1<s_2<\cdots<s_{N-1}<T} d\vec{s} e^{ \beta\big(B^N_1(0,s_1) + B^N_2(s_1,s_2) + \ldots B^N_N(s_{N-1},T)\big)},
$$
where $B^N_i(a,b)=B^N_i(b)-B^N_i(a)$ with $\{B^N_i\}_{i=1}^{N}$ independent Brownian motions.

\begin{figure}[ht]
\begin{center}
\includegraphics[scale=.6]{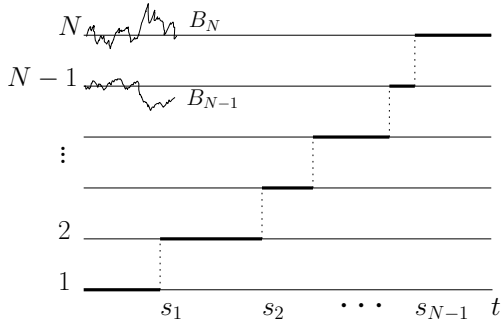}
\end{center}
\caption{O'Connell--Yor polymer partition function.}
\end{figure}

\begin{theorem}[\cite{QRMF} (see also \cite{Mihai}]\label{Thmoy}
There exist explicit constants $C(t,x,N)$ such that
$$
Z_N(t,x) := C(t,x,N)Z^{N^{-1/4}}(tN+x\sqrt{N}, tN)
$$
converges as $N\rightarrow \infty$ to $z(t,x)/p(t,x)$ where $z$ is the solution of the SHE with $\delta_{x=0}$ initial data and $p(t,x)$ is the heat kernel. Moreover, there exists a coupling of the Brownian motions $B^N_j$ with $\xi$ so the convergence occurs in $L^2(\Omega,\mathcal{F},\mathbb{P})$ as well.
\end{theorem}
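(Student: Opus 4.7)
The plan is to expand $Z^\beta(T,N)$ as a chaos series with respect to the $N$ Brownian motions and match it term-by-term with the chaos series for $z(t,x)/p(t,x)$ under an explicit coupling. First, applying the Wick/Wiener exponential identity to each factor $e^{\beta B_i^N(s_{i-1},s_i)}$ and integrating against the polymer simplex $\Delta_{N-1}(T)$ yields the decomposition
$$Z^\beta(T,N) = e^{\beta^2 T/2}\sum_{k\ge 0}\beta^k J_k^{N,T},$$
where $J_k^{N,T}$ is a $k$-fold iterated stochastic integral against the Brownian motions, summed over which strands carry the $k$ integration points and weighted by the volume of the compatible sub-simplex of $\Delta_{N-1}(T)$.

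Next I couple the Brownian motions to the space-time white noise $\xi$ driving the limiting SHE, by letting each $B_i^N(s)$ be $N^{1/4}$ times the integral of $\xi$ over $[0,s]$ against a vertical strip of width $1/\sqrt{N}$ centered on the appropriately rescaled strand position. This yields independent unit-diffusivity Brownian motions. Under the coupling, $J_k^{N,T}$ becomes a $k$-fold multiple stochastic integral against $\xi$ with a piecewise-constant kernel $K_k^{N,T,x}(\vec s,\vec y)$ supported on a space-time lattice. The limit $z(t,x)/p(t,x) = \sum_k I_k(t,x)/p(t,x)$ is itself a white-noise chaos series, so it is enough to prove that $C(t,x,N)\,e^{\beta^2 T/2}\beta^k K_k^{N,T,x}$ converges to $P_{k;t,x}/p(t,x)$ in $L^2(\Delta_k(t)\times\R^k)$ for each $k$.

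The constant $C(t,x,N)$ is pinned down by the $k=0$ (deterministic) term: since $J_0^{N,T}=T^{N-1}/(N-1)!$ and the zeroth chaos of $z/p$ equals $1$, one is forced to take $C(t,x,N) = (N-1)!\,e^{-\beta^2 T/2}/T^{N-1}$ up to a $(1+o(1))$ correction absorbed into the closed form. Stirling's formula at $T=tN+x\sqrt{N}$ and $\beta=N^{-1/4}$ (noting $\beta^2T/2=\tfrac12(t\sqrt N+x)$) produces the advertised explicit expression. The same Stirling expansion applied to the volumes of the compatible sub-simplices converts $C(t,x,N)\,e^{\beta^2 T/2}\beta^k K_k^{N,T,x}$ into a Riemann-sum approximation to the ordered heat-kernel product $P_{k;t,x}/p(t,x)$; this is essentially a local central limit theorem for the positions of the $N-1$ ordered polymer times conditioned on the endpoint.

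The main obstacle is upgrading this term-by-term convergence to $L^2$ convergence of the full series. What is needed is a uniform-in-$N$ bound $\|C(t,x,N)\,e^{\beta^2 T/2}\beta^k J_k^{N,T}\|_{L^2}\le a_k$ with $\sum_k a_k^2<\infty$, i.e.\ a discrete analog of Lemma~\ref{lemIk}. Such a bound should follow from dominating the discrete kernel by the continuum heat-kernel product times a slowly-growing factor in $k$; the combinatorial expressions $(u^{i-1}(T-u)^{N-i})/((i-1)!(N-i)!)$ arising from simplex volumes can be controlled using the binomial/Beta identity and Stirling uniformly in the location of the marked points. Once this dominating bound and the termwise convergence are both in place, dominated convergence on $\bigoplus_k L^2(\Delta_k(t)\times\R^k)$ promotes them to $L^2$ convergence of $Z_N(t,x)$ to $z(t,x)/p(t,x)$, completing the proof.
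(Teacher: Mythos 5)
The paper does not actually prove Theorem~\ref{Thmoy}; it cites \cite{QRMF} and proves only Theorem~\ref{Thmodified}, which replaces each factor $e^{\beta B_j(s_{j-1},s_j)}$ by the linearization $1+\beta B_j(s_{j-1},s_j)$. Your proposal follows the same strategy the paper uses for that modified theorem (semi-discrete chaos expansion, the $\varphi^N$-coupling of the $B^N_j$ to $\xi$, termwise $L^2$ convergence of kernels, then a summable dominating bound to pass to the full series), but applies it directly to the exponential polymer, and your identification of $C(t,x,N)=(N-1)!\,e^{-\beta^2T/2}/T^{N-1}$ from the zeroth chaos is right.

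The genuine gap is that you have not isolated the additional phenomenon that distinguishes $Z^\beta$ from $\tilde Z^\beta$. Wick-expanding $e^{\beta B_j(s_{j-1},s_j)}=e^{\beta^2(s_j-s_{j-1})/2}\sum_{m\ge 0}\frac{\beta^m}{m!}{:}B_j(s_{j-1},s_j)^m{:}$ and multiplying over $j$ shows that the degree-$k$ chaos of $Z^\beta$ receives contributions not only from $k$ noise insertions placed on $k$ distinct strands (the only thing that happens for $\tilde Z^\beta$, and the source of the sum over $D^N_k$ in the paper), but also from repeated strands, i.e.\ ${:}B_j^m{:}$ with $m\ge 2$. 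As a result the chaos series for $Z^\beta$ does not terminate at level $N$, and the kernel $K_k^{N,T,x}$ you describe has an extra piece with no counterpart in the modified case. Your dominating bound $a_k$ therefore cannot simply be ``a discrete analog of Lemma~\ref{lemIk}'': it must in addition show that the aggregate of the repeated-strand contributions is $o(1)$ in $L^2$ and summable in $k$ uniformly in $N$. This estimate is what the $\partial D_k^N$ computation \eqref{ENK} superficially resembles, but there the offending term is an artifact of re-indexing a clean sum, whereas here the repeated-strand terms are intrinsic to the integrand, and controlling them (using the $N^{-1/2}$ strip width and the small measure of the near-diagonal in the simplex, or by comparing $Z^\beta$ to $e^{\beta^2T/2}|\Delta_{N-1}(T)|\,\tilde Z^\beta$ and reducing to Theorem~\ref{Thmodified}) is precisely the substantive step the paper declines to carry out. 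Stirling on the simplex volumes will not produce this on its own, since it says nothing about where the insertion points fall relative to the strands.
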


This theorem is proved by developing $Z^{\beta}(T,N)$ into a semi-discrete chaos series. \index{Chaos series}%
Let us, therefore, recall the continuous chaos series for the solution to the SHE with delta initial data:
$$
\frac{z(1,0)}{p(1,0)} = \sum_{k=0}^{\infty} \tilde{I}_k(1,0)
$$
where, due to the division by $p(1,0)$ we have slightly modified chaos terms
$$
\tilde{I}_k(1,0) = \int_{\Delta_k} d\vec{s} \int_{\R^k} d\vec{x} \rho_k(\vec{s},\vec{x}) \xi^{\otimes k}(\vec{s},\vec{x}).
$$
Here we have again used the notation
$$
\Delta_k(t) = \{0\leq s_1 < \cdots < s_k < t\}, \qquad \textrm{with} \quad \Delta_k(1) =: \Delta_k
$$
and
$$
\rho_k(\vec{s},\vec{x}) = \frac{p_{k;1,0}(\vec{s},\vec{x})}{p(1,0)} = \frac{p(1-s_k,x_k)p(s_k-s_{k-1},x_k-x_{k-1})\cdots p(s_1,x_1)}{p(1,0)}.
$$
Note that $\rho_k(\vec{s},\vec{x})$ represents the transition density of a Brownian bridge from $(0,0)$ to $(1,0)$ going through the points $\vec{x}$ at times $\vec{s}$. We will not prove Theorem \ref{Thmoy}, but instead will prove a result about a slightly modified polymer partition function, which is itself a key step in proving Theorem \ref{Thmoy}.

\begin{definition}
The modified semi-discrete polymer partition function is defined as
$$
\tilde{Z}^{\beta}(T,N) = \left| \Delta_{N-1}(T) \right|^{-1} \int_{\Delta_{N-1}(T)}  d\vec{s} \prod_{j=1}^N \big(1 + \beta B_j(s_{j-1},s_j)\big),
$$
where by convention $s_0=0$ and $s_N=T$.
\end{definition}

Note that for a measurable set $A\in \R^n$, we use $|A|$ represent the Lebesgue measure of the set. Also, observe that for small $\beta$ we have the first order expansion $e^{\beta B_j(s_{j-1},s_j)} \approx 1 + \beta B_j(s_{j-1},s_j)$.

\begin{theorem}[\cite{QRMF}]\label{Thmodified}
As $N\rightarrow \infty$, we have the following weak convergence
$$
\tilde{Z}^{N^{-1/4}}(N,N) \rightarrow \frac{z(1,0)}{p(1,0)}.
$$
Moreover, there exists a coupling of the $B_j^N$ with $\xi$ so that this convergence occurs in $L^2(\Omega,\mathcal{F},\mathbb{P})$.
\end{theorem}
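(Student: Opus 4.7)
The strategy is to expand the product in $\tilde Z^\beta(N,N)$ into a discrete analogue of the continuous chaos series for $z(1,0)/p(1,0)$, and match each discrete term with its continuous counterpart $\tilde I_k(1,0)$ under a suitable coupling, with a uniform-in-$N$ tail bound. Binomial expansion gives
$$\tilde Z^\beta(N,N) = \sum_{k=0}^N T_k^N, \quad T_k^N := |\Delta_{N-1}(N)|^{-1} \beta^k \int_{\Delta_{N-1}(N)} d\vec s \sum_{1\le j_1<\cdots<j_k\le N} \prod_{\ell=1}^k B_{j_\ell}(s_{j_\ell-1},s_{j_\ell}),$$
and I would show $T_k^N \to \tilde I_k(1,0)$ in $L^2$ for each $k$, together with a uniform-in-$N$ bound $\E[(T_k^N)^2] \le C^k/\Gamma(k/2)$ summable in $k$, so that dominated convergence in $\ell^2(\N;L^2(\Omega))$ gives $\tilde Z^{N^{-1/4}}(N,N) \to \sum_k \tilde I_k(1,0) = z(1,0)/p(1,0)$.

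The coupling partitions space-time into thin strips of width $N^{-1/2}$ following the expected drift of the $j$-th Brownian label: $I_j^N(s):=\{y:|(j-Ns)-\sqrt N\,y|\le 1/2\}$, with $B_j^N(t):=N^{3/4}\int_0^{t/N}\!\int_{I_j^N(s)} \xi(s,y)\,dy\,ds$. These are independent standard Brownian motions, and since $\{I_j^N(s)\}_j$ partitions $\R$ for every $s$, products over distinct $j_\ell$ become multiple stochastic integrals of $\xi^{\otimes k}$ against deterministic indicator kernels. After rescaling $s=Nu$ and swapping the order of integration one identifies
$$T_k^N = \int_{\Delta_k \times \R^k} K_k^N(\vec t,\vec y)\,\xi^{\otimes k}(d\vec t,d\vec y), \quad K_k^N(\vec t,\vec y) = N^{k/2}\,\P\!\big(U_{(j_\ell-1)}\le t_\ell\le U_{(j_\ell)}\ \forall\,\ell\big),$$
with $(U_{(i)})$ the order statistics of $N-1$ i.i.d.\ uniforms on $[0,1]$ and $\vec j=\vec j(\vec t,\vec y)$ the strip assignment. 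This probability is the multinomial probability with cells $[0,t_1],(t_1,t_2],\ldots,(t_k,1]$ and counts $j_1-1,\,j_2-j_1,\,\ldots,\,N-j_k$.

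A local central limit theorem (or Stirling applied to the explicit multinomial formula) yields pointwise convergence $K_k^N\to\rho_k$; the coupling was designed precisely so that $y_\ell$ aligns with the Gaussian fluctuation $(j_\ell-Nt_\ell)/\sqrt N$, causing the Gaussian factors produced by the LCLT to reassemble into the Brownian-bridge density $\rho_k$ from $(0,0)$ to $(1,0)$. For the uniform $L^2$ estimate, the It\^o isometry gives $\E[(T_k^N)^2]=\|K_k^N\|_{L^2(\Delta_k\times\R^k)}^2$, and a direct computation paralleling Lemma \ref{lemIk} --- using a discrete analogue of Exercise \ref{exint} to integrate out the spatial variable and then a Dirichlet identity on $\Delta_k$ --- gives the claimed bound uniformly in $N$. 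The main obstacle is making the LCLT quantitative enough to upgrade pointwise convergence of the kernels into $L^2(\Delta_k\times\R^k)$ convergence (and hence $L^2(\Omega)$ convergence of the chaos terms, not just convergence of their moments); this requires Gaussian local CLT estimates for the multinomial with uniform remainder controlled over the full range of $\vec j$, rather than only near the mean, so that the uniform upper bound on $\|K_k^N\|_{L^2}$ dominates the difference $K_k^N-\rho_k$ in $L^2$.
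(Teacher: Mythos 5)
Your proposal follows essentially the same route as the paper's proof: the same strip coupling $B_j^N(t)=N^{3/4}\int_0^{t/N}\int_{I_j^N(s)}\xi\,dy\,ds$; the same chaos-term-by-chaos-term reduction to a deterministic kernel given by a multinomial/order-statistics probability (the paper phrases this via a Poisson jump process conditioned on $X_T=N$, which Exercise 4.5 notes is the same thing as uniform order statistics); and the same two key estimates — a uniform-in-$N$ $L^2$ bound on the discrete kernels and $L^2$ convergence of kernels $\rho_k^N\to\rho_k$ — which are precisely the paper's Lemmas \ref{lemma1} and \ref{lemma2} and which the paper also states without proof. The only cosmetic difference is that the paper explicitly splits off the contribution from indices in $\partial D_k^N$ as an error term $E_k^N$, whereas you bake that restriction into the definition of $K_k^N$; either way the estimate required is the same.
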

Note that we have included superscript $N$ on the $B^N_j$ since, though for each $N$ $\big\{B^N_j\big\}_{j=1}^{N}$ are marginally independent Brownian motions, as $N$ varies these collections of curves are not independent and, in fact, are built from the same probability space on which $\xi$ is defined. The $L^2(\Omega,\mathcal{F},\mathbb{P})$ convergence certainly implies the weak convergence.

\begin{proof}[Proof of Theorem \ref{Thmodified}]
We proceed in three steps. In Step (1) we describe the coupling of the $B^N_j$ with $\xi$. In Step (2) we express $\tilde{Z}^{\beta}(N,T)$ as a semi--discrete chaos series and then using the coupling to express this as a continuous chaos series in $\xi$. Finally, in Step (3) we use convergence of the Poisson jump process to Brownian bridge to conclude termwise convergence of the chaos series, and the convergence of the entire chaos series too.

\noindent{\bf Step (1):} Define the function
$$
\varphi^N(s,x) = \left( \frac{s}{N}, \frac{x-s}{\sqrt{N}} \right).
$$
This maps the square $[0,N]^2$ to the rhombus $\{ (t,y): 0\leq t\leq 1, -t\sqrt{N} \leq y \leq (1-t)\sqrt{N} \}$. Call
$$
I_j^N(t) = \left[ \frac{j-1}{\sqrt{N}} - \sqrt{N}t, \frac{j}{\sqrt{N}} - \sqrt{N}t  \right].
$$

\begin{figure}[ht]
\begin{center}
\includegraphics[scale=.69]{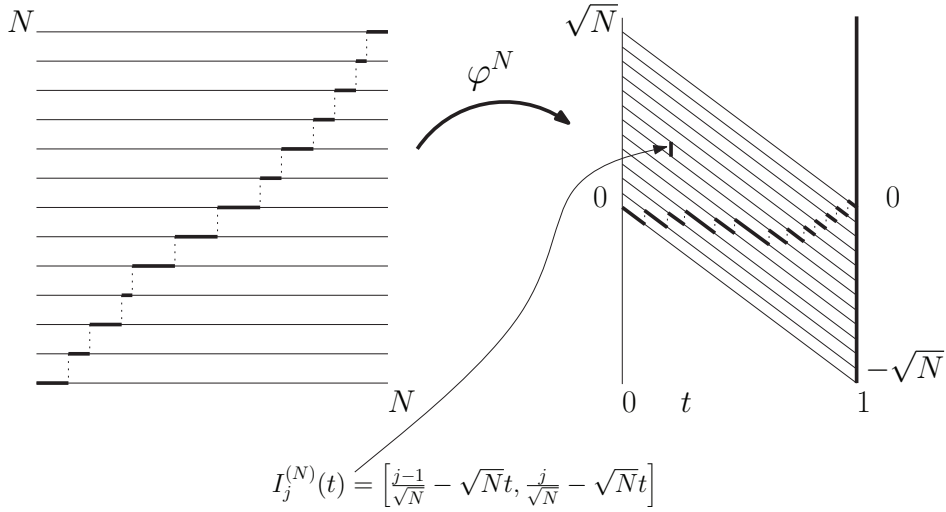}
\end{center}
\caption{The mapping  $\varphi^N$ used to build the coupling of the $B^N_j$ with $\xi$.}
\end{figure}

\begin{exercise}
Show that since $\varphi^N$ has Jacobian $N^{-3/2}$, if we define
$$
B_j^N(s_1,s_2) = N^{3/4} \int_{s_1/N}^{s_2/N} ds \int_{I_j^N(s)} dx \xi(s,x),
$$
then
$ \{B_j^N(0,s)\}_{j\in [N], s\in [0,N]}$ are independent standard Brownian motions.
\end{exercise}
We will use this coupling.

\noindent{\bf Step (2):} Let us rewrite $\tilde{Z}^{\beta}(T,N)$ via an expectation.  Let $X_{\bullet}$ be distributed as the trajectory on $[0,T]$ of a Poisson jump process with $X_0=1$ (i.e. starting at 1 and increasing by 1 after independent exponential waiting times).

\begin{exercise}
Show that conditioning $X_{\bullet}$ on the event that $X_T=N$, the jump times of $X_{\bullet}$ (labeled $s_1,\ldots,s_{N-1}$) are uniform over the simplex $0<s_1<\ldots<s_{N-1}<T$ with measure
$$
\frac{d\vec{s}}{\left| \Delta_{N-1}(T) \right|} \mathbf{1}_{\vec{x}\in \Delta_{N-1}(T)}.
$$
\end{exercise}

We may rewrite
\begin{align*}
\tilde{Z}^{\beta}(T,N) &= \E\left[ \prod_{j=1}^N \Big(1 + \beta \int_0^T \mathbf{1}_{X_s=j} dB_j(s) \Big)\, \Big\vert X_T=N \right],\\
&= \sum_{k=0}^N \beta^k \tilde{J}_k(T,N) \\
\tilde{J}_k(T,N) &= \E\left[ \int_{[0,T]^k} \sum_{\vec{i} \in D_k^N} \prod_{j=1}^k \mathbf{1}_{X_{s_j}=i_j} dB
_{i_j}(s_j) \vert X_T=N \right],
\end{align*}
where
$$
D_k^N = \{i \in \Z_+^k: 1\leq i_1 < i_2 < \cdots < i_k \leq N\}.
$$
and (for use later)
$$
\partial D_k^N = \{i \in \Z_+^k: 1 \leq i_1 \leq i_2 \leq \cdots \leq i_k \leq N \text{ with at least one equality}\}.
$$
The expectation can be taken inside yielding
$$
\tilde{J}_k(T,N) = \int_{[0,T]^k} \sum_{\vec{i} \in D_k^N} \mathbb{P}(X_{s_1}=i_1,\ldots,X_{s_k}=i_k \vert X_T=N) dB_{i_1}(s_1)\cdots dB_{i_k}(s_k).
$$
We would like to use the coupling now, so it makes sense to change our probabilities into the variables of the image of $\varphi^N$. Call
$$
\rho^N(s,t,x,y) = \mathbb{P}\left(X_{N(t-s)} = \lceil \sqrt{N}y + Nt \rceil - \lceil \sqrt{N}x + Ns \rceil \right)
$$
and
$$
\rho_k^N(\vec{s},\vec{x}) = \frac{N^{k/2}\rho^N(s_k,1;x_k,0)\prod_{j=1}^k \rho^N(s_{j-1},s_j,x_{j-1},x_j)}{\rho^N(0,1;0,0)},
$$
where $\vec{s} \in \Delta_k$ and $\vec{x}\in \R^k$. This is the semi--discrete analog of $\rho_k(\vec{s},\vec{x})$. The scaling $N^{k/2}$ comes from local limit theorems. For $f\in L^2(\Delta_k\times \R^k)$ define
$$
\Vert f\Vert_2^2 = \int_{\Delta_k} \int_{\R^k} f(\vec{s},\vec{x})^2 d\vec{s}d\vec{x}.
$$

Using our coupling and the formula for $\rho_k^N$ we can rewrite
$$
\tilde{J}_k(N,N) = N^{k/4} \int_{[0,1]^k} d\vec{s} \sum_{\vec{i} \in D_k^N} \prod_{j=1}^k \int_{I_{i_j}^N(s_j)} dx_j \rho_k^N(\vec{s},\vec{x}) \xi^{\otimes k}(\vec{s},\vec{x}).
$$
The $N^{1/4}$ is the product of $N^{-1/2}$ from the definition of $\rho_k^N$ and $N^{3/4}$ from the coupling.

\noindent{\bf Step (3):} Let us start by stating two lemma (we will not prove them here, though their proofs are not so hard).
\begin{lemma}\label{lemma1}
There exists a constant $c\in (0,\infty)$ such that for all $k,N\geq 1$,
$
\Vert \rho_k^N \Vert^2_2 \leq c^k \Vert \rho_k\Vert_2^2.
$
\end{lemma}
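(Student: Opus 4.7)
My plan uses two quantitative facts about the semi--discrete transition kernel $\rho^N$, combined with the same Dirichlet integral that appears in the proof of Lemma~\ref{lemIk}. The first fact is a Chapman--Kolmogorov identity: since $\rho^N(s,t;x,y) = \PP(X_{N(t-s)} = n)$ with $n$ depending on $y-x$ linearly through the ceiling, the usual Poisson convolution rewritten in the continuous $z$--variable yields
\[
\int_{\R} \rho^N(s,u;x,z)\,\rho^N(u,t;z,y)\,dz \;=\; \frac{1}{\sqrt{N}}\,\rho^N(s,t;x,y).
\]
The second is a pair of Stirling estimates: $\sup_n \PP(X_\lambda = n) \leq C/\sqrt{\lambda\vee 1}$ and $\PP(X_N = N)\geq c_0/\sqrt{N}$. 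In terms of $\rho^N$ these become
\[
\sup_{y\in\R}\rho^N(s,t;x,y) \leq \frac{C}{\sqrt{N(t-s)\vee 1}}, \qquad \rho^N(0,1;0,0)\geq \frac{c_0}{\sqrt{N}}.
\]

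\textbf{Main step.} I first estimate $\int_{\R^k}\rho_k^N(\vec s,\vec x)^2\,d\vec x$ pointwise in $\vec s$, then integrate $\vec s$ out via Dirichlet. Write $u_j = s_{j+1}-s_j$ with $s_0=0$, $s_{k+1}=1$, and $x_0=x_{k+1}=0$. Squaring the definition of $\rho_k^N$ gives a product of $k+1$ terms $[\rho^N(s_j,s_{j+1};x_j,x_{j+1})]^2$. Apply the elementary bound $[\rho^N]^2 \leq \|\rho^N(s,t;x,\cdot)\|_\infty\,\rho^N$ to each factor, then use Chapman--Kolmogorov $k$ times to collapse the remaining $\rho^N$'s into $\rho^N(0,1;0,0)/N^{k/2}$:
\[
\int_{\R^k}\rho_k^N(\vec s,\vec x)^2\,d\vec x \;\leq\; \frac{N^k}{\rho^N(0,1;0,0)^2}\,\cdot\,\prod_{j=0}^{k}\frac{C}{\sqrt{Nu_j\vee 1}}\,\cdot\,\frac{\rho^N(0,1;0,0)}{N^{k/2}} \;\leq\; \frac{C^{k+1}}{c_0}\prod_{j=0}^{k}\frac{1}{\sqrt{u_j}},
\]
where the last step uses the lower bound on $\rho^N(0,1;0,0)$ together with the elementary inequality $\sqrt{N}/\sqrt{Nu_j\vee 1}\leq 1/\sqrt{u_j}$ (true for both cases $Nu_j\geq 1$ and $Nu_j<1$). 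Observe that every explicit power of $N$ cancels exactly.

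\textbf{Completion.} Integrating over $\vec s\in\Delta_k$ using the Dirichlet identity stated in the exercise preceding Lemma~\ref{lemIk} gives
\[
\|\rho_k^N\|_2^2 \;\leq\; \frac{C^{k+1}}{c_0}\cdot \frac{\Gamma(1/2)^{k+1}}{\Gamma((k+1)/2)}.
\]
On the continuum side, the identity $\rho_k = P_{k;1,0}/p(1,0)$ together with Lemma~\ref{lemIk} at $(t,x)=(1,0)$ expresses $\|\rho_k\|_2^2$ as a constant times $1/\Gamma(\cdot)$ with the same $k$--dependent $\Gamma$ structure (up to a shift by $1$ in the argument). Taking the ratio, the $\Gamma$-factors either cancel or leave a quantity bounded in $k$ (by Stirling, ratios like $\Gamma(k/2)/\Gamma((k+1)/2)$ are $O(1)$), and one is left with a purely exponential bound $\|\rho_k^N\|_2^2 \leq c^k\|\rho_k\|_2^2$.

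\textbf{Main obstacle.} The delicate piece is the Poisson sup bound together with its compatibility with small time-increments $u_j < 1/N$, where the local CLT fails and one must absorb the regime via the ``$\vee 1$'' in $C/\sqrt{\lambda\vee 1}$. Past that point the proof is bookkeeping: the explicit $N^{k/2}$ in the definition of $\rho_k^N$, the $k$ factors of $1/\sqrt{N}$ coming from iterated semi--discrete Chapman--Kolmogorov, the $\sqrt{N}/c_0$ from $\rho^N(0,1;0,0)^{-1}$, and the $k+1$ sup bounds must all line up so that no $N$ survives into the final $\vec s$-pointwise estimate---and the calculation above shows this happens precisely.
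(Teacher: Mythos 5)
The paper explicitly declines to prove Lemma~\ref{lemma1} (``we will not prove them here, though their proofs are not so hard''), so there is no in-text argument to compare yours against. On its own merits your proof is correct, and it is the natural argument.

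The three ingredients all check out. The semi-discrete Chapman--Kolmogorov with the extra $N^{-1/2}$ holds exactly: with $m(z)=\lceil\sqrt{N}z+Nu\rceil$, the $z$-integral is $N^{-1/2}$ times the integer sum because each integer level set has Lebesgue measure $N^{-1/2}$, and the integer sum collapses by independence of Poisson increments. The sup bound $\sup_n\PP(X_\lambda=n)\leq C/\sqrt{\lambda\vee1}$ follows from the local CLT for $\lambda\geq1$ and trivially for $\lambda<1$, and it is uniform in $x$ since varying $y$ sweeps the full range of the Poisson argument. The normalization $\rho^N(0,1;0,0)=\PP(X_N=N)\geq c_0/\sqrt N$ is Stirling. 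The bookkeeping of powers of $N$ -- $N^{k}$ from the definition of $\rho_k^N$, $N^{-k/2}$ from iterated Chapman--Kolmogorov, $N^{1/2}$ from the normalization lower bound, and $N^{-(k+1)/2}$ absorbed into the $k+1$ sup bounds -- indeed cancels exactly via $\sqrt{N}/\sqrt{Nu_j\vee1}\leq u_j^{-1/2}$, leaving the same Dirichlet integral $\Gamma(1/2)^{k+1}/\Gamma((k+1)/2)$ that governs $\|\rho_k\|_2^2$. One small caveat about your last step: the formula stated in Lemma~\ref{lemIk} has what appears to be a typographical slip -- a direct computation (or the $k=1$ case $\int_0^1 ds/\sqrt{s(1-s)}=\pi$) shows it should read $\Gamma(1/2)^{k+1}/\Gamma((k+1)/2)$ rather than $\Gamma(1/2)^k/\Gamma(k/2)$. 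If you use the corrected formula, your bound on $\|\rho_k^N\|_2^2$ matches $\|\rho_k\|_2^2$ up to a pure geometric factor $C^{k+1}(4\pi)^{k/2}/c_0$, so no ``bounded Gamma ratio'' hand-waving is even needed; if one insists on using the as-printed formula, the extra factor $\Gamma(k/2)/\Gamma((k+1)/2)$ is $O(k^{-1/2})$ and only helps. Either way the conclusion $\|\rho_k^N\|_2^2\leq c^k\|\rho_k\|_2^2$ follows, and as a bonus your explicit bound also delivers the first assertion of Lemma~\ref{lemma2} immediately.
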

\begin{lemma}\label{lemma2}
There exists a constant  $c\in (0,\infty)$  such that for all $N\geq 1$ we have
$
\sum_{k=1}^N \Vert \rho_k^N\Vert_2^2 \leq C
$
and for each $k\geq 1$,
$\lim_{N\rightarrow\infty} \Vert \rho_k^N - \rho_k \Vert_2^2 = 0.$
\end{lemma}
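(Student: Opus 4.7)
The plan is to prove the two assertions separately, leveraging Lemma \ref{lemma1} for the uniform bound and a local central limit theorem for the Poisson process for the termwise convergence.

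For the uniform summability, the idea is to combine Lemma \ref{lemma1} with the explicit computation in Lemma \ref{lemIk}. Since $\rho_k(\vec{s},\vec{x})$ is just the Brownian-bridge version of the kernel $P_{k;1,0}(\vec{s},\vec{x})/p(1,0)$, the same chain of integrations used in Lemma \ref{lemIk} gives
$$
\Vert \rho_k\Vert_2^2 = \frac{1}{(4\pi)^{k/2}}\,\frac{\Gamma(1/2)^k}{\Gamma(k/2)}.
$$
By Lemma \ref{lemma1}, $\Vert \rho_k^N\Vert_2^2 \leq c^k\Vert \rho_k\Vert_2^2$, so
$$
\sum_{k=1}^{N}\Vert \rho_k^N\Vert_2^2 \leq \sum_{k=1}^{\infty} \frac{c^k\,\Gamma(1/2)^k}{(4\pi)^{k/2}\,\Gamma(k/2)} < \infty,
$$
where the series converges because $\Gamma(k/2)$ grows super-exponentially. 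This gives the desired constant $C$, independent of $N$.

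For the termwise convergence, I would first establish pointwise convergence $\rho_k^N(\vec{s},\vec{x}) \to \rho_k(\vec{s},\vec{x})$ on the open set where the times $s_j$ are strictly increasing and strictly contained in $(0,1)$. This is the local central limit theorem for the Poisson process: if $X_m$ is Poisson with mean $m$, then for each $z\in\R$,
$$
\sqrt{m}\,\PP\bigl(X_m = \lceil m + z\sqrt{m}\rceil\bigr) \longrightarrow \frac{1}{\sqrt{2\pi}}\,e^{-z^2/2}
$$
as $m\to\infty$. Applied to each factor $\rho^N(s_{j-1},s_j,x_{j-1},x_j)$ with $m = N(s_j-s_{j-1})$ and $z = (x_j-x_{j-1})/\sqrt{s_j-s_{j-1}}$, one obtains $\sqrt{N}\,\rho^N(s_{j-1},s_j,x_{j-1},x_j) \to p(s_j-s_{j-1},x_j-x_{j-1})$, and an identical statement for the boundary factors and the normalizer $\rho^N(0,1;0,0)$. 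Multiplying these together, the $N^{k/2}$ prefactor together with the $(k+1)$-fold $\sqrt{N}$ gains in the numerator and the single $\sqrt{N}$ loss from the denominator combine correctly to produce $\rho_k(\vec{s},\vec{x})$.

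To upgrade pointwise convergence to $L^2(\Delta_k\times \R^k)$ convergence, I would invoke a dominated convergence argument: Lemma \ref{lemma1} provides a uniform-in-$N$ pointwise majorant of the form $\rho_k^N(\vec{s},\vec{x})^2 \leq c^k\,\widetilde{\rho}_k(\vec{s},\vec{x})^2$ for an integrable comparison function $\widetilde{\rho}_k$ (a Gaussian-type kernel built from $\rho_k$), after which standard dominated convergence gives $\Vert \rho_k^N - \rho_k\Vert_2^2 \to 0$. The main technical obstacle is verifying that the local CLT convergence above can be made uniform enough on the interior of $\Delta_k\times\R^k$ to feed into dominated convergence: the densities $\rho_k$ have integrable singularities near the boundary of the simplex (whenever $s_j - s_{j-1}\downarrow 0$), and the Poisson-process probabilities exhibit discreteness artifacts of size $1/\sqrt{N}$ near these boundaries; controlling both simultaneously is the heart of the estimate, but once we have the Gaussian-type domination coming from Lemma \ref{lemma1}, the contribution from any small neighborhood of the bad set is uniformly small and convergence on the good complement follows from the local CLT.
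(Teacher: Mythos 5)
The paper does not prove this lemma: immediately before stating it, the text says ``we will not prove them here, though their proofs are not so hard.'' So there is no reference proof to compare against, and I will evaluate your argument on its own terms.

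Your treatment of the uniform summability is correct and clean. Since $\rho_k = P_{k;1,0}/p(1,0)$ and the stochastic integral isometry gives $\|P_{k;1,0}\|_2^2 = \E[I_k^2(1,0)]$, Lemma \ref{lemIk} at $t=1,\ x=0$ yields exactly $\|\rho_k\|_2^2 = (4\pi)^{-k/2}\Gamma(1/2)^k/\Gamma(k/2)$, and the $c^k$ from Lemma \ref{lemma1} is dominated by the super-exponential growth of $\Gamma(k/2)$; the tail of the geometric series gives a constant $C$ independent of $N$.

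Your termwise convergence step has a genuine gap in the domination. Identifying the local central limit theorem for the Poisson marginals as the engine of the pointwise convergence, and the $\sqrt N$ bookkeeping ($k+1$ factors of $\rho^N$ in the numerator and one in the denominator, combining with the explicit $N^{k/2}$ to a net $N^0$), are both right. But the assertion that ``Lemma \ref{lemma1} provides a uniform-in-$N$ pointwise majorant $\rho_k^N(\vec s,\vec x)^2 \leq c^k \widetilde\rho_k(\vec s,\vec x)^2$'' is false: Lemma \ref{lemma1} bounds only the $L^2$ \emph{norm}, not the function values. A uniform $L^2$ bound together with pointwise convergence does not yield $L^2$ convergence --- by Fatou one gets $\|\rho_k\|_2^2 \leq \liminf_N \|\rho_k^N\|_2^2$, while Lemma \ref{lemma1} gives $\limsup_N \|\rho_k^N\|_2^2 \leq c^k\|\rho_k\|_2^2$, and if $c>1$ nothing in this pair of inequalities rules out $\|\rho_k^N - \rho_k\|_2 \not\to 0$ (add to $\rho_k$ a bump $g_N$ escaping to a null set with $\|g_N\|_2$ bounded below). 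What you actually need, and should cite directly, is a uniform-in-$m$ quantitative local CLT for the Poisson density, e.g. a bound of the form $\sqrt m\,\PP(X_m=j) \leq K\exp\big(-\kappa (j-m)^2/m\big)$; chaining this over the $k+1$ increments produces a genuine Gaussian pointwise majorant of $\rho_k^N$, after which dominated convergence (or a Scheff\'e-type argument) closes the proof. Note the logical order: this pointwise Gaussian bound is the more primitive fact and is precisely what one would use to prove Lemma \ref{lemma1} itself, so attributing it to Lemma \ref{lemma1} reverses the dependency.
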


Given these lemmas, we deduce the following (which we prove).
\begin{lemma}\label{lemconv} The following convergence occurs in $L^2(\Omega,\mathcal{F},\mathbb{P})$:
\[
\lim_{N\rightarrow\infty} N^{-k/4} \tilde{J}_k(N,N) = \tilde{I}_k(1,0).
\]
\end{lemma}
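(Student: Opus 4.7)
The plan is to leverage the coupling from Step~(1) to realize both $N^{-k/4}\tilde{J}_k(N,N)$ and $\tilde{I}_k(1,0)$ as multiple stochastic integrals against the \emph{same} white noise $\xi^{\otimes k}$, so that their difference lies in a single Wiener chaos. The multiple stochastic integral covariance isometry then reduces the $L^2(\Omega,\mathcal{F},\mathbb{P})$ convergence of random variables to deterministic $L^2$ convergence of kernels, at which point Lemma~\ref{lemma2} does the heavy lifting.

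First I would use the coupling $B_j^N(s_1,s_2)=N^{3/4}\int_{s_1/N}^{s_2/N}\!\int_{I_j^N(s)}\xi(ds,dx)$ to turn every Brownian increment $dB_{i_j}(s_j)$ appearing in the semi-discrete representation of $\tilde{J}_k(N,N)$ into an integration of $\xi$ over a thin space-time strip, and to rescale time from $[0,N]$ to $[0,1]$. The explicit $N^{k/4}$ prefactor absorbs the Jacobians, giving
\[
N^{-k/4}\tilde{J}_k(N,N)=\int_{\Delta_k\times\R^k}\bar{\rho}_k^N(\vec s,\vec x)\,d\xi^{\otimes k}(\vec s,\vec x),
\]
where $\bar{\rho}_k^N(\vec s,\vec x)=\rho_k^N(\vec s,\vec x)$ whenever the induced strip indices $i_j(\vec s,\vec x)=\lceil\sqrt{N}x_j+Ns_j\rceil$ are strictly increasing (i.e.\ $\vec i\in D_k^N$), and vanishes otherwise. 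The strictness of $\vec i\in D_k^N$ (rather than $\partial D_k^N$) is exactly what makes this a genuine iterated multiple stochastic integral free of diagonal It\^o corrections. Since $\tilde{I}_k(1,0)=\int_{\Delta_k\times\R^k}\rho_k\,d\xi^{\otimes k}$ lives in the same chaos, the covariance isometry yields
\[
\E\!\left[\bigl|N^{-k/4}\tilde{J}_k(N,N)-\tilde{I}_k(1,0)\bigr|^2\right]=\|\bar{\rho}_k^N-\rho_k\|^2_{L^2(\Delta_k\times\R^k)},
\]
reducing everything to deterministic $L^2$ convergence of kernels.

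For this deterministic convergence, I would split $\bar\rho_k^N-\rho_k=(\rho_k^N-\rho_k)-\rho_k^N\mathbf{1}_{\partial D_k^N}$. The first summand tends to zero in $L^2$ directly by Lemma~\ref{lemma2}. For the second, observe that for any $\vec s$ with all $s_{j+1}>s_j$ and any $\vec x\in\R^k$, the identity $i_j=i_{j+1}$ forces $|\sqrt{N}(x_{j+1}-x_j)+N(s_{j+1}-s_j)|\le 1$, which fails once $N(s_{j+1}-s_j)$ dominates $\sqrt{N}|x_{j+1}-x_j|$; hence $\mathbf{1}_{\partial D_k^N}(\vec s,\vec x)\to 0$ pointwise almost everywhere on $\Delta_k\times\R^k$. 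Since $|\rho_k\mathbf{1}_{\partial D_k^N}|\le|\rho_k|\in L^2$, dominated convergence gives $\|\rho_k\mathbf{1}_{\partial D_k^N}\|_2\to 0$, and a triangle inequality together with Lemma~\ref{lemma2} controls the remainder $\|(\rho_k^N-\rho_k)\mathbf{1}_{\partial D_k^N}\|_2$.

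The hard part will be precisely this diagonal estimate: one has to rule out the possibility that $\rho_k^N$ concentrates on the set $\partial D_k^N$ (which has $\vec x$-width only of order $N^{-1/2}$) quickly enough to contribute in the $L^2$ limit. The uniform bound $\|\rho_k^N\|_2^2\le c^k\|\rho_k\|_2^2$ from Lemma~\ref{lemma1}, combined with the $L^2$ closeness of $\rho_k^N$ to $\rho_k$ from Lemma~\ref{lemma2}, is exactly what bootstraps the pointwise vanishing of $\mathbf{1}_{\partial D_k^N}$ to the desired $L^2$ vanishing; once this is in hand, all remaining work is routine bookkeeping around the coupling and the isometry.
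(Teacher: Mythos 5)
Your proof is correct and follows the same overall strategy as the paper: realize $N^{-k/4}\tilde J_k(N,N)$ and $\tilde I_k(1,0)$ in the same Wiener chaos via the coupling, apply the covariance isometry to reduce to $L^2$ convergence of kernels, and split the discrepancy into a ``main'' piece controlled by Lemma~\ref{lemma2} plus a ``diagonal'' piece supported on $\partial D_k^N$. Where you genuinely depart from the paper is in the treatment of the diagonal error: the paper estimates $\E[(E_k^N)^2]$ directly, appealing to Lemma~\ref{lemma1} to trade $\rho_k^N$ for $\rho_k$ under the integral and then claiming an explicit $\mathcal{O}(c^k/N)$ rate. As stated, that step really wants a \emph{pointwise} comparison of $\rho_k^N$ with $\rho_k$ rather than the $L^2$ comparison Lemma~\ref{lemma1} literally gives, so the paper is being a little terse there. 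Your splitting $\rho_k^N\mathbf{1}_{\partial D_k^N}=\rho_k\mathbf{1}_{\partial D_k^N}+(\rho_k^N-\rho_k)\mathbf{1}_{\partial D_k^N}$, using a.e.\ pointwise vanishing of $\mathbf{1}_{\partial D_k^N}$ on $\Delta_k\times\R^k$ with dominated convergence for the first term and $\|\rho_k^N-\rho_k\|_2\to0$ from Lemma~\ref{lemma2} for the second, sidesteps that subtlety cleanly. The trade-off is that you lose the explicit $1/N$ decay rate that the paper exhibits, but since only convergence is asserted in the lemma, nothing is lost. (Note that despite your closing remark, Lemma~\ref{lemma1} is never actually invoked in your argument --- Lemma~\ref{lemma2} alone suffices for the diagonal piece.)
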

\begin{proof}
Observe that
\begin{align*}
N^{-k/4}\tilde{J}_k(N,N) &= \int_{[0,1]^k} \sum_{\vec{i}\in [N]^k} \prod_{j=1}^k \int_{I_{i_j}^N(s_j)} dx_j \rho_k^N(\vec{s},\vec{x})\xi^{\otimes k}(\vec{s},\vec{x}) \\
&- \int_{[0,1]^k}\sum_{\vec{i} \in \partial D_k^N} \prod_{j=1}^k \int_{I_{i_j}^N(s_j)} dx_j \rho_k^N(\vec{s},\vec{x})\xi^{\otimes k}(\vec{s},\vec{x})
\end{align*}
In the first term we can replace $[0,1]^k$ by $\Delta_k(1)$ and the $\sum\prod\int$ can be replaced by $\int_{\R^k}$ (all since $\rho_k^N$ vanishes otherwise). By Lemma \ref{lemma2}, we can conclude that the first term limits to $\tilde{I}_k(1,0)$. Call $E_k^N$ the second term. By Lemma \ref{lemma1}
\begin{equation}\label{ENK}
\E\big[(E_k^N)^2\big] \leq c^k \int_{\Delta_k} d\vec{s} \sum_{i\in \partial D_k^N} \prod_{j=1}^k \int_{I_{i_j}^N(s_j)} dx_j (\rho_k(\vec{s},\vec{x}))^2 = \mathcal{O}(c^k/N).
\end{equation}
The $1/N$ factor can be understood as the cost of staying in the same level for two samples of times.
\end{proof}

We can now complete the proof of Theorem \ref{Thmodified}.
From Lemma \ref{lemconv} we know that for each $M\geq 1$,
$$
\lim_{N\rightarrow \infty} \sum_{k=0}^{M} N^{-k/4}\tilde{J}_k(N,N) = \sum_{k=0}^{M} \tilde{I}_k(1,0)
$$
with convergence in $L^2(\Omega,\mathcal{F},\mathbb{P})$. From Section \ref{L3} we know that for all $\ep>0$ there exists $M\geq 1$ such that
$$
\sum_{k=M}^{\infty} \tilde{I}_k(1,0) <\ep.
$$

Thus, it remains to show that for all $\ep>0$ there exists $M\geq 1$ such that
$$
\sum_{k=M}^{\infty} N^{-k/4}\tilde{J}_k(N,N) <\ep,
$$
once $N>M$. By Lemma \ref{lemma1} we can bound each term on the left-hand side of the above expression by a constant times
$$
\Vert \rho^N_k(\vec{t},\vec{x})\Vert_2^2 + \E\big[(E_k^N)^2\big].
$$
By Lemma \ref{lemma1} and equation (\ref{ENK}) these are both bounded by constants times $\Vert \rho_k(\vec{t},\vec{x})\Vert_2^2$. This shows that
$$
\sum_{k=M}^{\infty} N^{-k/4}\tilde{J}_k(N,N) \leq c \sum_{k=M}^{\infty} \tilde{I}_k(1,0)
$$
and hence taking $M$ large enough this can be bounded by $\ep$ as desired.
\end{proof}

\section{Moments of q--TASEP via duality and Bethe ansatz}\label{L5} \index{Bethe ansatz}%
The past few sections have focused on making sense of the KPZ equation (via the SHE) and then deriving various approximations for it. We turn to one such approximation which enjoys certain exact solvability. This exact solvability goes through in the limit to the KPZ equation / SHE. In that context, the exact solvability amounts to the ability to write down simple and exact formulas for the expectation (over the white noise $\xi$) of the solution to the SHE with delta initial data. Unfortunately, the moment problem for the SHE is not well-posed, meaning the knowledge of these moments does not determine its distribution. Thus, despite the many formulas, this does not provide a direct rigorous route to exactly solving the equation. By going to the below discretization, we preserve this exact solvability while also resolving the issue of the moment problem well-posedness.

The interacting particle system $q$--TASEP will be the main subject of the remaining sections (for more on this model, see \cite{BorCor,BCS,ICreview,ICreviewICM}). It is a model for traffic on a one--lane road where cars slow down as they approach the next one. The scale on which this occurs is controlled by $q\in (0,1)$.

Particles are labeled right to left as $x_1(t)>x_2(t)>\ldots,$ where $x_i(t)$ records the location of particle $i$ at time $t$. In continuous time each particle can jump right by one according to independent exponential clocks of rate $1-q^{\text{gap}},$ where gap is the number of empty sites before the next right particle. In other words, for particle $i$ the gap is $x_{i-1}-x_i-1.$

\begin{figure}[ht]
\begin{center}
\includegraphics[scale=.5]{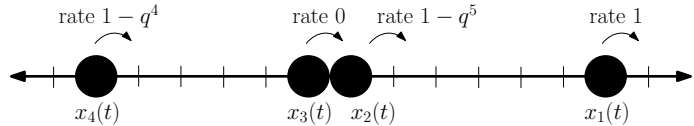}
\end{center}
\caption{The $q$--TASEP with particles labeled $x_i(t)$ and jump rates written in.}
\end{figure}

Since the jumping of particle $i$ only depends on $i-1$, it suffices to consider an $N$--particle restriction. In this case, the state space is given by
$$
X^N :=\big\{\vec{x}=(x_0,\ldots,x_N): \infty=x_0>x_1>\ldots>x_N\big\}.
$$
The role of fixing $x_0=\infty$ is to have a virtual particle to make notation nicer. This system can be encoded as a continuous time Markov process on $X^N$.

Recall the following facts about Markov processes on $X$  (see \cite{Lig} for more background on Markov processes and interacting particle systems).
A Markov process is defined via a semigroup $\{S_t\}_{t\geq 0}$ with $S_{t_1}S_{t_2}=S_{t_1+t_2}$ and $S_0=\mathrm{Id},$ acting on a suitable domain of functions $f:X\rightarrow \R$. For $f:X\rightarrow \mathbb{R}$ in the domain of $S,$ define $\mathbb{E}^x\big[f(x(t))\big]=S_tf(x),$ where $X$ is the state space and $\mathbb{E}^x$ is expectation with respect to starting state $x(0)=x$. The generator of a Markov process is defined as
$$
L=\lim_{t\rightarrow 0} \frac{S_t-\mathrm{Id}}{t}
$$
and captures the process since
$$
S_t = e^{tL} = \sum_{k\geq 0} \frac{1}{k!}(tL)^k.
$$
It follows then that $\tfrac{d}{dt} S_t = S_t L = LS_t,$ so for $f:X\rightarrow \mathbb{R}$
$$
\frac{d}{dt} \mathbb{E}^x\big[f(x(t))\big] = \mathbb{E}^x\big[Lf(x(t))\big] = L\mathbb{E}^x\big[f(x(t))\big].
$$

The generator of $q$--TASEP acts on $f:X^N\rightarrow\mathbb{R}$ as
$$
(L^{q-TASEP}f)(\vec{x}) := \sum_{i=1}^N (1-q^{x_{i-1}-x_i-1})\left(f(\vec{x_i}^+) - f(\vec{x})\right),
$$
where $\vec{x_i}^+=(x_0,x_1,\ldots,x_{i}+1,\ldots,x_N)$ represents the movement of particle $i$ by 1 to the right. Note that we will not worry here about specifying the domain of the $q$--TASEP generator.

Consider the functions $f_1,\ldots,f_N$ defined by $f_n(\vec{x})=q^{x_n}$.  How can $q^{x_n(t)}$ change in an instant of time?
\begin{align*}
dq^{x_n}(t) &= (1-q^{x_{n-1}-x_n-1})(q^{x_n+1}-q^{x_n})dt + \text{ noise  (i.e. martingale) }\\
&=(1-q)(q^{x_{n-1}-1}-q^{x_n})dt + \text{ noise }.
\end{align*}
Now taking expectations gives
$$
\frac{d}{dt} \mathbb{E}[q^{x_n(t)}] = (1-q)\left( \mathbb{E}[q^{x_{n-1}(t)-1}] - \mathbb{E}[q^{x_n(t)}] \right).
$$
which shows that as a function of $n\in\{1,\ldots,N\}$ the expectations $\mathbb{E}[q^{x_n(t)}]$ solve a triangular, closed system of linear ordinary differential equations.

In order to generalize this system, and then ultimately solve it, we will use a Markov duality.

\begin{definition}
Suppose $x(\cdot),y(\cdot)$ are independent Markov processes with state spaces $X,Y$ and generators $L^X,L^Y$. Let $H:X\times Y\rightarrow \mathbb{R}$ be bounded and measurable. Then we say $x(\cdot)$ and $y(\cdot)$ are dual with respect to $H$ if for all $x\in X,y\in Y$, $L^XH(x,y)=L^YH(x,y)$.
\end{definition}
\begin{exercise}
Show that duality implies $\E^x[H(x(t),y)]=\E^y[H(x,y(t))]$ for all $t$ and hence
$$
\frac{d}{dt} \E^x\big[H(x(t),y)\big]= L^X\E^x\big[H(x(t),y)\big] = L^Y \E^x \big[ H(x(t),y)\big].
$$
\end{exercise}

It turns out that $q$--TASEP enjoys a simple duality with a Markov process called the $q$--Boson process \cite{SasWad,BCS} (which is a special totally asymmetric zero range process) with state space
$$
Y^N =\big\{ \vec{y} = (y_0,\ldots,y_N) \vert y_i\in \mathbb{Z}_{\geq 0}\big\},
$$
in which there can be $y_i$ particles above site $i$ and in continuous time. According to independent exponential clocks, one particle moves from $i$ to $i-1$ at rate $1-q^{y_i}$, nothing enters site $N$ and nothing exits site $0$ (so the total number of particles is conserved over time).

\begin{figure}[ht]
\begin{center}
\includegraphics[scale=.5]{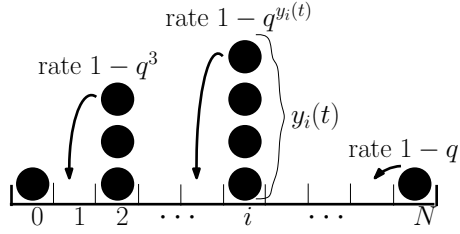}
\end{center}
\caption{The $q$--Boson process.}
\end{figure}

The generator of the $q$--Boson process acts on functions $h:Y^N\rightarrow\mathbb{R}$ as
$$
(L^{\text{q--Boson}}h)(\vec{y}) := \sum_{i=1}^N (1-q^{y_i})\big(h(\vec{y}^{i,i-1}) -  h( \vec{y})\big),
$$
where $\vec{y}^{i,i-1}=(y_0,y_1,\ldots,y_{i-1}+1,y_i-1,\ldots,y_N)$ represents moving a particle from $i$ to $i-1$. Note that $q$--TASEP gaps evolve according to the $q$--Boson jump rates. So, in a sense, the duality we now present is actually a self-duality.

\begin{theorem}[\cite{BCS}]
As Markov processes, $q$--TASEP $\vec{x}(t)\in X^N$ and $q$--Boson process $\vec{y}(t)\in Y^N$ are dual with respect to
$$
H(\vec{x},\vec{y}) = \prod_{i=0}^N q^{(x_i+i)y_i}
$$
with the convention that $H=0$ if $y_0>0$ and otherwise the product starts at $i=1$.
\end{theorem}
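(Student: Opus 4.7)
The plan is to verify the duality directly from the definition: compute $L^{q\text{-TASEP}} H(\vec x,\vec y)$ (the generator acting in $\vec x$) and $L^{q\text{-Boson}} H(\vec x,\vec y)$ (the generator acting in $\vec y$) and show they agree pointwise. The computation is clean because $H$ factorizes as $\prod_{i} q^{(x_i+i) y_i}$, so each local move multiplies $H$ by an easily identifiable scalar rather than producing a complicated new function.

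Concretely, shifting $x_i \mapsto x_i+1$ multiplies the $i$-th factor of $H$ by $q^{y_i}$, leaving the others fixed, so
\[
 L^{q\text{-TASEP}} H(\vec x,\vec y) \;=\; H(\vec x,\vec y) \sum_{i=1}^{N} \bigl(1-q^{x_{i-1}-x_i-1}\bigr)\bigl(q^{y_i}-1\bigr).
\]
On the $\vec y$ side, replacing $(y_{i-1},y_i)$ by $(y_{i-1}+1,y_i-1)$ multiplies $H$ by $q^{(x_{i-1}+i-1)-(x_i+i)}=q^{x_{i-1}-x_i-1}$, yielding
\[
 L^{q\text{-Boson}} H(\vec x,\vec y) \;=\; H(\vec x,\vec y) \sum_{i=1}^{N} \bigl(1-q^{y_i}\bigr)\bigl(q^{x_{i-1}-x_i-1}-1\bigr).
\]
Using $(1-a)(b-1) = -(1-a)(1-b)$, both sums collapse to $-\sum_{i=1}^{N}(1-q^{x_{i-1}-x_i-1})(1-q^{y_i})$, which is manifestly symmetric under swapping the two factors. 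This is the whole algebraic content of the theorem.

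The one subtlety, and what I would treat most carefully, is the boundary convention. Two things need to be reconciled: the virtual particle $x_0=\infty$ (which makes $q^{x_0-x_1-1}=0$, consistent with particle $1$ having rate $1$ in $q$-TASEP) and the rule $H\equiv 0$ whenever $y_0>0$ (since the $q$-Boson allows particles to accumulate at site $0$, but no term $q^{(x_0+0)y_0}$ with $y_0>0$ makes sense under $x_0=\infty$). When $y_0=0$ and the $q$-Boson takes $i=1$, the move creates $y_0=1$ and the convention forces $H(\vec x,\vec y^{1,0})=0$, so that summand is $-(1-q^{y_1})H(\vec x,\vec y)$; on the $q$-TASEP side the same summand is $(1-0)(q^{y_1}-1)H(\vec x,\vec y)=-(1-q^{y_1})H(\vec x,\vec y)$, so the match still holds under the convention. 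When $y_0>0$, $H(\vec x,\vec y)=0$ and each $H(\vec x,\vec y^{i,i-1})=0$ as well (a move can only keep $y_0$ the same or increase it), so both generator actions are identically zero and the identity is trivial. With these checks the exchange $L^{q\text{-TASEP}}H=L^{q\text{-Boson}}H$ holds on all of $X^N\times Y^N$, which is precisely the definition of duality with respect to $H$.

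I expect no serious obstacle: the proof is entirely an algebraic verification once the shift-rules for the two generators are written down. The only real pitfall is bookkeeping at the $i=1$ boundary, where the convention $H=0$ for $y_0>0$ is doing nontrivial work; getting that sign and factor right is where one has to be careful.
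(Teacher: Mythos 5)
Your proposal is correct and follows essentially the same direct algebraic verification as the paper: compute the multiplicative effect of each elementary move on the product form of $H$, factor out $H$, and observe that both generator actions collapse to $-\sum_{i=1}^N (1-q^{x_{i-1}-x_i-1})(1-q^{y_i})H(\vec x,\vec y)$. The paper's proof is terser and does not explicitly unpack the $i=1$ boundary, but your careful check that the convention $H=0$ for $y_0>0$ and the rule $x_0=\infty$ (forcing $q^{x_0-x_1-1}=0$) reinforce each other is exactly the right bookkeeping and is implicit in the paper's third equality.
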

\begin{proof}
For all $\vec{x},\vec{y}$,
\begin{align*}
(L^{\text{q--TASEP}}H)(\vec{x},\vec{y}) & = \sum_{i=1}^N (1-q^{x_{i-1}-x_{i}-1})(H(\vec{x}_i^+,\vec{y}) - H(\vec{x},\vec{y}))\\
&= \sum_{i=1}^N (1-q^{x_{i-1}-x_{i}-1})(q^{y_i}-1)\prod_{j=0}^N q^{(x_j+j)y_j}\\
&= \sum_{i=1}^N (1-q^{y_i})(H(\vec{x},\vec{y}^{i,i-1})-H(\vec{x},\vec{y}))\\
&= (L^{\text{q--Boson}}H)(\vec{x},\vec{y})
\end{align*}
\end{proof}

We will use this theorem to solve for joint moments of $q^{x_n(t)+n}$ as $n$ varies in $\{1,\ldots,N\}$. In particular if we treat $\vec{x}$ as fixed then the duality implies
\begin{equation}\label{Star}
\frac{d}{dt} \E^{\vec{x}}\left[\prod_{i=0}^N q^{(x_i(t)+i)y_i}\right] = L^{\text{q--Boson}}\E^{\vec{x}}\left[\prod_{i=0}^N q^{(x_i(t)+i)y_i}\right],
\end{equation}
where $L^{\text{q--Boson}}$ acts in the $\vec{y}$ variables.

\begin{proposition}\label{partA}
Fix $q$--TASEP initial data $\vec{x}\in X^N$. If $h: \mathbb{R}_{\geq 0}\times Y^N\rightarrow\mathbb{R}$ solves

(1) For all $\vec{y}\in Y^N$ and $t\in \mathbb{R}_{\geq 0}$
$$
\frac{d}{dt} h(t;\vec{y}) = L^{\text{q--Boson}}h(t;\vec{y});
$$

(2) For all $\vec{y}\in Y^N$,
$$
h(0;\vec{y}) = h_0(\vec{y}) := H(\vec{x},\vec{y});
$$
Then for all $\vec{y}\in Y^N$ and $t\in \mathbb{R}_{\geq 0}$, $\E^{\vec{x}}\big[H(\vec{x}(t),\vec{y})\big]= h(t;\vec{y})$.
\end{proposition}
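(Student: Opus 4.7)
The plan is to show that $g(t;\vec y) := \E^{\vec x}\bigl[H(\vec x(t),\vec y)\bigr]$ itself satisfies (1) and (2), and then argue uniqueness to conclude $h = g$. Condition (2) is immediate, since $g(0;\vec y) = H(\vec x(0),\vec y) = H(\vec x,\vec y) = h_0(\vec y)$.

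For condition (1), I would apply Kolmogorov's backward equation for the $q$-TASEP semigroup to the bounded function $\vec x \mapsto H(\vec x,\vec y)$, obtaining
\[
\partial_t g(t;\vec y) = \E^{\vec x}\bigl[(L^{q\text{-TASEP}} H(\cdot,\vec y))(\vec x(t))\bigr],
\]
where $L^{q\text{-TASEP}}$ acts on the first argument of $H$. The duality theorem just established gives the pointwise identity $(L^{q\text{-TASEP}} H)(\vec x,\vec y) = (L^{q\text{-Boson}} H)(\vec x,\vec y)$, and since $L^{q\text{-Boson}}$ is a finite sum of shifts in $\vec y$ with coefficients depending only on $\vec y$, it commutes with $\E^{\vec x}$, yielding
\[
\partial_t g(t;\vec y) = (L^{q\text{-Boson}} g(t;\cdot))(\vec y),
\]
which is (1).

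For uniqueness, the key observation is that $q$-Boson particles only move from site $i$ to $i-1$ for $i \ge 1$, so site $0$ is absorbing. Hence, starting from any $\vec y$ with $\sum y_i$ particles distributed in $\{0,1,\dots,N\}$, the set $R(\vec y)$ of configurations reachable by iterated application of $L^{q\text{-Boson}}$ is \emph{finite}, since no particle can make more than $N$ left jumps. On $R(\vec y)$, the system (1)--(2) becomes a finite linear Cauchy problem, whose solution is unique by standard ODE theory. Applying this uniqueness to both $h$ and $g$ concludes $h = g$. The main technical point I expect to need care with is justifying the interchange of $\partial_t$ and $\E^{\vec x}$ on the non-compact state space $X^N$; this should follow from the monotonicity $x_i(t)\ge x_i(0)$ (since $q$-TASEP only moves right) together with $q \in (0,1)$, which yield the uniform-in-$t$ bound $|H(\vec x(t),\vec y)|\le H(\vec x(0),\vec y)$ and a similar uniform bound on $|L^{q\text{-TASEP}} H(\vec x(t),\vec y)|$, after which dominated convergence supplies the interchange.
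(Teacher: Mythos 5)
Your proof is correct and follows essentially the same route as the paper: show that $g(t;\vec y)=\E^{\vec x}[H(\vec x(t),\vec y)]$ satisfies the $q$-Boson backward system via the Kolmogorov equation and the just-established duality $L^{q\text{-TASEP}}H = L^{q\text{-Boson}}H$, then invoke ODE uniqueness. The only cosmetic difference is in how you justify uniqueness: the paper notes that $L^{q\text{-Boson}}$ conserves particle number and acts triangularly on each (finite) $k$-particle subspace of $Y^N$, whereas you observe that the set of configurations reachable from $\vec y$ by leftward moves absorbed at site $0$ is finite -- these are two framings of the same fact, and both reduce to a finite-dimensional linear Cauchy problem. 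Your attention to the interchange of $\partial_t$ and $\E^{\vec x}$, via the monotone bound $|H(\vec x(t),\vec y)|\le H(\vec x(0),\vec y)$, is a technical point the paper passes over silently and is a welcome addition.
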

\begin{proof}
By \eqref{Star}, Proposition \ref{partA}(1) must hold for $\E^{\vec{x}}[H(\vec{x}(t),\vec{y})]$ and Proposition \ref{partA}(2) follows by definition. Uniqueness follows because $L^{\text{q--Boson}}$ preserves the number of particles and restricts to a triangular system of coupled ODEs on each $k$--particle subspace. Then, standard ODE uniqueness results \cite{Cod} imply uniqueness. In other words, the system is closed due to particle conservation of the $q$--Boson process.
\end{proof}

Since the $q$--Boson process acts on $k$--particle subspaces, it preserves the state spaces
$$
Y_k^N = \big\{\vec{y} \in Y^N: \sum y_i = k\big\}.
$$
For $\vec{y}\in Y_k^N$ we can associate a vector of weakly ordered particle locations in
$$W_{\geq 0}^k = \big\{\vec{n}=(n_1\geq\ldots\geq n_k\geq 0)\big\}.$$
Write $\vec{n}(\vec{y})$ and $\vec{y}(\vec{n})$ for this association. For example, if $N=3$ and $k=4$ and $\vec{y}=(y_0=0,y_1=3,y_2=0,y_3=1)$ then $\vec{n}=(3,1,1,1)$. We will abuse notation and write $h(t,\vec{n}):=h(t, \vec{y}(n)).$

In order to solve the system of equations for $h$, its more convenient to work in these $n$ coordinates. Let us consider how the system looks for different $k$.

For $k=1$, $\vec{n}=(n)$ and the evolution equation becomes
$$\tfrac{d}{dt} h(t;\vec{n}) = (1-q) \nabla h(t;\vec{n}),$$
where $(\nabla f)(n) = f(n-1) - f(n)$. This is just the generator of a single $q$--Boson particle moving.

For $k=2$, $\vec{n}=(n_1\geq n_2)$ and we must consider two cases
\begin{itemize}
\item If $n_1>n_2,$ then we have for Proposition \ref{partA}(1)
$$
\frac{d}{dt} h(t;n_1,n_2) = \sum_{i=1}^2 (1-q)\nabla_i h(t;n_1,n_2).
$$
\item
If $n_1=n_2=n$ then
$$
\frac{d}{dt} h(t;n,n) = (1-q^2)\nabla_2h(t;n,n),
$$
where we have chosen $\nabla_2$ in order to preserve the order of $\vec{n}$ so as to stay in $W_{\geq 0}^{k}$. Unfortunately this is not constant coefficient or separable so it is not a priori clear how to solve it. Moreover, as $k$ grows the number of boundary cases may grow quite rapidly like $2^{k-1}$.
\end{itemize}

To resolve this, we use an idea of Bethe \cite{Bethe} from 1931. We try to rewrite in terms of solution to $k$ particle free evolution equation subject to $k-1$ two--body boundary conditions. Usually this is not possible and one has many body boundary conditions, but if it is possible then we say the system is coordinate Bethe ansatz solvable. Let us see this idea in motion. \index{Bethe ansatz}%

For $k=2$ consider $u: \mathbb{R}_{\geq 0}\times \mathbb{Z}_{\geq 0}^2 \rightarrow \mathbb{R}$ which satisfies
$$
\frac{d}{dt} u(t;\vec{n}) = \sum_{i=1}^2 (1-q)\nabla_i u(t;\vec{n}).
$$
Then, when $n_1>n_2$ this right--hand--side exactly matches that of the true evolution equation. However, for $n_1=n_2=n$ the two right hand sides differ by
\begin{equation}\label{Star2}
\sum_{i=1}^2 (1-q)\nabla_i u(t;n,n) - (1-q^2)\nabla_2 u(t;n,n).
\end{equation}
If we could find a $u$ such that $\eqref{Star2}\equiv 0$ then when we restrict $u$ to $\{\vec{n}:n_1\geq n_2\},$ it will actually solve the true evolution equation: $u\vert_{\vec{n}\in W_{\geq 0}^2}=h$. The boundary condition is equivalent to
$$
(\nabla_1-q\nabla_2)u\vert_{n_1=n_2} \equiv 0.
$$
The only way we might hope to find such a $u$ is to tinker with the initial data outside the set $W_{\geq 0}^2$. In a sense, this is like an advanced version of the reflection principle.

For $k=3$, we might need more than two body boundary conditions (e.g. $n_1=n_2=n_3$), but amazingly all higher order cases follow from the two body cases. Hence we have a coordinate Bethe ansatz solvable system. This is shown by the following result.

\begin{proposition}\label{partB}
If $u:\mathbb{R}_{\geq 0}\times \mathbb{Z}_{\geq 0}^k \rightarrow \mathbb{R}$ solves

(1) For all $\vec{n}\in \mathbb{Z}_{\geq 0}^k$ and $t\in \mathbb{R}_{\geq 0}$,
$$
\frac{d}{dt} u(t;\vec{n}) = \sum_{i=1}^k (1-q)\nabla_i u(t;\vec{n});
$$

(2) For all $\vec{n} \in \mathbb{Z}_{\geq 0}^k$ such that $n_i=n_{i+1}$ for some $i\in \{1,\ldots,k-1\}$ and all $t\in \mathbb{R}_{\geq 0}$,
$$
(\nabla_i - q\nabla_{i+1})u(t;\vec{n})=0;
$$

(3) For all $\vec{n} \in W_{\geq 0}^k, u(0;\vec{n})=h_0(\vec{n});$

Then for all $\vec{n}\in W_{\geq 0}^k, h(t;\vec{n})=u(t;\vec{n})$.
\end{proposition}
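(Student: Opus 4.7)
The plan is to argue by uniqueness. Let $\tilde h := u\vert_{W_{\geq 0}^k}$; my goal is to show that $\tilde h$ satisfies hypotheses (1) and (2) of Proposition \ref{partA} (after translating to $\vec n$ coordinates via the bijection between $Y_k^N$ and $W_{\geq 0}^k$), and then invoke the uniqueness established there to conclude $\tilde h \equiv h$. Hypothesis (2) is immediate from hypothesis (3) of Proposition \ref{partB}, so the entire content lies in matching the evolution equation on the Weyl chamber.

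For the evolution, I would fix $\vec n \in W_{\geq 0}^k$ and decompose it into maximal blocks of equal coordinates. A block of length one contributes a $q$-Boson rate $(1-q^1)=(1-q)$ with gradient $\nabla_i$, which agrees term-by-term with the corresponding summand of the free evolution in Proposition \ref{partB}(1). The only nontrivial case is a block $n_{i+1}=n_{i+2}=\cdots=n_{i+\ell}$ of length $\ell\geq 2$, which in $\vec y$ coordinates corresponds to $y_{n_{i+1}}=\ell$ and hence a single $q$-Boson term with rate $(1-q^\ell)$ acting through $\nabla_{i+\ell}$ (the rightmost index in the block, chosen to keep $\vec n$ in $W_{\geq 0}^k$). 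Thus the matching reduces to the identity
\[
\sum_{j=i+1}^{i+\ell}(1-q)\,\nabla_j u(t;\vec n)\;=\;(1-q^\ell)\,\nabla_{i+\ell}u(t;\vec n),
\]
required at every $\vec n$ where the block structure holds.

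This identity is the heart of the argument and the step I expect to be the main obstacle to get right. Iterating the boundary condition (2) along the coincidences $n_j=n_{j+1}$ for $j=i+1,\dots,i+\ell-1$ gives $\nabla_j u = q\,\nabla_{j+1} u$, hence $\nabla_j u = q^{i+\ell-j}\,\nabla_{i+\ell} u$ throughout the block. Summing the resulting geometric series yields $(1-q)(1+q+\cdots+q^{\ell-1})\nabla_{i+\ell}u=(1-q^\ell)\nabla_{i+\ell}u$, precisely the required collapse. The cleverness of the Bethe ansatz is that this particular linear combination $\nabla_i-q\nabla_{i+1}$ (and no other) makes the telescoping close at the two-body level, so no higher many-body conditions are needed.

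Summing the verified block identity over all maximal blocks yields $\frac{d}{dt}\tilde h(t;\vec n)=(L^{q\text{-Boson}}\tilde h)(t;\vec n)$ for $\vec n\in W_{\geq 0}^k$, which together with the initial-data match lets Proposition \ref{partA} conclude $\tilde h = h$. A minor subtlety that must be addressed separately is the wall at $n_k=0$: by the convention $H=0$ whenever $y_0>0$, the function $h$ vanishes identically on $\{n_k=0\}$, so one must either incorporate this as an additional boundary condition on $u$ or check that the algebraic matching above, restricted to $\vec n$ with $n_k=0$, still reproduces the correct (vanishing) right-hand side; in either case the argument proceeds along the same uniqueness lines and presents no genuine new difficulty beyond the block identity above.
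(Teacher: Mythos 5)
Your proof is correct and follows precisely the approach the paper hints at in the accompanying exercise: iterate the two-body boundary condition $\nabla_j u = q\,\nabla_{j+1} u$ along each maximal coincidence block to telescope $\sum (1-q)\nabla_j$ into $(1-q^\ell)\nabla_{i+\ell}$, thereby recovering the $q$--Boson generator on $W_{\geq 0}^k$, and then invoke the uniqueness furnished by Proposition \ref{partA}. Your flag about the wall at $n_k=0$ is appropriate (the paper glosses over it); the cleanest way to close it is to observe that the explicit $u$ of Theorem \ref{thmgenmom} extends to $\mathbb{Z}^k$ and vanishes whenever $n_k\leq 0$, so $\nabla_k u$ vanishes there and the extra free-evolution term drops out exactly as needed.
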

\begin{exercise}
Prove the proposition. Hint: if $n_1=\cdots =n_c>n_{c+1}$ then use the boundary condition to replace $\sum_{i=1}^{c} (1-q)\nabla_i$ by $(1-q^c)\nabla_c$, and use this to rewrite the free evolution equation into $L^{\text{q--Boson}}$ (when written in terms of the $\vec{n}$ variables).
\end{exercise}

It is possible to solve this for general $h$, cf. \cite{BCPS13}. We will focus only on the particular case $h_0(\vec{n})= \prod_{i=1}^k \mathbf{1}_{n_i>0}$. This initial data corresponds (via Proposition \ref{partA}) with studying step initial data $q$--TASEP where $x_i(0)=-i$ for $i=1,\ldots, N$. Then $H\big(\vec{x},\vec{y}(\vec{n})\big) = \prod_{i=1}^k q^{x_{n_i}(t)+n_i}=1$ as long as all $n_i\geq 0$. Thus, taking $h_0(\vec{n})=\prod_{i=1}^k \mathbf{1}_{n_i>0}$ and solving the system in Proposition \ref{partB} we find that for $\vec{n}\in W_{\geq 0}^{k}$,
$$
\mathbb{E}^{\text{step}}\left[ \prod_{i=1}^k q^{x_{n_i}(t)+n_i}\right]=u(t;\vec{n}).
$$
The idea for solving this (in fact for general initial data) also traces back to Bethe \cite{Bethe}.
First, solve the one particle free evolution equation (fundamental solution). Then, use linearity to take superpositions of fundamental solutions in order to try to satisfy boundary and initial conditions. It is not a priori obvious that this will work ... but it does.

\section{Exact formulas and the replica method}\label{L6}
\index{Replica method}%
The goal of the first part of this section is to explicitly solve the true evolution equation with initial data $h_0(\vec{n})=\prod_{i=1}^k \mathbf{1}_{n_i>0}.$ This will yield a formula for $\E^{\text{step}}[\prod_{i=1}^k q^{x_{n_i}(t)+n_i}].$ We will utilize Proposition \ref{partB} which reduces this to solving the free evolution equation subject to two--body boundary conditions. Let us start with the case $k=1$ (for which there are no boundary conditions). For $z\in \mathbb{C}\backslash\{1\}$ define
$$
u_z(t;n):= \frac{e^{(q-1)tz}}{(1-z)^n}.
$$
It is immediate to check that Proposition \ref{partB}(1) is satisfied:
$$
\frac{d}{dt} u_z(t;n) = (1-q)\nabla u_z(t;n).
$$
Since $k=1$, Proposition \ref{partB}(2) is not present so it remains to check 3). Note that linear combinations of $u_z(t;n)$ over $z$'s still solve Proposition \ref{partB}(1). Consider then
$$
u(t;n) = \frac{-1}{2\pi i} \oint u_z(t;n) \frac{dz}{z},
$$
where the contour contains $1$ but not $0$. When $t=0$ and $n\geq 1$ expand contours to infinity. We have at least $z^{-2}$ decay at infinity hence no residue (recall from \cite{Alhfors}) at infinity. However, we crossed a first order pole at $z=0$ which gives $u(0;n)\equiv 1$. When $n\leq 0$, there is, on the other hand, no pole at at $z=1$, hence the integral is necessarily zero. This shows Proposition \ref{partB}(3).

Hence we have proved that
$$
\E^{\text{step}}\big[ q^{x_{n}(t)+n}\big] = u(t;n) = \frac{-1}{2\pi i} \oint\frac{e^{(q-1)tz}}{(1-z)^n}  \frac{dz}{z}.
$$
We will now state the general $k$ version of this result. This may appear as something of a rabbit pulled out of a hat. The origins of this formula are demystified in Section \ref{secspec}.

\begin{theorem}\label{thmgenmom}
Proposition \ref{partB} with $h_0(\vec{n}) = \prod_{i=1}^k \mathbf{1}_{n_i>0}$ is solved by
$$
u(t;\vec{n}) = \frac{(-1)^k q^{k(k-1)/2}}{(2\pi i)^k} \oint\cdots\oint \prod_{1\leq A<B\leq k} \frac{z_A-z_B}{z_A-qz_B} \prod_{j=1}^k u_{z_j}(t;n_j)\frac{dz_j}{z_j},
$$
with contours such that the $z_A$ contour contains $\{qz_B\}_{B>A}$ and $1$ but not $0.$
\end{theorem}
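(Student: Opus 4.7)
The strategy is to verify that the candidate $u(t;\vec{n})$ defined by the $k$-fold nested contour integral satisfies each of the three hypotheses of Proposition \ref{partB} for the initial data $h_0(\vec{n}) = \prod_{i=1}^k \mathbf{1}_{n_i > 0}$; by uniqueness this identifies it with the true solution on $W^k_{\geq 0}$, which via Proposition \ref{partA} yields the asserted moment formula.

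For hypothesis (1), the single-variable identities $\partial_t u_z(t;n) = (q-1)z\, u_z(t;n)$ and $\nabla u_z(t;n) = -z\, u_z(t;n)$ give $\partial_t u_{z_j} = (1-q)\nabla_j u_{z_j}$ for each $j$. Since the Bethe prefactor and the measure are independent of $t$ and $\vec{n}$, differentiating under the integral and summing over $j$ yields the $k$-particle free equation. For hypothesis (2), the operator $(\nabla_i - q\nabla_{i+1})$ pulls out a factor $-(z_i - qz_{i+1})$ from the integrand, cancelling the denominator of the Bethe factor $(z_i - z_{i+1})/(z_i - qz_{i+1})$ and leaving $-(z_i - z_{i+1})$ in its place. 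With the pole at $z_i = qz_{i+1}$ now absent, the $z_i$ contour may be deformed onto the $z_{i+1}$ contour (nothing else lies between them in the original nested configuration). When $n_i = n_{i+1}$, every factor apart from $(z_i - z_{i+1})$ is symmetric under $z_i \leftrightarrow z_{i+1}$: the pairs $\{(A,i),(A,i+1)\}$ and $\{(i,B),(i+1,B)\}$ of Bethe factors (with $A,B$ outside $\{i,i+1\}$) get permuted among themselves, while $u_{z_i}(t;n_i)u_{z_{i+1}}(t;n_i)$ and the measure are manifestly symmetric. The resulting antisymmetric integrand over equal contours integrates to zero.

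For hypothesis (3) with $\vec{n} \in W^k_{\geq 0}$, I distinguish two cases. If $n_k = 0$, then $u_{z_k}(0;0) = 1$ eliminates the pole at $z_k = 1$, and the remaining $z_k$-poles (at $z_k = 0$ and at $z_k = z_A/q$ for $A < k$) all lie outside the innermost $z_k$ contour by nesting, so the integral vanishes, matching $h_0(\vec{n}) = 0$. If instead every $n_i \geq 1$, I would argue by induction on $k$, the base case $k=1$ having been verified just before the theorem. Deforming the $z_k$ contour to infinity produces no residue there since the integrand decays like $z_k^{-n_k-1}$, so the original integral equals minus the sum of residues at $z_k = 0$ and at $z_k = z_A/q$ for $A < k$. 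The residue at $z_k = 0$ collapses each $(z_A - z_k)/(z_A - q z_k)$ to $1$ and, tracking the prefactor ratio $q^{k(k-1)/2}/q^{(k-1)(k-2)/2} = q^{k-1}$, leaves $q^{k-1}$ times the $(k-1)$-particle integral for $(n_1,\ldots,n_{k-1})$, which equals $q^{k-1}$ by induction. Matching $u(0;\vec{n}) = 1$ thus reduces to showing that the cross-residues at $z_k = z_A/q$ contribute exactly $1 - q^{k-1}$ after the remaining $k-1$ integrations.

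The main technical obstacle is this identity for the sum of cross-residues. I would prove it by iterating the residue procedure: each substitution $z_k = z_A/q$ creates new singular factors in the remaining $z_{A'}$ variables that, combined with the pre-existing nested contours, allow a further round of contour-to-infinity deformations, and the whole sum telescopes to $1 - q^{k-1}$. An alternative, more direct route is to collapse all $k$ contours simultaneously onto small circles around $z = 1$, picking up residues at each crossing of the $z_i = qz_j$ poles; the resulting iterated residue at $\vec{z} = (1,\ldots,1)$ matches the desired value via an explicit $q$-combinatorial identity. The case $k = 2$ illustrates the mechanism: the lone cross-residue at $z_2 = z_1/q$ substituted into the remaining $z_1$-integral produces a new pole at $z_1 = q$, whose residue evaluates to $1 - q$, exactly balancing the $q$ contribution from the $z_2 = 0$ residue to yield $u(0;n_1,n_2) = 1$.
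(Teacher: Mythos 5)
Your verification of hypotheses (1) and (2) is correct and follows the same route the paper's exercise hint indicates; in particular the cancellation of the $(z_i-qz_{i+1})$ pole followed by the antisymmetry argument after bringing the $z_i$ contour onto the $z_{i+1}$ contour is exactly right. The case $n_k=0$ of hypothesis (3) is also correct (and it covers all $\vec n\in W^k_{\geq 0}$ having some zero coordinate, by the ordering).

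The gap is in hypothesis (3) when all $n_i\geq 1$. You chose to send the \emph{innermost} contour $z_k$ to infinity. Because the nesting places all the cross-poles $z_k = z_A/q$ (for $A<k$) \emph{outside} the $z_k$ contour, this deformation sweeps across every one of them, and you are left having to show that the $k-1$ cross-residue terms sum to $1-q^{k-1}$ after the remaining integrations. You explicitly flag this as the ``main technical obstacle'' and only gesture at two possible routes; neither is carried out, and even in the $k=2$ illustration the bookkeeping is off --- the new pole the cross-residue creates at $z_1=q$ lies \emph{inside} $\gamma_1$, so the contribution $1-q$ actually comes from closing outward and taking the single residue at $z_1=0$ (equivalently, the sum of the residues at $z_1=q$ and $z_1=1$, not the $z_1=q$ residue alone). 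So the argument does not close.

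The intended route, which is the ``clear generalization of $k=1$'' the paper alludes to, is to deform the \emph{outermost} contour $z_1$ to infinity instead. As $z_1\to\infty$ the integrand decays like $z_1^{-n_1-1}$, hence at least like $z_1^{-2}$, so there is no residue at infinity. The poles in $z_1$ are at $z_1=1$, $z_1=qz_B$ for $B>1$, and $z_1=0$; by the nesting the first two families already lie inside $\gamma_1$, so the expansion crosses \emph{only} $z_1=0$. There, each factor $(z_1-z_B)/(z_1-qz_B)$ evaluates to $q^{-1}$ and $u_{z_1}(0;n_1)/z_1$ has residue $1$, producing a clean factor $q^{-(k-1)}$. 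The prefactor ratio
$$
\frac{(-1)^k q^{k(k-1)/2}}{(2\pi i)^k}\cdot(-2\pi i)\cdot q^{-(k-1)} = \frac{(-1)^{k-1} q^{(k-1)(k-2)/2}}{(2\pi i)^{k-1}}
$$
is exactly the $(k-1)$-particle prefactor, so $u_k(0;\vec n)$ reduces to $u_{k-1}(0;n_2,\ldots,n_k)$, which equals $1$ by induction, with no cross-residue identity to prove. Swapping the order of contour deformation is the missing idea.
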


\begin{figure}[ht]
\begin{center}
\includegraphics[scale=1.5]{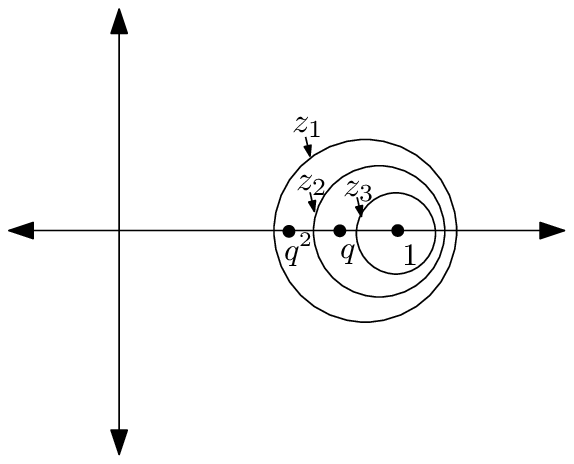}
\end{center}
\caption{Nested contours for Theorem \ref{thmgenmom}. \index{Nested contour integral}%
}
\end{figure}

\begin{corollary}
For all $\vec{n}\in W_{\geq 0}^k$,
\begin{align*}
&\E^{\text{step}}\left[\prod_{i=1}^k q^{x_{n_i}(t)+n_i}\right]
= u(t;\vec{n})\\
&\qquad= \frac{(-1)^k q^{k(k-1)/2}}{(2\pi i)^k} \oint\cdots\oint \prod_{1\leq A<B\leq k}\frac{z_A-z_B}{z_A-qz_B} \prod_{j=1}^k \frac{e^{(q-1)tz_j}}{(1-z_j)^{n_j}} \frac{dz_j}{z_j}.
\end{align*}
\end{corollary}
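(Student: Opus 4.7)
The Corollary will follow immediately from Theorem \ref{thmgenmom} combined with Propositions \ref{partA} and \ref{partB}. Indeed, for step initial data $x_i(0) = -i$ one has $H(\vec x(0), \vec y(\vec n)) = \prod_i q^{x_{n_i}(0) + n_i} = 1$ when all $n_i \geq 1$, and $0$ when some $n_i = 0$ (by the convention $H = 0$ if $y_0 > 0$). So $h_0(\vec n) = \prod_i \mathbf{1}_{n_i > 0}$, and Propositions \ref{partA}, \ref{partB} identify the expectation with $u(t; \vec n)$. The real task is thus to prove Theorem \ref{thmgenmom}, i.e.\ to verify that the stated nested contour integral satisfies the three conditions of Proposition \ref{partB} with this choice of $h_0$.

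\emph{Step 1 (free evolution).} A direct computation gives $\tfrac{d}{dt} u_z(t;n) = (q-1)z \, u_z(t;n)$ and $\nabla u_z(t;n) = -z \, u_z(t;n)$, so each one-variable factor satisfies $\partial_t u_{z_j} = (1-q)\nabla_j u_{z_j}$. By the product rule the full integrand satisfies the $k$-body free evolution equation pointwise. Since the contours are compact and the prefactor and Bethe factor are independent of $(t,\vec n)$, one may differentiate under the integrals.

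\emph{Step 2 (two-body boundary condition).} Because $\nabla_j$ acts on $u_{z_j}$ by multiplication by $-z_j$, the operator $(\nabla_i - q\nabla_{i+1})$ pulls out the scalar $-(z_i - qz_{i+1})$ from the integrand, which exactly cancels the denominator of the single Bethe factor $(z_i - z_{i+1})/(z_i - qz_{i+1})$. When $n_i = n_{i+1}$ the product $u_{z_i}u_{z_{i+1}}$ is symmetric in $(z_i, z_{i+1})$, so after cancellation the integrand is symmetric in $(z_i, z_{i+1})$ except for the leftover numerator $(z_i - z_{i+1})$, which is antisymmetric. Since no pole at $z_i = qz_{i+1}$ survives and no Bethe denominator $(z_{i+1} - qz_i)$ appears, the $z_i$-contour may be shrunk and the $z_{i+1}$-contour enlarged onto a common contour without crossing any additional poles (the other potential crossings, $z_{i+1} = z_A/q$ for $A < i$, remain outside because the outer contours sit at larger modulus). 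The integral then vanishes by the $z_i \leftrightarrow z_{i+1}$ antisymmetry on a symmetric contour.

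\emph{Step 3 (initial condition).} At $t = 0$ and $\vec n \in W_{\geq 0}^k$ one must check the integral equals $\mathbf{1}_{n_k > 0}$. If $n_k = 0$, the innermost $z_k$-contour encloses no pole (the factor $(1-z_k)^{-n_k}$ is trivial, the pole at $z_k = 0$ lies outside, and the Bethe poles $z_k = z_A/q$ lie outside because the outer contours are at larger modulus), so the integral is zero. If instead all $n_j > 0$ one proceeds by induction on $k$: the base case $k=1$ is the residue computation $-\tfrac{1}{2\pi i}\oint \tfrac{dz}{(1-z)^n z} = 1$ at $z=1$; the induction step peels off the outermost $z_1$-contour and writes it as the residue at $z_1 = 1$ plus residues at $z_1 = qz_B$ for $B > 1$. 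After accounting for the prefactor $(-1)^k q^{k(k-1)/2}$ and the remaining Bethe-factor structure, the $z_1 = qz_B$ residues reduce to lower-dimensional contour integrals of the same form, and telescope to $1$ via the induction hypothesis.

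\emph{Main obstacle.} I expect Step 3 to be the most intricate, since the iterated residue calculation generates many terms and showing they collapse to $1$ requires careful bookkeeping of the Bethe factor's residue structure (and of the $q$-powers produced by residues at $z_1 = q z_B$). Step 2, though conceptually clean, also demands care in tracking which poles survive after the Bethe-factor cancellation in order to justify the contour deformation to a common contour.
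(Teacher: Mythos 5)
Your reduction of the Corollary to Theorem \ref{thmgenmom} combined with Propositions \ref{partA} and \ref{partB} is exactly the paper's, and Steps 1 and 2 of your Theorem \ref{thmgenmom} sketch follow the paper's hints closely. For Step 3, however, the paper's $k=1$ argument points at a cleaner route than the inward residue expansion you propose: expand the $z_1$-contour outward to infinity. Since $n_1\geq 1$ the integrand decays at least like $z_1^{-2}$, so there is no residue at infinity; the only pole crossed is $z_1=0$, where each Bethe factor $\tfrac{z_1-z_B}{z_1-qz_B}$ evaluates to $q^{-1}$, producing a factor $q^{-(k-1)}$ which (together with the sign from the outward deformation) converts the prefactor $(-1)^k q^{k(k-1)/2}$ exactly into $(-1)^{k-1}q^{(k-1)(k-2)/2}$ and leaves a $(k-1)$-dimensional integral of the same form in $z_2,\dots,z_k$ — a clean induction. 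By contrast, your inward decomposition (residue at $z_1=1$ plus residues at each $z_1=qz_B$) breaks the self-similar structure: the $z_1=qz_B$ residue introduces a factor $(1-qz_B)^{-n_1}$ absent from the original integrand, and the $z_1=1$ residue is at a pole of order $n_1$, so the terms produced are not lower-dimensional integrals of the same form. The sum of course agrees with the outward expansion by Cauchy's theorem, but the telescoping you anticipate is not a matter of careful bookkeeping so much as a structural mismatch that the outward expansion sidesteps; I would reorganize Step 3 accordingly.
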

Since $q\in (0,1)$ and $x_n(t)+n\geq 0,$ the observable $q^{x_n(t)+n}\in (0,1].$ Hence, knowledge of the joint moments of $\{q^{x_n(t)+n}\}_{n\in \{1,\ldots,N\}}$ for fixed $t$ completely characterizes their distribution, and hence also the distribution of $\{x_n(t)\}_{n\in \{1,\ldots,N\}}!$ Later, we will see how to extract some useful distributional formulas from these moments.

\begin{exercise}
Prove Theorem \ref{thmgenmom}. Parts (1) and (3) clearly generalize the $k=1$ case. For (2) study the effect of applying $\nabla_i-q\nabla_{i+1}$ to integrand, and show integral of result is $0$.
\end{exercise}
In the remaining part of this section I will study how $q$--TASEP and its duality and moment formulas behave as $q\nearrow 1.$ Doing so, we will encounter a semi--discrete stochastic heat equation 
\index{Semi-discrete stochastic heat equation (SHE)}%
\index{Stochastic heat equation (SHE)!semi-discrete}%
or equivalently the O'Connell--Yor semi--discrete directed polymer model. In our next section we will return to $q$--TASEP and study some distributional properties.

From duality we know that the $q$--TASEP dynamics satisfy (for some Martingale $M_n(t)$)
\begin{align*}
dq^{x_n(t)+n} &= (1-q)\nabla q^{x_n(t)+n}dt + q^{x_n(t)+n}dM_n(t)\\
q^{x_n(0)+n}&\equiv \mathbf{1}_{n\geq 1} \text{ (for step initial data)}.
\end{align*}
This suggests that as $q\nearrow 1$ the (properly scaled) observable $q^{x_n(t)+n}$ might converge to the semi--discrete SHE.

\begin{definition}
A function $z:\mathbb{R}_{\geq 0}\times \mathbb{Z}_{\geq 0}\rightarrow \R_{\geq 0}$ solves the semi--discrete SHE with initial data $z_0:\Z_{\geq 0}\rightarrow \R_{\geq 0}$ if:
\begin{align*}
dz(\tau;n) &= \nabla z(\tau;n)d\tau + z(\tau;n)dB_n(\tau) \\
z(0;n) &= z_0(n).
\end{align*}
\end{definition}
\begin{theorem}[\cite{BorCor}]\label{thmlimits}
For $q$--TASEP with step initial data, set
$$
q=e^{-\epsilon}, \qquad t=\epsilon^{-2}\tau, \qquad x_n(t) = \epsilon^{-2}\tau - (n-1)\epsilon^{-1}\log\epsilon^{-1}-\epsilon^{-1}\mathcal{F}_{\epsilon}(\tau;n)
$$
(this third equation above defines $\mathcal{F}_{\epsilon}$) and call $z_{\epsilon}(\tau;n) = \exp\big\{-3\tau/2 + \mathcal{F}_{\epsilon}(\tau;n)\big\}.$ Then as a space time process $z_{\epsilon}(\tau;n)$ converges weakly to $z(\tau;n)$ with $z_0(n)=\mathbf{1}_{n=1}$ initial data.
\end{theorem}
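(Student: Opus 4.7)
My plan is to reduce the space-time weak convergence to (a) finite-dimensional convergence in distribution, obtained via convergence of joint moments, and (b) tightness of the family $\{z_\epsilon\}_{\epsilon>0}$.

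For the moment step, I would start with the explicit contour integral formula for $\E^{\text{step}}\bigl[\prod_{i=1}^k q^{x_{n_i}(t)+n_i}\bigr]$ from the corollary to Theorem \ref{thmgenmom}, and perform the change of variables $z_j = 1-\epsilon w_j$, so the contours (which originally enclosed $1$ with the nesting $z_A \supset qz_B$) become small contours around $w_j=0$ with nesting $w_A \supset w_B - \epsilon^{-1}(1-q) \to w_A \supset w_B - 1$ as $\epsilon \to 0$. Under the stated scaling $q=e^{-\epsilon}$, $t=\epsilon^{-2}\tau$, Taylor expansion gives $(q-1)tz_j = -\epsilon^{-1}\tau + \tau w_j + \tfrac{\tau}{2} + O(\epsilon)$ and $(1-z_j)^{-n_j} = \epsilon^{-n_j}w_j^{-n_j}$, while $\prod_{A<B}(z_A-z_B)/(z_A-qz_B) \to \prod_{A<B}(w_A-w_B)/(w_A-w_B-1)$. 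The powers of $\epsilon$ and the exponential prefactors combine with the definition $z_\epsilon(\tau;n)= \epsilon^{n-1}e^{\epsilon^{-1}\tau + \epsilon n - 3\tau/2} q^{x_n(t)+n}$ exactly, so that
\[
\lim_{\epsilon \to 0} \E\Bigl[\prod_{i=1}^k z_\epsilon(\tau;n_i)\Bigr] = \frac{1}{(2\pi i)^k} \oint\cdots\oint \prod_{A<B}\frac{w_A-w_B}{w_A-w_B-1} \prod_{j=1}^k \frac{e^{\tau w_j}}{w_j^{n_j}}\, dw_j,
\]
with nested small contours around $0$. Uniform bounds on fixed contours justify the exchange of limit and integration by dominated convergence.

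Next I would identify the right-hand side as the $k$-point moment of the semi-discrete SHE with $z_0(n)=\mathbf{1}_{n=1}$. By It\^o's formula, $\bar z(\tau;\vec n) := \E[\prod_i z(\tau;n_i)]$ satisfies a closed linear ODE system on $W_{\ge 0}^k$: free evolution by $\sum_i \nabla_i$ together with an extra $+1$ whenever two coordinates coincide. Running a coordinate Bethe ansatz argument completely analogous to Proposition \ref{partB} (but now with continuous-time free operator $\nabla$ and two-body boundary condition of the form $(\nabla_i - \nabla_{i+1} - 1)\bar z|_{n_i=n_{i+1}}=0$) reduces the problem to superposing fundamental solutions $e^{\tau w} w^{-n}$, and one checks that the above contour integral solves the free evolution, satisfies the boundary condition because the pole $w_A - w_B = 1$ annihilates the relevant combination, and reduces at $\tau=0$ to $\prod_i \mathbf{1}_{n_i=1}$ upon pushing contours to infinity.

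To pass from moment convergence to distributional convergence, I would bound the $k$-th moment of $z(\tau;n)$ by $C^k e^{Ck^2}$ via steepest descent on the limit integral. This satisfies Carleman's condition, so the joint distribution of $(z(\tau;n_i))_i$ is determined by its moments, and hence the finite-dimensional distributions of $z_\epsilon$ converge to those of $z$.

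The hard part, and main obstacle, will be tightness for space-time weak convergence: the spatial coordinate $n$ is discrete so only temporal regularity must be controlled, but the jumps of $q$-TASEP are of size $\epsilon^{-1}\log\epsilon^{-1}$ on the $\mathcal F_\epsilon$-scale, so $z_\epsilon(\cdot;n)$ is not continuous for fixed $\epsilon$, and one must work in Skorokhod $D([0,T],\R)$ or use a suitable smoothing. I would use the exact contour integrals for pairwise differences $\E[(z_\epsilon(\tau_1;n)-z_\epsilon(\tau_2;n))^{2p}]$, extracted by expanding $(e^{\tau_1 w}-e^{\tau_2 w})$ inside the integrand, to obtain bounds of the order $|\tau_1-\tau_2|^p$ for all $p$, and then invoke a Kolmogorov-Chentsov criterion on each finite family $\{n\in\{1,\dots,N\}\}$ to deduce tightness. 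Combining the tightness with the finite-dimensional convergence established above completes the proof.
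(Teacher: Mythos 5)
Your overall strategy is genuinely different from the paper's, which gives a heuristic SDE/martingale argument: it identifies the $q$--TASEP jump rates with the drift and noise terms of the semi--discrete SHE, uses convergence of the compensated Poisson jumps to Brownian motion, and then applies It\^o's formula to $e^{\mathcal{F}_\epsilon}$ to recover the SHE in the limit. You instead try to run everything through the contour integral moment formulas. Your computation of the $\epsilon\to 0$ limit of the moments is correct (and essentially matches what the paper does separately, right after the theorem, using $z=e^{-\epsilon\tilde z}$ rather than $z_j=1-\epsilon w_j$), and the identification of the limiting integrals as $\bar z(\tau;\vec n)$ via the semi--discrete delta Bose gas is also the paper's argument in Section \ref{L6}.

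However, there is a fatal gap in your step from moment convergence to distributional convergence. You claim the moment bound $\mathbb{E}[z(\tau;n)^k]\le C^k e^{Ck^2}$ "satisfies Carleman's condition, so the joint distribution is determined by its moments." This is false: Carleman's condition requires $\sum_k m_{2k}^{-1/(2k)}=\infty$, and with $m_{2k}\sim e^{4Ck^2}$ one has $m_{2k}^{-1/(2k)}\sim e^{-2Ck}$, which is summable. In fact the paper is at pains to emphasize exactly this failure -- it is the entire reason the $q$-discretization and the $e_q$-Laplace transform are introduced. The Lyapunov exponent computation in Section \ref{L7} gives $\gamma_k\sim k^2/2$, and the paper explicitly remarks ``the $k$--moments grow like $e^{ck^2}$ so they do not determine the distribution of $z$.'' So moment convergence plus tightness does not pin down the limit: an indeterminate limiting moment sequence leaves room for distinct subsequential weak limits with identical moments. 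To rescue a moment-based approach you would need to work instead with the bounded observables $q^{x_n(t)+n}\in(0,1]$, whose joint $q$-moments do determine their law, pass those (e.g.\ the $e_q$-Laplace transform Fredholm determinant) to the $q\nearrow 1$ limit, and identify the resulting Laplace-type transform with that of $z(\tau;n)$ -- which is closer to what Sections \ref{L8} accomplish but is considerably heavier machinery than the direct SDE argument the paper gives.

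A smaller issue: the jumps of $\mathcal{F}_\epsilon$ are of size $\epsilon$, not $\epsilon^{-1}\log\epsilon^{-1}$ (that factor is a deterministic shift, not a jump amplitude), so the tightness problem is less severe than you suggest. Still, your claim that one can get bounds of order $|\tau_1-\tau_2|^p$ for all $p$ would imply smoothness and cannot literally hold for a jump process at fixed $\epsilon$; one would need either Skorokhod topology estimates or an argument that the jump contribution vanishes uniformly.
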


\begin{proof}[Heuristic proof sketch]
The initial data is clear since $z_{\epsilon}(0;n) = \epsilon^{n-1}e^{\epsilon n} \rightarrow \mathbf{1}_{n=1}.$

As for the dynamics, observe that for $d\tau$ small,
\begin{align*}
d\mathcal{F}_{\epsilon}(\tau;n) &\approx \mathcal{F}_{\epsilon}(\tau;n) - \mathcal{F}_{\epsilon}(\tau-d\tau;n)\\
&= \epsilon^{-1}d\tau - \epsilon\big(x_n(\epsilon^{-2}\tau) - x_n(\epsilon^{-2}\tau - \epsilon^{-2}d\tau)\big)
\end{align*}
Under scaling, the $q$--TASEP jump rates are given by
$$
1-q^{x_{n-1}(t)-x_n(t)-1} = 1 -\ep e^{\mathcal{F}_{\ep}(\tau;n-1)-\mathcal{F}_{\ep}(\tau;n)} + O(\ep^2).
$$
So in time $\ep^{-2}d\tau,$ by convergence of Poisson point process to Brownian motion
\begin{align*}
&\ep\big(x_n(\ep^{-2}\tau) - x_n(\ep^{-2}\tau - \ep^{-2}d\tau)\big) \\
&\qquad\qquad\approx \ep^{-1}d\tau - e^{\mathcal{F}_{\ep}(\tau;n-1) - \mathcal{F}_{\ep}(\tau;n)}d\tau - (B_n(\tau)-B_n(\tau-d\tau)).
\end{align*}
Thus we see that (using little oh notation)
$$
d\mathcal{F}_{\ep}(\tau;n) \approx e^{\mathcal{F}_{\ep}(\tau;n-1) - \mathcal{F}_{\ep}(\tau;n)}d\tau + dB_n(\tau) + o(1)
$$
Exponentiating and applying It\^{o}'s lemma gives
$$
de^{\mathcal{F}_{\ep}(\tau;n) } = \left( \frac{1}{2}e^{\mathcal{F}_{\ep}(\tau;n) } + e^{\mathcal{F}_{\ep}(\tau;n-1) }\right)d\tau + e^{\mathcal{F}_{\ep}(\tau;n) }dB_n(\tau) + o(1)
$$
or going to $z_{\ep},$
$$
dz_{\ep}(\tau;n) = \nabla z_{\ep}(\tau;n)d\tau + z_{\ep}(\tau;n)dB_n(\tau)  + o(1)
$$
and as $\ep\searrow 0$ we recover the semi--discrete SHE.
\end{proof}

Theorem \ref{thmlimits} implies that for $q=e^{-\ep},$
$$
q^{x_n(\ep^{-2}\tau)}e^{\ep^{-1}\tau}\ep^{n-1}e^{-3\tau/2} \longrightarrow z(\tau;n).
$$
Though not shown in the proof, it is quite reasonable to imagine (and perhaps prove, without too much additional work) moments converge as well. We will check this (somewhat indirectly) in two steps. First, we will take limits of our known $q$--TASEP moments. Second, we will compute directly the semi--discrete SHE moments. Consider
\begin{multline}\label{Star3}
\E^{\text{step}}\left[ \prod_{i=1}^k q^{x_n(\ep^{-2}\tau)} e^{\ep^{-1}\tau}\ep^{n_i-1} e^{-3\tau/2}\right] \\
= \frac{(-1)^k q^{k(k-1)/2}}{(2\pi i)^k} \oint\cdots\oint \prod_{1\leq A<B\leq k} \frac{z_A-z_B}{z_A-qz_B} \prod_{j=1}^k \frac{e^{(q-1)\ep^{-2}\tau z_j + \ep^{-1}\tau-3\tau/2}}{(\ep^{-1}(1-z_j))^{n_j}} \frac{\ep^{-1}dz_j}{dz_j}.
\end{multline}
with  $q=e^{-\ep}$.  Change variables $z=e^{-\ep \tilde{z}},$ then as $\ep\searrow 0$
\begin{align*}
\frac{z_A-z_B}{z_A-qz_B} &\rightarrow \frac{\tilde{z}_A - \tilde{z}_B}{\tilde{z}_A-\tilde{z}_B-1},\\
\frac{1}{(\ep^{-1}(1-z_j))^{n_j}} &\rightarrow \frac{1}{(\tilde{z}_j)^{n_j}},\\
\frac{\ep^{-1}dz_j}{z_j} &\rightarrow -d\tilde{z}_j ,\\
(q-1)\ep^{-2}\tau z_j + \ep^{-1}\tau - 3\frac{\tau}{2}  &\rightarrow \tau (\tilde{z}_j-1)
\end{align*}
and hence
$$
\lim_{\ep\searrow 0} \eqref{Star3} = \frac{1}{(2\pi i)^k} \oint\cdots\oint \prod_{1\leq A<B<\leq k} \frac{z_A-z_B}{z_A-z_B-1} \prod_{j=1}^k \frac{e^{\tau (z_j-1)}}{z_j^{n_j}}dz_j,
$$
with the $z_A$ contour containing $\{z_B+1\}_{B>A}$ and $0.$

\begin{figure}[ht]
\begin{center}
\includegraphics[scale=1]{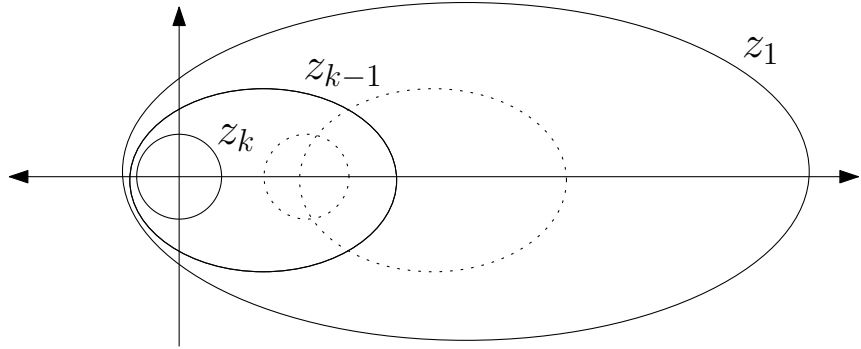}
\end{center}
\caption{Nested contours \index{Nested contour integral}%
for semi-discrete SHE moment formulas. The dashed contours represent the image under addition by 1 of the $z_k$ and $z_{k-1}$ contours. Notice that the $z_{k-1}$ contour includes both the $z_k$ contour and its image under addition by 1. This is the nature of the nesting.}
\end{figure}

We will now check that
$$
\lim_{\ep\searrow 0} \eqref{Star3}= \E\left[\prod_{j=1} z(\tau;n_j)\right].
$$
This is done through a limit version of the duality for $q$--TASEP. This now goes by the name of the polymer replica method. \index{Replica method}%
For this we should recall the Feymann--Kac representation, as briefly do now.

Consider a homogeneous Markov process generator $L$ and deterministic potential $v$. We will provide a path integral or directed polymer interpretation for the solution to
\begin{align*}
\frac{d}{dt} z(t,x) &= (Lz)(t,x) + v(t,x)z(t,x) \\
z(0,x) &= z_0(x).
\end{align*}
Let $\varphi(\cdot)$ be the Markov process with generator $L$ run backwards in time from $t$ to $0$ and let $\mathcal{E}^{t,x}$ be the associated expectation operator. For example, for $L=\nabla,$ trajectories of $\varphi$ look like in Figure \ref{figFKone}.

\begin{figure}[ht]
\begin{center}
\includegraphics[scale=.7]{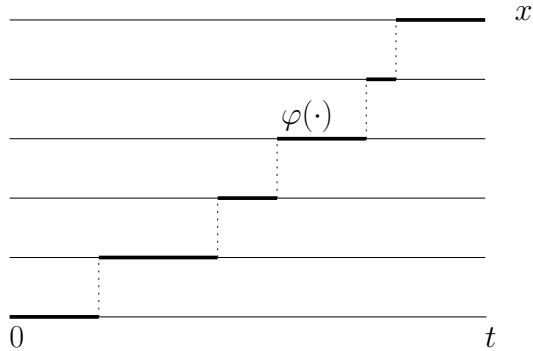}
\end{center}
\caption{A possible trajectory in the Feynman-Kac representation.}\label{figFKone}
\end{figure}

Let $p(t,x)=\mathcal{E}^{t,x}\left[\mathbf{1}_{\varphi(0)=0}\right]$ be the heat kernel for $L.$

For $v\equiv 0,$ by superposition / linearity of expectation we have
$$
z(t,x) = \mathcal{E}^{t,x}\left[z_0(\varphi(0))\right].
$$
When $v$ is turned on, Duhamel's principle allows us to write
$$
z(t,x) = \int_{\R} p(t,x-y)z_0(y)d\mu(y) + \int_0^t ds\int_{\R} d\mu(y) p(t-s,y-x) z(s,y)v(s,y),
$$
where $\mu$ is measure on the state space (e.g. Lebesgue or counting measure). This identity can be applied repeatedly to yield an infinite series (which is convergent under mild hypothesis) and which can be identified with
$$
z(t,x) = \mathcal{E}^{t,x}\left[z_0(\varphi(0))\cdot \exp\left\{ \int_0^t v(s,\varphi(s))d\mu(s)\right\}\right],
$$
which is called the Feynman-Kac representation. \index{Feynman-Kac representation}%

\begin{exercise}
Prove the Feynman-Kac representation using chaos series in $v$.
\end{exercise}

When $v$ is random (Gaussian) care is needed, as we must deal with stochastic integrals. This leads to something called the Wick or Girsanov correction. For $z(\tau;n)$ with $v(\tau;n)d\tau=dB_n(\tau)$ and $L=\nabla$ this yields
$$
z(\tau;n) = \mathcal{E}^{t,x}\left[z_0(\varphi(0))\exp\left\{ \int_0^{\tau} dB_{\varphi(s)}(s) - \frac{ds}{2} \right\}\right],
$$
which is called the O'Connell Yor polymer partition function. The study of general directed polymer models is quite interesting but will be too far off--topic.

\begin{figure}[ht]
\begin{center}
\includegraphics[scale=.7]{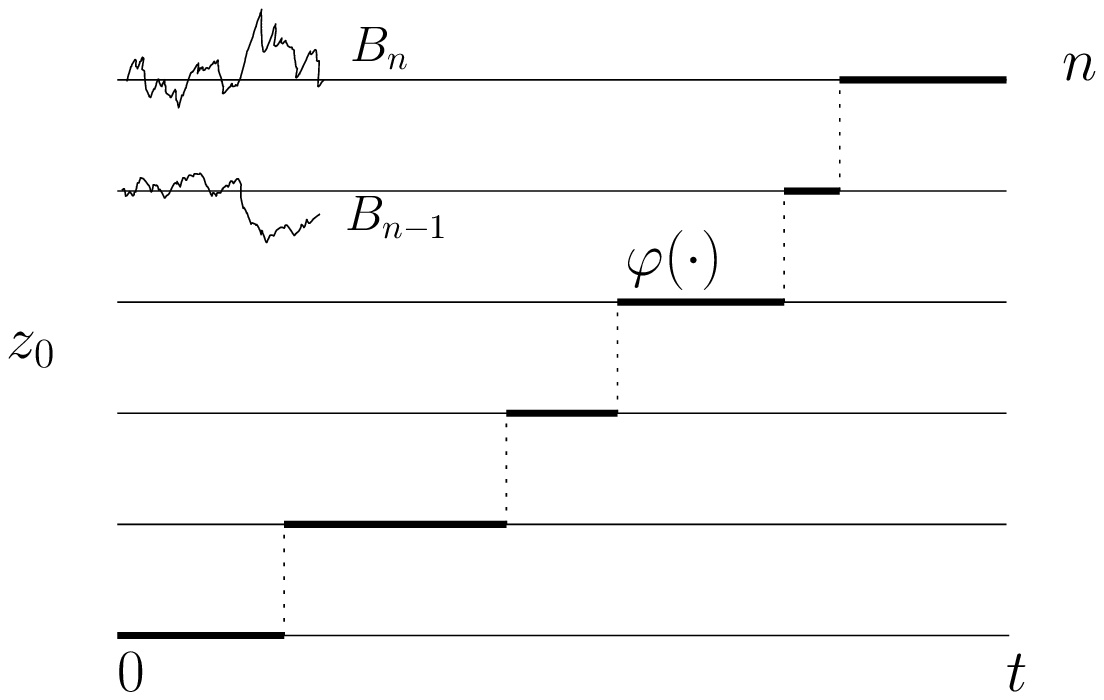}
\end{center}
\caption{The O'Connell Yor polymer partition function.}
\end{figure}

Define $\bar{z}(\tau;\vec{n})=\E\left[\prod_{i=1}^k z(\tau;n_i)\right]$ then applying the Feynman-Kac representation to each $z(\tau;n_i)$ with $\varphi_i(\cdot)$ and $\mathcal{E}_i$ associated to $z_i,$ we find
$$
\bar{z}(\tau;\vec{n})= \E\left[\prod_{i=1}^k \mathcal{E}_i^{\tau,n_i}\left[ z_0(\varphi_i(0))\cdot \exp\left\{\int_0^{\tau} dB_{\varphi_i(s)}(s)-\frac{ds}{2}\right\}\right]\right].
$$
By interchanging the $\E$ and $\mathcal{E}_1^{\tau,n_1}\cdots\mathcal{E}_k^{\tau,n_k}$ we get
$$
\bar{z}(\tau;\vec{n})= \mathcal{E}_1^{\tau,n_1}\cdots\mathcal{E}_k^{\tau,n_k} \left[\prod_{i=1}^k  z_0(\varphi_i(0)) \cdot \E\left[ \exp\left\{\int_0^t \sum_{i=1}^k dB_{\varphi_i(s)}(s)-\frac{ds}{2}\right\}\right]\right].
$$
Using $\E[e^{kX}]=e^{k^2\sigma^2/2}$ for $X\sim \mathcal{N}(0,\sigma^2)$ we find:
\begin{exercise}
$$
\E\left[ \exp\left\{\int_0^t \sum_{i=1}^k dB_{\varphi_i(s)}(s)-\frac{ds}{2}\right\}\right] = \exp\left\{\int_0^t \sum_{1\leq i<j\leq k} \mathbf{1}_{\varphi_i(s)=\varphi_j(s)} ds\right\}
$$
or in other words, the exponential of the pair local overlap time.
\end{exercise}
Thus
$$
\bar{z}(\tau;\vec{n}) = \mathcal{E}^{\tau;\vec{n}}\left[ \prod_{i=1}^k z_0(\varphi_i(0)) \exp\left\{ \int_0^{\tau} \sum_{1\leq i<j\leq k} \mathbf{1}_{\varphi_i(s)=\varphi_j(s)}ds \right\}\right].
$$
\begin{exercise}
Applying Feynman-Kac again shows that $z(\tau;\vec{n})$ with $\tau\in\R_{\geq 0},\vec{n}\in \Z_{>0}^k$ is the unique solution to the semi--discrete delta Bose gas:
\begin{align*}
\frac{d}{dt}\bar{z}(\tau;\vec{n}) &= \left(\sum_{i=1}^k \nabla_i + \sum_{1\leq i<j\leq k} \mathbf{1}_{n_i=n_j} \right) \bar{z}(\tau;\vec{n}),\\
\bar{z}(0;\vec{n}) &= \prod_{i=1}^k z_0(n_i).
\end{align*}
\end{exercise}
Notice that the above system involves functions that are symmetric in $n_1,\ldots,n_k$ whereas the nested contour integral formula we computed above for $\bar{z}(\tau;\vec{n})$ is only valid on $n_1\geq n_2\geq\ldots \geq n_k,$ and clearly not symmetric. It is possible to check that extending the integral formula symmetrically we get a solution to the above system. Alternatively we can show how the above delta Bose gas can be rewritten in terms of a free evolution equation with boundary conditions.

\begin{proposition}[\cite{BCS}]
If $u:\R_{\geq 0}\times \Z_{\geq 0}^k\rightarrow \R_{\geq 0}$ solves

(1) For all $\vec{n}\in (\Z_{\geq 0})^k$ and $\tau\in \R_{\geq 0}$,
$$
\frac{d}{dt} u(\tau;\vec{n}) = \sum_{i=1}^k \nabla_i u(\tau;\vec{n}).
$$

(2) For all $\vec{n}\in \Z_{\geq 0}^k$ such that for some $i \in \{1,\ldots,k-1\}, n_i=n_{i+1},$
$$
(\nabla_i-\nabla_{i+1}-1)u(\tau;\vec{n})=0.
$$

(3) For all $\vec{n}\in W_{\geq 0}^k$, $u(0;\vec{n})=\bar{z}_0(\vec{n}).$

Then for all $\vec{n}\in W_{\geq 0}^k,$ $u(\tau;\vec{n})=\bar{z}(\tau;\vec{n})$.
\end{proposition}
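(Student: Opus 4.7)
The plan mirrors the proof of Proposition \ref{partB} for $q$-TASEP: I will show that $u|_{W_{\geq 0}^k}$ satisfies the same closed, triangular linear system of ODEs as $\bar{z}|_{W_{\geq 0}^k}$, and then invoke standard uniqueness for linear ODEs together with the matching initial data in (3).

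Fix $\vec{n}\in W_{\geq 0}^k$ and decompose its coordinates into maximal blocks of coincident values $[a_\ell, b_\ell]$ of sizes $c_\ell := b_\ell - a_\ell + 1$. Within the $\ell$-th block, the boundary condition (2) reads $\nabla_i u = \nabla_{i+1} u + u$ for $a_\ell \le i \le b_\ell - 1$; iterating downward from $i = b_\ell - 1$ yields $\nabla_{a_\ell + j} u(\tau; \vec{n}) = \nabla_{b_\ell} u(\tau; \vec{n}) + (c_\ell - 1 - j)\, u(\tau; \vec{n})$ for $0 \le j \le c_\ell - 1$. Summing over $j$ gives
\[
\sum_{i = a_\ell}^{b_\ell} \nabla_i u(\tau; \vec{n}) \;=\; c_\ell \nabla_{b_\ell} u(\tau; \vec{n}) \;+\; \binom{c_\ell}{2}\, u(\tau; \vec{n}),
\]
and summing over all blocks, together with $\sum_\ell \binom{c_\ell}{2} = \sum_{1 \le i < j \le k} \mathbf{1}_{n_i = n_j}$ (pairs are counted within blocks), converts the free evolution (1) at $\vec{n}$ into
\[
\frac{d}{d\tau} u(\tau; \vec{n}) \;=\; \sum_\ell c_\ell \nabla_{b_\ell} u(\tau; \vec{n}) \;+\; \sum_{1 \le i < j \le k} \mathbf{1}_{n_i = n_j}\, u(\tau; \vec{n}).
\]
By maximality of the blocks, $n_{b_\ell} > n_{b_\ell + 1}$ (or $b_\ell = k$), so each $\vec{n} - e_{b_\ell} \in W_{\geq 0}^k$ and the right-hand side only references values of $u(\tau;\cdot)$ on $W_{\geq 0}^k$.

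The identical block reduction applies to $\bar{z}$. Because $\bar{z}(\tau;\vec{n})=\E[\prod_i z(\tau; n_i)]$ is symmetric in its spatial coordinates, $\bar{z}(\tau;\vec{n} - e_i) = \bar{z}(\tau;\vec{n} - e_{b_\ell})$ for every $i \in [a_\ell,b_\ell]$ (both vectors agree after reordering), and hence $\sum_{i=1}^k \nabla_i \bar{z}(\tau;\vec{n}) = \sum_\ell c_\ell \nabla_{b_\ell} \bar{z}(\tau;\vec{n})$ on $W_{\geq 0}^k$. Combined with the diagonal term already present in the delta Bose gas, this shows that $\bar{z}|_{W_{\geq 0}^k}$ satisfies the very same ODE as $u|_{W_{\geq 0}^k}$. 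The system is triangular with respect to the total level $|\vec{n}| := n_1 + \cdots + n_k$, since each $\nabla_{b_\ell}$ strictly decreases $|\vec{n}|$ while the diagonal factor preserves it. Standard uniqueness for finite linear ODE systems (cf.\ \cite{Cod}), applied inductively on $|\vec{n}|$ together with the common initial data in (3), then forces $u = \bar{z}$ on $W_{\geq 0}^k$.

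The sole genuine obstacle, modest as it is, lies in the block identity: iterating the two-body Bethe boundary condition across a cluster of size $c \ge 3$ must produce precisely the coefficient $\binom{c}{2}$ in front of the diagonal term---no more and no less. This is the coordinate Bethe ansatz miracle that two-body interactions encode the full many-body delta potential, and the telescoping induction above is exactly what makes it precise.
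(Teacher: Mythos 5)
Your proof is correct, and it is precisely the argument the paper leaves implicit: the exercise following Proposition \ref{partB} hints at exactly this block reduction ($\sum_{i=1}^c(1-q)\nabla_i \to (1-q^c)\nabla_c$), of which your identity $\sum_{i=a_\ell}^{b_\ell}\nabla_i u = c_\ell\nabla_{b_\ell}u + \binom{c_\ell}{2}u$ is the $q\nearrow 1$ analogue, and the uniqueness step mirrors the triangular-ODE argument in the proof of Proposition \ref{partA}. Your observation that the symmetry of $\bar z$ gives $\sum_i\nabla_i\bar z=\sum_\ell c_\ell\nabla_{b_\ell}\bar z$ on $W_{\geq 0}^k$ is the right way to close the loop, so both $u$ and $\bar z$ restricted to $W_{\geq 0}^k$ solve the same closed, level-by-level triangular system with common initial data.
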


\begin{exercise}
Prove this proposition.
\end{exercise}

\begin{exercise}
Check our limiting integral formulas satisfy proposition, hence proving that their restriction to $\vec{n}\in W_{\geq 0}^k$ gives $\bar{z}(\tau;\vec{n}).$
\end{exercise}

\begin{exercise}
Reverse engineer the proof of the proposition in order to produce a delta Bose gas at the $q$--Boson level.
\end{exercise}

\section{Unnesting the contours and moment asymptotics}\label{L7}
In the last two sections we proved moment formulas for step initial data $q$--TASEP and $\mathbf{1}_{n=1}$ initial data semi--discrete SHE: 
\index{Semi-discrete stochastic heat equation (SHE)}%
\index{Stochastic heat equation (SHE)!semi-discrete}%
For $n_1\geq \ldots \geq n_k>0$
\begin{align*}
\mathbb{E}\left[ \prod_{j=1}^k q^{x_{n_j}(t)+n_j}\right] &= \frac{(-1)^k q^{k(k-1)/2}}{(2\pi i)^k} \oint\cdots\oint \prod_{1\leq A<B\leq k} \frac{z_A-z_B}{z_A-qz_B} \prod_{j=1}^k \frac{e^{(q-1)tz_j}}{(1-z_j)^{n_j}}\frac{dz_j}{z_j},\\
\mathbb{E}\left[ \prod_{j=1}^k z(\tau;n_j)\right] &= \frac{1}{(2\pi i)^k} \oint\cdots\oint \prod_{1\leq A<B\leq k} \frac{z_A-z_B}{z_A-z_B-1} \prod_{j=1}^k \frac{e^{\tau(z_j-1)}}{z_j^{n_j}}dz_j
\end{align*}
with nested contours.

We will now look to study the growth, for instance, of $\E\big[z(\tau,n)^k\big]$ as $\tau$ and $n$ go to infinity. This, it turns out, will require us to figure out how to unnest the contours so as to perform asymptotic analysis. But first, let us motivate this investigation with a story and a new concept -- intermittency.

Parabolic Anderson Models \index{Parabolic Anderson Model}%
are a class of models for mass transport or population dynamics in a disordered environment. We will be focusing on one spatial dimension, but as motivation consider algae in the sea. Algae can grow (or duplicate), die or move. The growth / death is very much a function of the local conditions (heat, sun light, nutrients, etc.) while the movement can be thought as due to steady currents.

We would like to understand certain intermittent behaviors, like the occurrence of large spikes in population for a little while in certain isolated areas.

Let us model the ocean as $\mathbb{Z}$ and think of algae as unit mass particles at the sites of $\mathbb{Z}$. There are no restrictions on particles per site, and in fact each particle evolves independent of others, only coupled through interaction with a common environment. In particular in continuous time, particles do three things independently:
\begin{itemize}
\item
split into two unit mass particles at exponential rate $r_+(\tau,n)$;
\item
die at exponential rate $r_-(\tau,n)$;
\item
jump to the right by $1$ at rate $1$.
\end{itemize}
The functions $r_+$ and $r_-$ represent an environment.

\begin{figure}[ht]
\begin{center}
\includegraphics[scale=1]{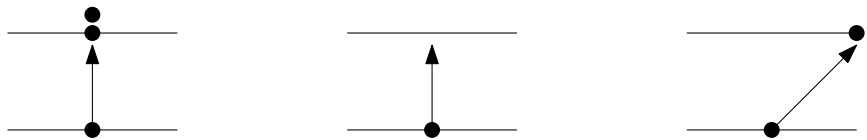}
\end{center}
\caption{Particles choose amongst: (Left) duplication; (center) death; (right) jump right.}
\end{figure}

Call $m(\tau,n)$ the expected total mass (expectation over the random evolution). Then one sees that
$$
\frac{d}{d\tau} m(\tau,n) = \nabla m(\tau,n) + \big(r_+(\tau,n)-r_-(\tau,n)\big)m(\tau,n).
$$
If the individual masses are very small, yet very many then this actually describes the mass density evolution.

If the environment $r_+,r_-$ is quickly and randomly evolving, then it may be appropriate to model $r_+(\tau,n)-r_-(\tau,n)$ by white noise in which case
$$
dm(\tau,n) = \nabla m(\tau,n)d\tau + dB_n(\tau)m(\tau,n)
$$
and we recover the semi--discrete SHE.

Initial data $m(0,n)=\mathbf{1}_{n=1}$ corresponds with starting a single cluster of particles at $n=1$ and letting them evolve.

It makes sense when studying this in large time, to scale $\tau$ and $n$ proportionally like $n=\nu\tau$ ($\nu=1$ making the most sense). We expect to see the overall population die out (though never reach $0$) at a certain exponential rate. However, we would like to understand how often or likely it is to see spikes above this background level. One measure of this is through Lyapunov exponents.

\begin{definition}
Assuming they exist, the almost sure Lyapunov exponent is
$$
\tilde{\gamma}_1(\nu) := \lim_{\tau\to\infty} \frac{1}{\tau}\log z(\tau,\nu\tau)
$$
and for $k\geq 1$, the $k$-th moment Lyapunov exponent is
$$
\gamma_k(\nu) := \lim_{\tau\to\infty} \frac{1}{\tau} \log \E\big[z(\tau,\nu \tau)^k\big].
$$
If these exponents are strictly ordered like
$$
\tilde{\gamma}_1(\nu) < \gamma_1(\nu) < \frac{\gamma_2(\nu)}{2} < \ldots < \frac{\gamma_k(\nu)}{k} < \ldots
$$
then the system is called intermittent.
\end{definition}

Intermittency implies that moments are only determined by tail behavior, and higher moments probe higher into the tails.

\begin{exercise}
Use Jensen's inequality to prove that there is always weak inequality among the Lyapunov exponents.
\end{exercise}
\begin{exercise}
Prove that if $z$ is intermittent then for any $\alpha$ such that $\tfrac{\gamma_k(\nu)}{k} < \alpha < \tfrac{\gamma_{k+1}(\nu)}{k+1},$
$$
\mathbb{P}\big(z(\tau,\nu \tau)>e^{\alpha\tau}\big) \leq e^{-(\alpha-\frac{\gamma_k(\nu)}{k})\tau}.
$$
\end{exercise}
Unfortunately, intermittency of $z$ will preclude deducing the distribution of $z$ from its moments. We will return later to a way around this.

We will explicitly compute all of these exponents, starting with the $\gamma_k$'s. For $\tilde{\gamma}_1,$ this will come later in a slightly different way.

In order to perform asymptotics of $\E\big[z(\tau, \nu\tau)^k\big],$ we should deform all contours to ones on which we can use steepest descent analysis \index{Steepest descent}%
(see Appendix \ref{asyana} for a simple worked out example). Such deformations may cross poles and hence pick up residues. We will see how this works.

Consider the case $k=1$:
$$
\E[z(\tau,\nu\tau)] = \frac{1}{2\pi i} \oint_{\vert z\vert =1} \frac{e^{\tau(z-1)}}{z^{\nu\tau}}dz = \frac{1}{2\pi i} \oint_{\vert z\vert =1} e^{\tau(z-1-\nu\log z)}dz.
$$
Call $f(z)=(z-1)-\nu\log z$ and observe that $f'(z)=1-\nu/z$ and $f''(z)=\nu/z^2.$ So $f$ has a critical point at $z_c=\nu$ and $f''(z_c)=1/\nu.$ To study the growth of the integral we should look at $\mathrm{Re} f(z)$ and deform contours to go through the critical point.

\begin{figure}[ht]
\begin{center}
\includegraphics[scale=.6]{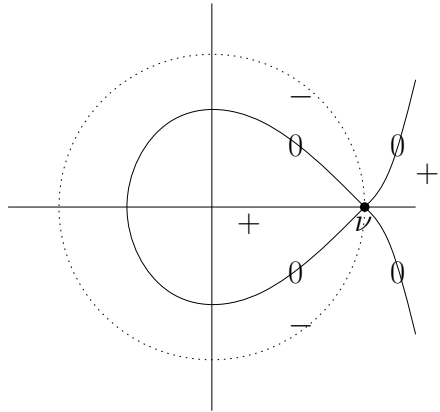}
\end{center}
\caption{Contour plot of $\mathrm{Re} f(z) - f(z_c)$ so dotted line has maximum at $\nu$ and is monotonically decreasing away.}
\end{figure}

\begin{exercise}
Show that $\mathrm{Re} f(\nu e^{i\theta})$ is strictly decreasing on $\theta\in [0,\pi].$
\end{exercise}

This implies that as $\tau\to \infty,$ the integral localizes to its value at the critical point. Since $f(\nu)=\nu-1-\nu\log\nu$ this means
$$
\gamma_1(\nu) = \nu - 1- \nu\log\nu.
$$
Consider the $k=2$ case:
$$
\E\big[z(\tau,\nu\tau)^2\big] = \frac{1}{(2\pi i)^2} \oint\oint \frac{z_1-z_2}{z_1-z_2-1} e^{f(z_1)+f(z_2)}dz_1dz_2,
$$
with nested contours for $z_1$ and $z_2$.
In order to deform both contours to the steepest descent curve we must deform $z_1$ to $z_2.$ Doing this we encounter a pole at $z_1=z_2+1.$ The residue theorem implies
\begin{align*}
\E\big[z(\tau,\nu\tau)^2\big]
&= \frac{1}{(2\pi i)^2} \oint\!\!\!\!\!\oint_{\vert z_1\vert=\vert z_2\vert =1 } \frac{z_1-z_2}{z_1-z_2-1} e^{f(z_1)+f(z_2)}dz_1dz_2 \\
&\qquad\qquad\qquad\qquad+ \frac{1}{2\pi i}\oint e^{f(z_2+1)+f(z_2)}dz_2.
\end{align*}
We must study growth of each term. The first term behaves like $e^{2f(\nu)\tau}$ while the second term requires closer consideration.

Set $\tilde{f}(z) = f(z) + f(z+1),$ then $\tilde{f}'(z) = 2 - \nu/z - \nu/(z+1)$ and so $\tilde{f}$ has a critical point at $z_c = \tfrac{1}{2} (\nu - 1 + \sqrt{\nu^2+1}).$  The plot of $\mathrm{Re} \tilde{f}(z) - \tilde{f}(z_c)$ looks quite similar to those for $f$, and hence the steepest descent proceeds similarly.

So the second term is like $e^{\tilde{f}(z_c)\tau}.$ Comparing $2f(\nu)$ to $\tilde{f}(\tfrac{1}{2}(\nu - 1 + \sqrt{\nu^2+1}))$ we find the second is larger, hence determines the growth of $\E\big[z(\tau,\nu\tau)^2\big].$

\begin{theorem}[\cite{BCLyapunov}]
The $k$-th Lyapunov exponent for the semi-discrete SHE are given by
$$
\gamma_k(\nu) = H_k(z_{c,k}),
$$
where
$$
H_k(z) = \frac{k(k-3)}{2} + kz - \nu\log\left( \prod_{i=0}^{k-1} (z+1) \right)
$$
and $z_{c,k}$ is the unique solution to $H'_k(z)=0,z\in (0,\infty).$
\end{theorem}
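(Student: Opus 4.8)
The plan is to specialize the semi-discrete SHE moment formula to equal arguments, unnest the nested contours into a sum of lower-dimensional integrals indexed by ``strings'', run steepest descent on each, and then show that the fully-collapsed single-string term strictly dominates. Setting $n_1=\cdots=n_k=\nu\tau$ in the moment formula recalled at the start of this section, and writing $f(z)=(z-1)-\nu\log z$ as in the $k=1,2$ analysis above (so that $e^{\tau(z-1)}z^{-\nu\tau}=e^{\tau f(z)}$), one gets
\[
\E\big[z(\tau,\nu\tau)^k\big]=\frac{1}{(2\pi i)^k}\oint\cdots\oint\prod_{1\le A<B\le k}\frac{z_A-z_B}{z_A-z_B-1}\ \prod_{j=1}^k e^{\tau f(z_j)}\,dz_j ,
\]
with the nested contours as before. (One should really take $n=\lfloor\nu\tau\rfloor$, but this is immaterial for the $\tfrac1\tau\log$ limit.)

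First I would unnest. Pushing each $z_A$-contour inward to a common one, one crosses only the simple poles $z_A=z_B+1$; collecting residues exactly as in the $k=2$ computation above — the general bookkeeping is in \cite{BorCor,BCS} — yields $\E[z(\tau,\nu\tau)^k]=\sum_{\lambda\,\vdash\,k} C_\lambda(\tau)$, where a partition $\lambda=(\lambda_1\ge\cdots\ge\lambda_\ell\ge1)$ of $k$ encodes a grouping of the variables into strings: each part $\lambda_a$ produces a string $w_a,w_a+1,\dots,w_a+\lambda_a-1$, and $C_\lambda(\tau)$ is an $\ell$-fold contour integral whose integrand is a fixed ($\tau$-free) rational factor times $\prod_{a=1}^\ell e^{\tau H_{\lambda_a}(w_a)}$, with $H_m(w):=\sum_{j=0}^{m-1}f(w+j)$. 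Since $f(z)=z-1-\nu\log z$, a one-line computation gives $H_m(w)=\tfrac{m(m-3)}2+mw-\nu\log\prod_{j=0}^{m-1}(w+j)$, the $H_m$ of the statement (modulo the misprint $(z+1)\mapsto(z+i)$ there). From the sum representation $H_m$ is strictly convex on $(0,\infty)$ with $H_m'(w)=m-\nu\sum_{j=0}^{m-1}(w+j)^{-1}$ increasing from $-\infty$ (at $w\downarrow0$) to $m$ (at $w\to\infty$), so it has a unique critical point $z_{c,m}\in(0,\infty)$, its global minimum there; and comparing the average $\tfrac1m\sum_j(z_{c,m}+j)^{-1}=\tfrac1\nu$ with its extreme terms gives $\nu-m+1<z_{c,m}<\nu$ for $m\ge2$ and $z_{c,1}=\nu$. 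I would also record the identity $H_{a+b}(w)=H_a(w)+H_b(w+a)$, immediate from the sum representation.

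Next comes the steepest descent: generalizing the $k=1$ exercise, for each $m$ deform the string variable $w_a$ to a closed contour through $z_{c,m}$, encircling the relevant poles, on which $\Re H_m$ is maximized only at $z_{c,m}$, and apply the method of Appendix \ref{asyana} to get $C_\lambda(\tau)=(\text{power of }\tau)\cdot e^{\tau\Psi(\lambda)}(1+o(1))$ with $\Psi(\lambda):=\sum_{a}H_{\lambda_a}(z_{c,\lambda_a})$; for $\lambda=(k)$ the rational factor reduces to a nonzero constant, so $C_{(k)}$ has exact exponential order $H_k(z_{c,k})$. Now the additivity identity gives \emph{strict} superadditivity of $\Psi$: $\Psi((a{+}b))=\min_w\big(H_a(w)+H_b(w+a)\big)\ge\min H_a+\min H_b=\Psi((a))+\Psi((b))$, with equality only if $z_{c,b}=z_{c,a}+a$, which is impossible since $z_{c,a}+a\ge\nu+1>\nu\ge z_{c,b}$. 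Iterating, $\Psi(\lambda)<H_k(z_{c,k})$ whenever $\ell\ge2$. Hence by the triangle inequality $\tfrac1\tau\log\E[z(\tau,\nu\tau)^k]\le\max_\lambda\Psi(\lambda)+o(1)=H_k(z_{c,k})+o(1)$, while the single-string term alone forces the exponential order to be at least $H_k(z_{c,k})$ — the strictly smaller terms cannot cancel it down, and $\E[z(\tau,\nu\tau)^k]>0$. Thus $\gamma_k(\nu)=H_k(z_{c,k})$.

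The routine parts are the unnesting combinatorics (available in \cite{BorCor,BCS}) and the convexity/superadditivity arguments. The hard part — the main obstacle — is the contour analysis inside each $C_\lambda$: constructing, for each $m$, a closed contour passing through $z_{c,m}$ on which $\Re H_m$ is globally dominated by its value at $z_{c,m}$ (the easy monotonicity at $k=1$ fails for $m\ge2$, since the linear and logarithmic terms pull against each other off the real axis, and for large $\nu$ one must also track residues at negative integers crossed during the deformation), doing this simultaneously for the several interacting variables of a fixed $C_\lambda$, and confirming that the single-string term is genuinely of size $e^{\tau H_k(z_{c,k})}$. These are precisely the estimates carried out in \cite{BCLyapunov}.
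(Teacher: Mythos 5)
Your proposal is correct and follows essentially the same route as the paper's proof sketch: unnest the nested contours into residue strings indexed by partitions of $k$, run steepest descent on each string through the real critical point of $H_{\lambda_a}$, and observe that the fully collapsed single string $\lambda=(k)$ dominates. In fact your strict superadditivity argument — $H_{a+b}(w)=H_a(w)+H_b(w+a)$ together with the localization $\nu-m+1<z_{c,m}\le\nu$ forcing $z_{c,a}+a>\nu\ge z_{c,b}$ — correctly supplies exactly the domination step that the paper's sketch explicitly declines to give, and your reading of $\prod_{i=0}^{k-1}(z+i)$ in place of $(z+1)$ is the right correction of the statement.
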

\begin{proof}[Proof sketch]
For general $k$ case there are many terms to consider in deforming contours. However, there is an argument (which I won't make here) that shows its the term coming from taking the residue at
$$
z_1 = z_k+k-1, \quad z_2=z_k+k-2,  \ldots, z_{k-1}= z_k+1
$$
which matters. In terms of exponential terms this yields $e^{\tau H_k(z_k)}$ and steepest descent analysis gives the result.
\end{proof}

Let us record $\tilde{\gamma}_1,$ to be proved later.

\begin{theorem}[\cite{OConMor}]
The almost sure Lyapunov exponent is given by
$$
\tilde{\gamma}_1(\nu) = - \frac{3}{2} + \inf_{s>0} \big(s-\nu\psi(s)\big)
$$
where $\psi(s)= (\log\Gamma)'(s)$ is the digamma function.
\end{theorem}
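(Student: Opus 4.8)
The plan is to convert the statement into one about the free energy of the O'Connell--Yor polymer, extract the leading-order constant from the exact-solvability machinery, and then promote a limit in probability to an almost sure statement via concentration of measure.

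\textbf{Step 1: reduction to the O'Connell--Yor free energy.} Run the Feynman--Kac representation of Section~\ref{L6} for the semi--discrete SHE with $z_0(n)=\mathbf 1_{n=1}$. The process $\varphi$ with generator $\nabla$, started at level $n$ at time $\tau$ and run backwards, decreases by one at rate $1$ and must land at level $1$ at time $0$; hence it makes exactly $n-1$ jumps, integrating out the Poissonian jump times produces a factor $e^{-\tau}$, and the Girsanov/Wick term $\int_0^\tau\tfrac{ds}{2}$ produces a factor $e^{-\tau/2}$. Thus
$$
z(\tau;n)=e^{-3\tau/2}\,Z^{1}(\tau,n),\qquad Z^{1}(\tau,n)=\int_{0<s_1<\cdots<s_{n-1}<\tau}\! d\vec s\;e^{B_1(0,s_1)+\cdots+B_n(s_{n-1},\tau)},
$$
with $Z^{1}$ the O'Connell--Yor partition function at inverse temperature $1$. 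So it suffices to prove that almost surely $\lim_{\tau\to\infty}\tfrac1\tau\log Z^{1}(\tau,\nu\tau)=\inf_{s>0}\bigl(s-\nu\psi(s)\bigr)$, where $\nu\tau$ ranges over the integers (equivalently $\tau\in\tfrac1\nu\mathbb Z$).

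\textbf{Step 2: the limit in probability from exact formulas.} Here I would use the machinery of the later sections. Feed the step initial data $q$--TASEP moment formula of Theorem~\ref{thmgenmom} through the $q\nearrow1$ scaling of Theorem~\ref{thmlimits}, as begun in \eqref{Star3}; resum all moments into a $q$--Laplace transform via a Mellin--Barnes (nested-contour) representation, and pass to the limit to obtain a Fredholm determinant $\E\bigl[e^{-u\,z(\tau;n)}\bigr]=\det(I-K_{u})$ with kernel built from ratios of Gamma functions and exponential factors $e^{\tau(\,\cdot\,)}$. Setting $u=e^{-r\tau}$ and $n=\nu\tau$, a steepest-descent analysis in $\tau$ applies: the exponent in the kernel has the shape $s\mapsto s-\nu\log\Gamma(s)+(\text{linear in }s)$, together with the $-\tfrac32\tau$ from the normalisation of $z_\epsilon$, so its critical-point equation involves precisely $\psi=(\log\Gamma)'$ and the value at the saddle is $-\tfrac32+\inf_{s>0}(s-\nu\psi(s))$. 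One finds $\det(I-K_u)\to 1$ when $r>-\tfrac32+\inf_{s>0}(s-\nu\psi(s))$ and $\to 0$ when $r$ is below this threshold; using $1-e^{-x}\ge\tfrac12\min(x,1)$ for $x\ge0$ together with monotonicity of $e^{-x}$, this converts into $\tfrac1\tau\log z(\tau;\nu\tau)\to-\tfrac32+\inf_{s>0}(s-\nu\psi(s))$ in probability.

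\textbf{Step 3: upgrade to almost sure convergence.} Viewing $\log Z^{1}(\tau,n)$ as a functional of the driving Brownian paths, a direct Malliavin computation identifies its Cameron--Martin gradient with the polymer occupation field $(p_j(r))_{j,r}$, whose squared norm is $\sum_j\int_0^\tau p_j(r)^2\,dr\le\tau$; Gaussian concentration then gives $\P\bigl(|\log Z^{1}(\tau,n)-\E\log Z^{1}(\tau,n)|\ge\lambda\bigr)\le 2e^{-\lambda^2/2\tau}$. Combined with Step~2 this forces $\tfrac1\tau\E\log z(\tau;\nu\tau)$ to converge to the same limit, and the deviation bound with $\tau=n/\nu$ and $\lambda=\delta\tau$ becomes $2e^{-\delta^2 n/2\nu}$, summable in $n$; Borel--Cantelli then yields the almost sure convergence, which is exactly the claimed formula for $\tilde{\gamma}_1(\nu)$.

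\textbf{Main obstacle, and an alternative.} The delicate point is Step~2: one must carry the $q$--moment formula through the \emph{double} limit ($q\nearrow1$, then $\tau\to\infty$) with enough uniformity to control the Fredholm determinant, and, crucially, track the unnesting of the contours past their poles so that a single residual term---whose saddle point produces the infimum over $s$---dominates, in the spirit of the $\gamma_k$ computation above. (Note the $\gamma_k$ cannot be used directly: intermittency makes $\gamma_k/k\to\infty$, so moment bounds over-count the typical size of $z$.) An alternative route, which is the one in \cite{OConMor} and avoids the $q$--deformation, is the Brownian-queue argument: introduce a stationary O'Connell--Yor polymer with an extra drifted ``source'' row of parameter $s$, invoke Burke's theorem for Brownian queues to show that its free-energy increments in $n$ become i.i.d.\ with mean of the form $s-\nu\psi(s)$ (the $\psi(s)$ being $\E[\log X]$ for $X\sim\mathrm{Gamma}(s,1)$), apply the ergodic theorem, and then optimise over the free parameter $s$, with the comparison between the stationary and $\mathbf 1_{n=1}$ polymers as the only remaining estimate.
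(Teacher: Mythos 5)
Your proposal is sound, and it is worth noting that the paper itself supplies no proof of this statement: it records the formula with the citation to \cite{OConMor}, promises it ``later,'' and the only later appearance is as the centering constant in the (also unproved here) Tracy--Widom fluctuation theorem of Section \ref{L8}. Your Step 1 is correct and matches the paper's normalisation ($z(\tau;n)=e^{-3\tau/2}Z^{1}(\tau,n)$, consistent with $z_\epsilon=\exp\{-3\tau/2+\mathcal F_\epsilon\}$ and with the factor $e^{3\tau/2}$ in the semi-discrete Laplace-transform theorem). Your Step 2 is exactly the paper's implicit route --- the $e_q$-Laplace transform, the Mellin--Barnes rewriting, the $q\nearrow 1$ limit to the Fredholm determinant with $g(z)=\Gamma(z)^n u^{-z}e^{-\tau z^2/2}$, and a saddle-point analysis whose critical-point equation produces $\psi$ --- and you are right to flag it as the technical heart; as presented it is a programme, not a proof, but it is the correct programme and it only yields convergence in probability. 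Your Step 3 is the genuinely new ingredient relative to the paper: the Cameron--Martin Lipschitz bound $\sum_j\int_0^\tau p_j(r)^2\,dr\le\tau$ (since the occupation probabilities sum to one at each time) plus Gaussian concentration and Borel--Cantelli is a standard and correct way to upgrade to the almost sure statement, which the paper's Fredholm-determinant machinery cannot deliver on its own. Finally, the alternative you sketch (stationary polymer, Burke's theorem for Brownian queues, ergodic theorem, optimisation over the drift $s$ --- with $\psi(s)=\E[\log X]$ for $X\sim\mathrm{Gamma}(s,1)$) is indeed the argument of the cited reference \cite{OConMor}; it is more elementary in that it avoids the $q$-deformation and integrable machinery entirely, at the cost of only identifying the first-order constant, whereas the determinantal route also gives the $\tau^{1/3}$ fluctuations. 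Either route, completed, proves the theorem; your hybrid is a legitimate third option.
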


By comparing these exponents we see (in very high resolution) the intermittency behavior of this model. We also see that the $k$--moments grow like $e^{ck^2}$ so they do not determine the distribution of $z$.

We finish this section by showing how nested contour integrals (at the $q$--level) expand into residual subspaces as all contours are deformed to that of $z_k$. This will also be important in studying the $q$-moment generating function in the next section.

\begin{theorem}\label{unnestthm} Given nested contours $\gamma_1,\ldots,\gamma_k$ and a function $F(z_1,\ldots,z_k)$ which is analytic between the nested contours $\gamma_1,\ldots,\gamma_k$ we have
\begin{align}\label{Star9}
\begin{split}
&\oint_{\gamma_1} \frac{dz_1}{2\pi i} \cdots \oint_{\gamma_k} \frac{dz_k}{2\pi i} \prod_{1\leq A<B\leq k} \frac{z_A-z_B}{z_A-qz_B} F(z_1,\ldots,z_k) \\
&\qquad\qquad= \sum_{\lambda \vdash k} \underbrace{\oint_{\gamma_k}\cdots\oint_{\gamma_k}}_{\ell(\lambda)} d\mu_{\lambda}(\vec{w}) E^q(\vec{w}\circ\lambda)
\end{split}
\end{align}
where $\lambda\vdash k$ means $\lambda$ partitions $k$, a partition $\lambda=(\lambda_1\geq \lambda_2\geq\ldots \geq \lambda_{\ell(\lambda)} > 0)$ with $\lambda_i\in \mathbb{Z}_{\geq 0}, \sum \lambda_i=k,$ and $m_i=\left| \{j:\lambda_j=i\}\right|$, the {\rm(}complex valued{\rm)} measure
\begin{align}\label{dmu}
\begin{split}
d\mu_{\lambda}(\vec{w}) = \frac{(1-q)^k(-1)^k q^{-k(k-1)/2}}{m_1!m_2!\cdots} &\det \left[\frac{1}{w_i q^{\lambda_i} - w_j}\right]_{i,j=1}^{\ell(\lambda)}\\
&\qquad\times\prod_{j=1}^{\ell(\lambda)} w_j^{\lambda_j} q^{\lambda_j(\lambda_j-1)/2} \frac{dw_j}{2\pi i},
\end{split}
\end{align}
and the function
$$
E^q(z_1,\ldots,z_k) = \sum_{\sigma \in S_k} \prod_{1\leq B<A\leq k} \frac{z_{\sigma(A)}-qz_{\sigma(B)}}{z_{\sigma(A)}-z_{\sigma(B)}} F(z_{\sigma(1)},\ldots, z_{\sigma(k)})
$$
with
$$
\vec{w}\circ\lambda = (w_1,qw_1,\ldots,q^{\lambda_1-1}w_1,w_2,qw_2,\ldots,q^{\lambda_2-1}w_2,\ldots,w_\ell,qw_\ell,\ldots,q^{\lambda_l-1}w_\ell)
$$
where $\ell=\ell(\lambda)$.
\end{theorem}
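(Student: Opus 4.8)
The plan is to prove \eqref{Star9} by contracting the nested contours $\gamma_1,\ldots,\gamma_{k-1}$ successively onto the innermost one $\gamma_k$, collecting the residues generated by the factors $\tfrac{z_A-z_B}{z_A-qz_B}$, and then resumming the resulting finite sum of iterated-residue integrals into the right-hand side. First I would contract the $z_1$-contour from $\gamma_1$ past $\gamma_2,\ldots,\gamma_{k-1}$ down to $\gamma_k$: the only singularities swept are the simple poles at $z_1=qz_j$, $j=2,\ldots,k$ (since $F$ is analytic in the region between the contours it contributes none), one such pole being crossed as the contour moves from $\gamma_j$ to $\gamma_{j+1}$. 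Repeating this for $z_2$, then $z_3$, and so on — in every branch of the expansion produced so far — yields a finite sum, each term of which is a multiple residue in some of the variables followed by an integration of the surviving ``free'' variables over $\gamma_k$. Checking that precisely the listed poles (and no others) are crossed at each step is a matter of checking the nesting geometry, which can be arranged by a suitable choice of the $\gamma_j$ (for instance nearly-coincident circles in the $q$-TASEP application, with radii ordered so that $q\gamma_{j+1}$ lies between $\gamma_{j+1}$ and $\gamma_j$).

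Each iterated residue $z_a=qz_b$ sets variables equal to geometric progressions $w,qw,q^2w,\dots$ of ratio $q$, so the terms of the expansion are naturally indexed by set partitions of $\{1,\dots,k\}$ into blocks, a block of size $\ell$ being one such ``$q$-string'' with free head $w\in\gamma_k$. For a fixed string the internal factors $\tfrac{z_a-z_b}{z_a-qz_b}$ together with the Jacobians coming from the iterated residues telescope to a scalar depending only on the string length; multiplied over all strings this produces the constant $(1-q)^k(-1)^kq^{-k(k-1)/2}$ and the weights $\prod_{j}w_j^{\lambda_j}q^{\lambda_j(\lambda_j-1)/2}$ of \eqref{dmu}. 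The factors coupling two distinct strings telescope to rational functions of the two heads; summing over the admissible orders in which residues involving different strings may be extracted — equivalently over which string is ``outer'' and which ``inner'' — and invoking the Cauchy determinant identity collapses this sum into $\det\!\bigl[(w_iq^{\lambda_i}-w_j)^{-1}\bigr]_{i,j=1}^{\ell(\lambda)}$. Because $F$ is not symmetric, during these reorderings it is evaluated at the configuration $\vec w\circ\lambda$ with its arguments permuted in all possible ways, and grouping those evaluations reproduces exactly the symmetrized function $E^q(\vec w\circ\lambda)$. Finally, collecting all set partitions with a common multiset of block sizes $\lambda\vdash k$ introduces the factor $m_1!m_2!\cdots$ (permutations of equal-length strings being already accounted for inside $E^q$ and the determinant), turning the grouped contributions into $\int d\mu_\lambda(\vec w)\,E^q(\vec w\circ\lambda)$, which is \eqref{Star9}.

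The main obstacle is the resummation just sketched: showing that the a priori unwieldy sum of iterated residues, weighted by all the factors $\tfrac{z_A-z_B}{z_A-qz_B}$ and summed over the permissible orders of residue extraction, reassembles into the compact $d\mu_\lambda\cdot E^q$ with exactly the stated constants. In practice this reduces to a $q$-Cauchy-determinant computation together with a careful count of residue multiplicities. A convenient way to organize the bookkeeping is to first carry out the cases $k=1$ and $k=2$ by hand — the case $k=2$ already exhibits both partitions $(2)$ and $(1,1)$ and fixes every constant — and then run the contraction of the $z_1$-contour as an induction on $k$, invoking the $(k-1)$-variable form of \eqref{Star9} on each integral produced by the $z_1$-residues and matching the combinatorial prefactors with those in $d\mu_\lambda$; one must also verify that the modified integrands produced by the $z_1$-residues are still analytic between the relevant contours so that the inductive hypothesis applies.
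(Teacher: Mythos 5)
Your proposal is correct and follows essentially the same route as the paper: successive contour contraction collecting residues at $z_A=qz_B$, indexing the resulting terms by partitions and $q$-strings, telescoping the string-internal factors, extracting the Cauchy-type determinant from the cross-string factors, symmetrizing $F$ into $E^q$, and dividing by $m_1!m_2!\cdots$ for the over-counting of equal-length strings. The only cosmetic difference is your suggestion to organize the bookkeeping as an induction on $k$, whereas the paper factors the cross term into an $S_k$-symmetric part (whose residue gives $d\mu_\lambda$ directly) times the part defining $E^q$.
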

\begin{proof}
Let us consider an example, $k=3,$ to get an idea of residue expansion structure. The cross term (which gives poles) is
$$
\frac{z_1-z_2}{z_1-qz_2}\frac{z_1-z_3}{z_1-qz_3}\frac{z_2-z_3}{z_2-qz_3}.
$$
As $\gamma_2$ deforms to $\gamma_3$ we cross poles at $z_2=qz_3.$ We can either take the residue or the integral.
\begin{itemize}
\item
Residue: Cross term becomes $\tfrac{z_1-z_3}{z_1-qz_3}z_3(q-1)$ and only the $z_1,z_2$ integrals survive. As $\gamma_1$ deforms to $\gamma_3$ it crosses a pole at $z_1=q^2z_3$. Again we can either pick the residue or integral.

\item
Integral: Cross term remains the same. Now shrink $\gamma_1$ to $\gamma_2$. It crosses poles at $z_1=qz_2$ and $z_1=qz_3$. We can pick either of these residues or the integral.
\end{itemize}
In total, get five terms for various residual subspaces: $z_1=qz_2$ and $z_2=qz_3$; $z_1,z_2=qz_3$; $z_1=qz_2,z_3;$ $z_1=qz_3,z_2;$ $z_1,z_2,z_3$.

In general we index our residue expansion via partitions $\lambda\vdash k$ and strings
\begin{align*}
& i_1<i_2<\ldots<i_{\lambda_1}, \\
I = \quad \quad \quad  & j_1<j_2<\ldots<j_{\lambda_2}, \\
&  \quad \quad \vdots
\end{align*}
where these indices are disjoint and union to $\{1,\ldots,k\}$. Each row corresponds with taking residues at $z_{i_1}=qz_{i_2},z_{i_2}=qz_{i_3},\ldots,z_{j_1}=qz_{j_2},z_{j_2}=q z_{j_3},\ldots, z_{j_{\lambda_2-1}}=q z_{j_{\lambda_2}},\ldots$. This is written as ${\mathrm{Res}^q_I} f(z_1,\ldots,z_k)$ and the output is a function of $z_{i_{\lambda_1}},z_{j_{\lambda_2}},\ldots.$

For example, for $k=3$ and $\lambda=(2,1)$ we can have $I\in \left\{{ 1<2 \atop 3},{1<3 \atop 2}, {2<3 \atop 1} \right\} $.

Calling $S(\lambda)$ the set of all $I$'s corresponding to $\lambda, $ we thus see that
\begin{align*}
\text{LHS}\eqref{Star9} &= \sum_{\lambda \vdash k} \frac{1}{m_1!m_2!\ldots}\\
&\qquad\times \sum_{I\in S(\lambda)}\underbrace{\oint_{\gamma_k} \frac{dz_1}{2\pi i} \cdots \oint_{\gamma_k}}_{\ell(\lambda)}\frac{dz_\ell}{2\pi i}\mathrm{Res}_I^q \left( \prod_{1\leq A<B\leq k} \frac{z_A-z_B}{z_A-qz_B} F(z_1,\ldots,z_k)\right)
\end{align*}
where the $m_i!$'s come from symmetries of $\lambda$ and multi--counting of residue subspaces arising from that. For each $I$ we can relabel the $z$ variables as
\begin{align*}
(z_{i_1}, \ldots, z_{i_{\lambda_1}}) &\mapsto (y_{\lambda_1},\ldots, y_1) \\
(z_{j_1},\ldots, z_{j_{\lambda_2}}) &\mapsto (y_{\lambda_1+\lambda_2},\ldots,y_{\lambda_1+1})
\end{align*}
and call $\sigma\in S_k$ the permutation taking $z_i\mapsto y_{\sigma(i)}$ and call the last $y$'s $w_j=y_{\lambda_1+\ldots+\lambda_{j-1}+1}$.  This change of variables puts all residual spaces with given $\lambda$ into the form
\begin{align*}
y_{\lambda_1}=qy_{\lambda_1-1}, &\ldots, y_2=qy_1 \\
y_{\lambda_1+\lambda_2} = qy_{\lambda_1+\lambda_2-1}, &\ldots, y_{\lambda_1+2}=qy_{\lambda_1+1}\\
&\vdots
\end{align*}
We denote the residue on this subspace $\mathrm{Res}_{\lambda}^q$. Thus the left-hand side of \eqref{Star9} equals
\begin{align*}
&\sum_{\lambda \vdash k} \frac{1}{m_1!m_2!\ldots}\\
&\qquad\times \sum_{\sigma \in S_k} \underbrace{\oint_{\gamma_k}\frac{dw_1}{2\pi i} \cdots \oint_{\gamma_k}}_{\ell(\lambda)}\frac{dw_\ell}{2\pi i} \mathrm{Res}_{\lambda}^q \left( \prod_{1\leq A<B\leq k} \frac{y_{\sigma(A)}-y_{\sigma(B)}}{y_{\sigma(A)}-qy_{\sigma(B)}} F(y_{\sigma(1)},\ldots,y_{\sigma(k)})\right).
\end{align*}
Notice: not all $\sigma\in S_k$ arise from $I$'s, but it is easy to see that those which do not have residue $0$. Finally, rewrite
$$
\prod_{1\leq A<B\leq k}  \frac{y_{\sigma(A)}-y_{\sigma(B)}}{y_{\sigma(A)}-qy_{\sigma(B)}}  = \prod_{A\neq B} \frac{y_A-y_B}{y_A-qy_B} \prod_{1\leq B<A\leq k} \frac{y_{\sigma(A)}-qy_{\sigma(B)}}{y_{\sigma(A)}-y_{\sigma(B)}}.
$$
The $S_k$--symmetric part has no poles in the residue subspaces and can be factored, leaving
$$
\text{LHS}\eqref{Star9}= \sum_{\lambda\vdash k} \frac{1}{m_1!m_2!\cdots } \oint_{\gamma_k} \frac{dw_1}{2\pi i} \cdots \oint_{\gamma_k} \frac{dw_{\ell}}{2\pi i} \mathrm{Res}_{\lambda}^q\left( \prod_{A\neq B} \frac{y_A-y_B}{y_A-qy_B}\right)  E^q(\vec{w}\circ\lambda).
$$
Regarding $\mathrm{Res}_{\lambda}^q$ as a function of $w$'s one checks  that this yields $d\mu_{\lambda}(\vec{w})$.
\end{proof}

\section[From moments to the Fredholm determinant]{From moments to the $q$--Laplace transform Fredholm determinant}\label{L8}
\index{Fredholm determinant}%
We will start by applying the residue expansion theorem to our nested contour integral formula for $\E^{\text{step}}[q^{k(x_n(t)+n)}]$, i.e. taking all $n_i \equiv n$.

\begin{corollary}
For step initial data $q$--TASEP, $k\geq 1$ and $n\geq 1$,
\begin{align*}
&\E^{\text{step}}\big[q^{k(x_n(t)+n)}\big] \\
&\quad= k_q! \sum_{\la\vdash k} \frac{(1-q)^k}{m_1!m_2!\ldots} \oint_\gamma \frac{dw_1}{2\pi i} \cdots \oint_\gamma \frac{dw_{\ell(\la)}}{2\pi i} \det\left[ \frac{1}{w_iq^{\la_i}-w_j}\right]_{i,j=1}^{\ell(\lambda)} \prod_{j=1}^{\ell(\la)} \frac{e^{t(q^{\la_j-1})w_j}}{(w_j;q)^n_{\la_j}},
\end{align*}
where the integral occurs over $\gamma$, a small contour containing 1. Here, $(a;q)_\ell = (1-a)(1-qa)\cdots (1-q^{\ell-1} a)$ is called the $q$--Pochhammer symbol, $k_q! = (q;q)_k / (1-q)^k.$
\end{corollary}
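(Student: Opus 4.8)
The plan is to specialize the nested-contour moment formula of the Corollary to Theorem~\ref{thmgenmom} to $n_1=\cdots=n_k=n$ and then feed it into the residue-expansion Theorem~\ref{unnestthm}. Rewriting that moment formula with the measures $\tfrac{dz_j}{2\pi i}$, it has exactly the shape of the left-hand side of~\eqref{Star9} with
$$
F(z_1,\ldots,z_k)=(-1)^k q^{k(k-1)/2}\prod_{j=1}^k\frac{e^{(q-1)tz_j}}{(1-z_j)^n\,z_j},
$$
which is a product of single-variable factors (hence symmetric) whose only singularities are at $z=1$, inside the innermost contour, and $z=0$, outside all the contours; thus $F$ is analytic in the region between the nested contours and Theorem~\ref{unnestthm} applies. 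The innermost contour is a small loop around $1$ avoiding $0$, which is the $\gamma$ of the statement.

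Because $F$ is symmetric, the function $E^q$ on the right-hand side of~\eqref{Star9} collapses:
$$
E^q(z_1,\ldots,z_k)=F(z_1,\ldots,z_k)\sum_{\sigma\in S_k}\prod_{1\le B<A\le k}\frac{z_{\sigma(A)}-qz_{\sigma(B)}}{z_{\sigma(A)}-z_{\sigma(B)}}=k_q!\,F(z_1,\ldots,z_k),
$$
where I use the classical symmetrization identity $\sum_{\sigma\in S_k}\prod_{1\le B<A\le k}\tfrac{z_{\sigma(A)}-qz_{\sigma(B)}}{z_{\sigma(A)}-z_{\sigma(B)}}=(q;q)_k/(1-q)^k=k_q!$, an identity of rational functions in $z_1,\ldots,z_k$ checkable by induction on $k$ (reversing the permutation index shows it is insensitive to the $B<A$ versus $A<B$ convention). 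Since the entries of the $q$-string $\vec w\circ\la$ are distinct for generic $\vec w$, this gives $E^q(\vec w\circ\la)=k_q!\,F(\vec w\circ\la)$.

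It then remains to evaluate $F(\vec w\circ\la)\,d\mu_\la(\vec w)$. Substituting $\vec w\circ\la=(w_1,qw_1,\ldots,q^{\la_1-1}w_1,\ldots,w_\ell,\ldots,q^{\la_\ell-1}w_\ell)$ into $F$ produces, block by block for $j=1,\ldots,\ell(\la)$: the exponential factors multiply to $\exp\{(q-1)tw_j\sum_{m=0}^{\la_j-1}q^m\}=e^{t(q^{\la_j}-1)w_j}$; the factors $(1-q^mw_j)^{-n}$ multiply to $\big(\prod_{m=0}^{\la_j-1}(1-q^mw_j)\big)^{-n}=(w_j;q)_{\la_j}^{-n}$; and the factors $(q^mw_j)^{-1}$ multiply to $q^{-\la_j(\la_j-1)/2}w_j^{-\la_j}$. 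Multiplying $k_q!\,(-1)^kq^{k(k-1)/2}$ times these products times $d\mu_\la(\vec w)$ from~\eqref{dmu}, the two $(-1)^k$ cancel, the $q^{k(k-1)/2}$ cancels the $q^{-k(k-1)/2}$, and the per-block $w_j^{-\la_j}q^{-\la_j(\la_j-1)/2}$ cancels the $w_j^{\la_j}q^{\la_j(\la_j-1)/2}$ in $d\mu_\la$. What survives is precisely
$$
k_q!\,\frac{(1-q)^k}{m_1!m_2!\cdots}\det\!\left[\frac{1}{w_iq^{\la_i}-w_j}\right]_{i,j=1}^{\ell(\la)}\prod_{j=1}^{\ell(\la)}\frac{e^{t(q^{\la_j}-1)w_j}}{(w_j;q)_{\la_j}^{n}}\,\frac{dw_j}{2\pi i},
$$
and summing over $\la\vdash k$ yields the claimed identity.

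The only step that is not pure bookkeeping is the symmetrization identity, which I would cite (it is the same identity underlying Hall--Littlewood symmetrization); otherwise the main care is in tracking the $(-1)^k$, $q^{\pm k(k-1)/2}$ and $w_j$-power cancellations and in verifying that the contour $\gamma$ inherited from the innermost $\gamma_k$ is small enough that the only enclosed poles are the ones visibly present in the final integrand.
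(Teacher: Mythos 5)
Your proposal is correct and follows essentially the same route as the paper: apply Theorem~\ref{unnestthm} with $F$ absorbing the full prefactor $(-1)^kq^{k(k-1)/2}$, use the symmetry of $F$ together with the symmetrization identity to collapse $E^q$ to $k_q!\,F$, and then telescope the geometric substitutions $\vec w\circ\lambda$ against $d\mu_\lambda$. You have simply written out in full the cancellations that the paper summarizes as ``simplifications and telescoping,'' and your bookkeeping checks out.
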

\begin{proof}
Apply the residue expansion theorem with
$$
F(z_1,\ldots,z_k) = (-1)q^{k(k-1)/2} \prod_{j=1}^k \frac{e^{t(q-1)z_j}}{(1-z_j)^n}\frac{1}{z_j}.
$$
Observe that due to the symmetry of $F$ in the $z$'s
$$
E^q(z_1,\ldots,z_k) = F(z_1,\ldots,z_k) \cdot \sum_{\sigma\in S_k} \prod_{1\leq B<A\leq k} \frac{z_{\sigma(A)}-qz_{\sigma(B)}}{z_{\sigma(A)}-z_{\sigma(B)}} = F(z_1,\ldots,z_k)\cdot k_q!
$$
and the rest comes from simplifications and telescoping in the geometric substitutions.
\end{proof}

\begin{exercise}
Prove that
$$
\sum_{\sigma \in S_k} \prod_{B<A} \frac{z_{\sigma(A)}-qz_{\sigma(B)}}{z_{\sigma(A)}-z_{\sigma(B)}}=k_q!
$$
\end{exercise}

The formula for $\E\big[q^{k(x_n(t)+n)}\big]$ can be written as
$$
\E\big[q^{k(x_n(t)+n)}\big] = k_q! \sum_{l=1}^{\infty} \frac{1}{l!} \sum_{\substack{\lambda_1,\ldots\lambda_{\ell}=1\\ \lambda_1+\cdots +\lambda_l=k}}^\infty \oint_\gamma \frac{dw_1}{2\pi i}\cdots \oint_\gamma \frac{dw_\ell}{2\pi i} \det \left[ K_1(\la_i,w_i;\la_j,w_j) \right]_{i,j=1}^l,
$$
where
$$
K_1(\la,w,\la',w') = \frac{(1-q)^{\la} e^{(q^{\la}-1)t w} (w;q)_\la^{-n}}{wq^{\la}-w'}.
$$

Notice how the symmetry factor $\tfrac{1}{m_1!m_2!\ldots}$ was replaced by $\frac{1}{l!}$ due to the unordering of the $\la_1,\ldots,\la_k$.

In principle, since the moments of $q^{x_n(t)+n}$ determine its distribution, so too will any moment generating function. We will consider one well suited to the formulas. For $\zeta\in \mathbb{C}$ define (this is only consistent for small $\vert \zeta\vert$)
\begin{align*}
G(\zeta) &:= \sum_{k\geq 0} \E\big[q^{k(x_n(t)+n)}\big]\frac{\zeta^k}{k_q!}\\
&= 1 + \sum_{l=1}^{\infty} \frac{1}{l!} \sum_{\la_1=1}^{\infty} \cdots \sum_{\la_l=1}^{\infty} \oint \frac{dw_1}{2\pi i} \cdots \oint \frac{dw_\ell}{2\pi i} \det\left[ \tilde{K}_{\zeta}(w_i,\la_i;w_j,\la_j)\right]\\
& =: \det(1+\tilde{K}_{\zeta})_{L^2(\mathbb{Z}_{\geq 0} \times \gamma)},
\end{align*}
where
$$
\tilde{K}_{\zeta}(\la,w,\la',w') = \frac{\zeta^{\la}(1-q)^{\la} e^{(q^{\la}-1)t w} (w;q)_\la^{-n}}{wq^{\la}-w'}.
$$

Notice that the kernel does not depend on $\lambda'$, so the summations can be brought inside yielding
$$
1 + \sum_{l=1}^{\infty} \frac{1}{l!} \oint_{\gamma} \frac{dw_1}{2\pi i} \cdots \oint_{\gamma} \frac{dw_\ell}{2\pi i} \cdot \det\big[\tilde{K}_{\zeta}(w_i,w_j)\big]_{i,j=1}^l
$$
with kernel
$$
\tilde{K}_{\zeta}(w,w') = \sum_{\lambda=1}^{\infty} g(q^{\la}) \big((1-q)\zeta\big)^{\lambda}
$$
and
$$
g(q^{\la}) = \frac{e^{(q^{\la}-1)tw}\left( \frac{ (q^{\la}w;q)_{\infty} }{(w;q)_{\infty}} \right)^n}{wq^{\la}-w'}.
$$
The reason for rewriting in terms of $g$ is that it is now seen to be an analytic function in $\la$ (previously just in $\mathbb{Z}_{\geq 0}$) away from its poles. We could stop here, but we will do one more manipulation to $\tilde{K}_{\zeta}$ and replace the summation by an integral using a Mellin--Barnes representation. This is done with an eyes towards $q\nearrow 1$ asymptotics for which the sum of termwise limits in the sum defining $\tilde{K}_{\zeta}$ fails to be convergent, while the entire sum does have a nice limit. Going to contours away from $\mathbb{Z}_{\geq 0}$ allows us to show this and study the limit.

\begin{lemma} For $g$ satisfying the below conditions and $\zeta$ such that $\left| \zeta\right|<1, \zeta\in \mathbb{C}\backslash \mathbb{R}_+$,
$$
\sum_{n=1}^{\infty} g(q^n)(\zeta)^n = \int_{C_{\infty}} \frac{\pi}{\sin(-\pi s)} (-\zeta)^s g(q^s) \frac{ds}{2\pi i}.
$$
The function $g$ and contour $C_{\infty}$ must satisfy: The left--hand--side is convergent and the right--hand--side must be able to be approximated by integrals over $C_k$ contours which enclose $\{1,\ldots, k\}$ and no other singularities of the right--hand--side integrand and whose symmetric difference from $C_{\infty}$ has integral going to zero as $k$ goes to infinity.
\end{lemma}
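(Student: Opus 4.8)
The plan is to realize the left-hand series as a sum of residues of a single meromorphic integrand and then run the residue theorem on the nested contours $C_k$, passing to the limit $k\to\infty$ at the end. Set
\[
\Phi(s) \;=\; \frac{\pi}{\sin(-\pi s)}\,(-\zeta)^s\, g(q^s),
\]
where $(-\zeta)^s := e^{s\log(-\zeta)}$ with the principal branch of $\log$. This is unambiguous precisely because $\zeta\in\mathbb{C}\setminus\mathbb{R}_+$ forces $-\zeta\in\mathbb{C}\setminus\mathbb{R}_{\le 0}$, so $s\mapsto(-\zeta)^s$ is entire; likewise $q^s=e^{s\log q}$ is entire, so $\Phi$ is holomorphic wherever $g(q^s)$ is and wherever $\sin(-\pi s)\ne 0$.

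First I would read off the poles contributed by $\pi/\sin(-\pi s)$: these lie at $s\in\mathbb{Z}$, are simple, and from the local expansion $\sin(-\pi s)=(-1)^{n+1}\pi(s-n)+O((s-n)^3)$ near $s=n$ one gets $\mathrm{Res}_{s=n}\big(\pi/\sin(-\pi s)\big)=(-1)^{n+1}$. Since $(-\zeta)^n=(-1)^n\zeta^n$ at an integer $s=n$, this gives
\[
\mathrm{Res}_{s=n}\,\Phi(s) \;=\; (-1)^{n+1}(-1)^n\,\zeta^n g(q^n) \;=\; -\,\zeta^n g(q^n).
\]
By hypothesis the contour $C_k$ encircles exactly $\{1,\dots,k\}$ and no other singularity of $\Phi$ — in particular $g(q^s)$ is regular at those points — so the residue theorem yields, with $C_k$ carrying the orientation for which the stated identity is consistent (clockwise around the positive integers, so that the two sign factors above combine to $+1$),
\[
\oint_{C_k}\Phi(s)\,\frac{ds}{2\pi i} \;=\; \sum_{n=1}^{k} g(q^n)\,\zeta^n .
\]

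It then remains to let $k\to\infty$. On the left, the assumption that the integral of $\Phi$ over the symmetric difference of $C_k$ and $C_\infty$ tends to $0$ gives $\oint_{C_k}\Phi\,\frac{ds}{2\pi i}\to\int_{C_\infty}\Phi\,\frac{ds}{2\pi i}$; on the right, the hypothesis that $\sum_{n\ge 1}g(q^n)\zeta^n$ converges (for which $|\zeta|<1$ together with control on $g$ is the typical sufficient condition) identifies $\lim_k\sum_{n=1}^k g(q^n)\zeta^n$ with the series. Equating the two limits proves the lemma. There is no single hard step here: the genuinely analytic difficulty — bounding tails so that both limits exist — has been folded into the statement as a hypothesis. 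The points that do need care are (i) fixing the branch of $(-\zeta)^s$ and checking that $\Phi$ has no spurious singularities on or inside the $C_k$'s, which is exactly what the conditions on $\zeta$ and $g$ supply, and (ii) the sign and orientation bookkeeping, where the factor $(-1)^{n+1}$ from $\pi/\sin(-\pi s)$ and the factor $(-1)^n$ from $(-\zeta)^n$ must be tracked together with the orientation of $C_k$ so that each pole contributes exactly $+\,g(q^n)\zeta^n$.
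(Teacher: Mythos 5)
Your proof is correct and is precisely the residue-calculus argument the paper intends (the lemma is left there as an exercise with exactly that hint). The one delicate point --- that the positively (counterclockwise) oriented $C_k$ would yield $-\sum_{n=1}^{k} g(q^n)\zeta^n$, so the contours must be traversed clockwise around $\{1,\dots,k\}$, consistent with closing the vertical line $\mathrm{Re}(s)=1/2$ to the right in the theorem that follows --- you identify and resolve correctly.
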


\begin{exercise}
Prove this using residue calculus.
\end{exercise}

Applying the lemma brings us to:
\begin{theorem}
For $\left| \zeta\right|$ small enough,
$$
G(\zeta) = \sum_{k=0}^{\infty} \E\big[q^{k(x_n(t)+n)}\big] \frac{\zeta^k}{k_q!} = 1 + \sum_{l=1}^{\infty} \frac{1}{l!} \oint_{\gamma} \frac{dw_1}{2\pi i} \cdots \oint_{\gamma} \frac{dw_\ell}{2\pi i} \det\left[K_{\zeta}(w_i,w_j) \right]_{i,j=1}^l
$$
with
$$
K_{\zeta}(w,w') = \int_{-i\infty+ 1/2}^{i\infty + 1/2} \frac{\pi}{\sin(\pi s)} \big(-(1-q)\zeta\big)^s \frac{e^{(q^{s}-1)tw}\left( \frac{ (q^{s}w;q)_{\infty} }{(w;q)_{\infty}} \right)^n}{wq^{s}-w'}\frac{ds}{2\pi i}.
$$
\end{theorem}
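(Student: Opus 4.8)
The plan is to deduce the statement from the Mellin--Barnes lemma stated just above, applied to the $\lambda$--summation that constitutes the kernel, and then to check that this substitution leaves the Fredholm determinant unchanged. Recall that immediately before the theorem we already have $G(\zeta)=\det(1+\tilde K_\zeta)_{L^2(\gamma)}$ with $\tilde K_\zeta(w,w')=\sum_{\lambda\geq1}g(q^\lambda)\big((1-q)\zeta\big)^\lambda$ and $g(q^\lambda)=\dfrac{e^{(q^\lambda-1)tw}\big((q^\lambda w;q)_\infty/(w;q)_\infty\big)^n}{wq^\lambda-w'}$. Applying the lemma with $\zeta$ replaced by $(1-q)\zeta$ (and using $\sin(-\pi s)=-\sin(\pi s)$ together with the orientation convention there) converts this sum into the Mellin--Barnes integral $K_\zeta(w,w')$ of the statement, provided its hypotheses are arranged. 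So there are three things to carry out: fix the contours $\gamma$ and $C_\infty=\tfrac12+i\R$; verify the lemma's hypotheses uniformly for $(w,w')\in\gamma\times\gamma$; and transfer the resulting identity of kernels through the Fredholm expansion.

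For the contours I would take $\gamma$ to be a small circle about $1$, small enough that $|w|<1$ on $\gamma$ (so $(w;q)_\infty$ is bounded away from $0$ and $\infty$) and that $q^{1/2}|w|<|w'|$ for all $w,w'\in\gamma$. For such fixed $w,w'$, the map $s\mapsto g(q^s)$ extends from $\Z_{\geq0}$ to a meromorphic function of $s\in\C$: $q^s=e^{s\log q}$ is entire, $(q^sw;q)_\infty=\prod_{j\geq0}(1-q^{s+j}w)$ is entire in $s$ (the product converges locally uniformly since $|q^{s+j}w|\to0$), $e^{(q^s-1)tw}$ is entire, and $(w;q)_\infty$ is a nonzero constant; the only poles come from $wq^s=w'$, at $s\in\frac{\log(w'/w)}{\log q}+\frac{2\pi i}{\log q}\Z$, whose real parts are close to $0$ when $\gamma$ is small, hence lie strictly between $0$ and $1$ --- so no pole sits on $C_\infty$ or at a positive integer. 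Thus $C_\infty$ separates $\{1,2,\dots\}$ from the poles of $g(q^\bullet)$, and the truncating contours $C_k$ of the lemma may be taken as rectangles with left side on $C_\infty$ enclosing exactly $\{1,\dots,k\}$. Then I would check the convergence requirements: the $\lambda$--sum converges for small $|\zeta|$ because $g(q^\lambda)\to e^{-tw}/\!\big(-w'(w;q)_\infty^n\big)$ is bounded in $\lambda$ and in $(w,w')$; and on $C_\infty$, where $|q^s|=q^{1/2}$, the function $g(q^s)$ is bounded (its denominator is $\geq|w'|-q^{1/2}|w|>0$ by the choice of $\gamma$), so the integrand decays like $e^{-\pi|\Im s|}\cdot e^{-\arg(-(1-q)\zeta)\Im s}$ with $|\arg(-(1-q)\zeta)|<\pi$ (using $\zeta\in\C\setminus\R_+$), which makes the $C_\infty$ integral absolutely convergent and makes the horizontal sides of $C_k$ contribute $o(1)$ as $k\to\infty$. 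Residue calculus then evaluates $\int_{C_k}$ as $\sum_{n=1}^k g(q^n)\big((1-q)\zeta\big)^n$ --- the residue of $\pi/\sin(\pi s)$ at $s=n$ is $(-1)^n$, cancelling the $(-1)^n$ in $(-(1-q)\zeta)^n$, the overall sign fixed by the lemma's orientation --- and letting $k\to\infty$ gives $\tilde K_\zeta(w,w')=K_\zeta(w,w')$ with $K_\zeta$ exactly as in the statement.

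Finally, since $K_\zeta$ is continuous on the compact set $\gamma\times\gamma$ (dominated convergence for its defining integral, whose integrand is jointly continuous in $(w,w',s)$ and dominated by the exponential bound just established) and agrees pointwise there with $\tilde K_\zeta$, every term $\frac1{l!}\oint_\gamma\cdots\oint_\gamma\det[K_\zeta(w_i,w_j)]_{i,j=1}^l$ coincides with the corresponding $\tilde K_\zeta$--term; summing and invoking the earlier identity $G(\zeta)=\det(1+\tilde K_\zeta)$, valid for $|\zeta|$ small, yields the theorem. I expect the main obstacle to be the contour bookkeeping in the middle step: choosing one small loop $\gamma$ and one vertical line $C_\infty$ for which simultaneously (a) the earlier Fredholm identity is valid, (b) no pole $wq^s=w'$ of $g(q^\bullet)$ intrudes on or between $C_\infty$ and the integers $\{1,2,\dots\}$, and (c) the decay of $\pi/\sin(\pi s)\cdot(-(1-q)\zeta)^s g(q^s)$ is uniform over $(w,w')\in\gamma\times\gamma$, strong enough to kill the truncation error; once this is arranged, the residue arithmetic and the termwise comparison of the two Fredholm series are routine.
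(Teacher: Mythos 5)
Your proposal is correct and follows exactly the paper's route: the paper obtains this theorem simply by applying the Mellin--Barnes lemma stated just before it to the $\lambda$--sum defining $\tilde K_\zeta$, and you supply the hypothesis-checking (choice of $\gamma$ and $C_\infty$, location of the poles of $s\mapsto g(q^s)$, decay of the integrand, and the termwise comparison of the two Fredholm series) that the paper leaves implicit. The only quibble is your claim that the poles of $wq^s=w'$ have real part ``strictly between $0$ and $1$'' --- they are merely close to $0$ (possibly slightly negative), but that still keeps them off $C_\infty$ and away from $\Z_{\geq 1}$, so the argument stands.
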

The important fact is that $t,n,\zeta$ come into this in a very simple way! So, $G(\zeta)$ should contain the distribution of $x_n(t)+n$, but how to get it out? We need an inverse transform of sorts. For this we will go a little deeper into the theory of $q$--series.

In 1949 Hahn \cite{Hahn} introduced two $q$--deformed exponential function:
$$
e_q(x) := \frac{1}{((1-q)x;q)_{\infty}}, \quad E_q(x) = (-(1-q)x;q)_{\infty},
$$
where recall that $(a;q)_{\infty}=(1-a)(1-qa)\cdots$.

\begin{exercise}
Show pointwise convergence of $e_q(x),E_q(x)$ to $e^x$ as $q\nearrow 1$.
\end{exercise}

We will focus on $e_q(x)$. This has a Taylor series expansion for $\vert x\vert$ small
$$
e_q(x) = \sum_{k=0}^{\infty} \frac{x^k}{k_q!},
$$
which is a special case of the $q$--Binomial theorem \cite{AAR}.

\begin{exercise}
For $\vert x\vert<1,$ $\vert q\vert<1$ prove the identity
$$
\sum_{k=0}^{\infty} \frac{(a;q)_k}{(q;q)_k}x^k = \frac{(ax;q)_{\infty}}{(x;q)_{\infty}}.
$$
\end{exercise}

This, along with the fact that $q^{x_n(t)+n}\leq 1$ implies that for $\left| \zeta \right|$ small enough,
$$
\E\left[ e_q(\zeta q^{x_n(t)+n})\right] = \sum_{k=0}^{\infty} \E\left[ q^{k(x_n(t)+n)}\right]\frac{\zeta^k}{k_q!} = G(\zeta) = \det(I+K_{\zeta})_{L^2(\gamma)}.
$$
The interchange of expectation and summation is completely justified for small $\left| \zeta \right|$ and then the left--hand--side and right--hand--side are seen to be analytic in $\zeta \in \mathbb{C}\backslash\{\mathbb{R}_+\}$ thus allowing extension of the equality.

The left--hand--side is called the $e_q$--Laplace transform of $q^{x_n(t)+n}$ and the fact that it is given by a Fredholm determinant \cite{Lax} is quite miraculous and as of yet, not so well understood.

\begin{theorem}
For $\zeta \in \mathbb{C}\backslash \{\mathbb{R}_+\},q<1,t>0,n\geq 1,$
$$
\mathbb{E}\left[ e_q(\zeta q^{x_n(t)+n}) \right] =  \det(I+K_{\zeta})_{L^2(\gamma)}:=1 + \sum_{l=1}^{\infty} \frac{1}{l!} \oint_{\gamma} \cdots \oint_{\gamma} \det\big[K_{\zeta}(w_i,w_j)\big]_{i,j=1}^l
$$
\end{theorem}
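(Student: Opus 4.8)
The plan is to assemble three facts already in place: the $q$-Taylor expansion $e_q(x)=\sum_{k\geq0}x^k/k_q!$ (a case of the $q$-Binomial theorem), the interchange of $\E$ with that series, and the Fredholm-determinant formula $G(\zeta)=\det(I+K_\zeta)_{L^2(\gamma)}$ from the previous theorem. Together these give the identity for $|\zeta|$ small; a short analytic-continuation argument in $\zeta$ then upgrades it to all of $\C\setminus\R_+$.

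\textbf{Step 1 (small $|\zeta|$).} Since $q\in(0,1)$ and $x_n(t)+n\geq0$, we have $q^{x_n(t)+n}\in(0,1]$ almost surely, so for $|\zeta|<1$ the series $e_q(\zeta q^{x_n(t)+n})=\sum_{k\geq0}\zeta^k q^{k(x_n(t)+n)}/k_q!$ converges. For $|\zeta|$ small the partial sums are dominated, uniformly in $\omega$, by the convergent deterministic series $\sum_k|\zeta|^k/k_q!$, so dominated convergence (Fubini--Tonelli) lets us interchange $\E$ with the sum, yielding $\E[e_q(\zeta q^{x_n(t)+n})]=\sum_{k\geq0}\E[q^{k(x_n(t)+n)}]\zeta^k/k_q!=G(\zeta)$. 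By the preceding theorem this equals $1+\sum_{l\geq1}\tfrac1{l!}\oint_\gamma\cdots\oint_\gamma\det[K_\zeta(w_i,w_j)]_{i,j=1}^l=\det(I+K_\zeta)_{L^2(\gamma)}$, establishing the claim for all sufficiently small $|\zeta|$.

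\textbf{Step 2 (analytic continuation in $\zeta$).} Both sides are analytic on the connected open set $\C\setminus\R_+$, so an identity valid near $0$ extends to the whole domain by uniqueness of analytic continuation. On the left, $e_q(x)=1/((1-q)x;q)_\infty$ is holomorphic away from its poles $x\in\{(1-q)^{-1}q^{-m}:m\geq0\}\subset\R_+$; for $\zeta$ in a compact $\mathcal K\subset\C\setminus\R_+$ the argument $\zeta q^{x_n(t)+n}$ stays in a compact subset of $\C\setminus\R_+$ bounded away from those poles, so $|e_q(\zeta q^{x_n(t)+n})|$ is bounded uniformly in $\omega$ and $\zeta\in\mathcal K$, and dominated convergence together with Morera's theorem gives analyticity of $\zeta\mapsto\E[e_q(\zeta q^{x_n(t)+n})]$. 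On the right, one checks the Fredholm series converges locally uniformly: with a uniform bound $|K_\zeta(w,w')|\leq C(\mathcal K)$ for $w,w'\in\gamma$, $\zeta\in\mathcal K$, Hadamard's inequality gives $|\det[K_\zeta(w_i,w_j)]_{i,j=1}^l|\leq l^{l/2}C(\mathcal K)^l$, so the $l$-th term is at most $l^{l/2}(|\gamma|\,C(\mathcal K))^l/l!$ and the sum converges; analyticity in $\zeta$ follows. This completes the proof.

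\textbf{Main obstacle.} Everything structural is inherited from earlier results, so the only real work is the uniform kernel bound $|K_\zeta(w,w')|\leq C(\mathcal K)$ for $\zeta$ in compacts of $\C\setminus\R_+$ and $w,w'\in\gamma$. This requires controlling the Mellin--Barnes integrand $\tfrac{\pi}{\sin(\pi s)}(-(1-q)\zeta)^s\,e^{(q^s-1)tw}\big((q^sw;q)_\infty/(w;q)_\infty\big)^n/(wq^s-w')$ along the vertical contour $\Re s=1/2$, using the exponential decay of $\pi/\sin(\pi s)$ off the real axis and the controlled size of the $q$-Pochhammer ratio and the exponential factor. This is exactly the analytic input needed to validate the Mellin--Barnes lemma in the first place, and it is the step I expect to demand the most care (in particular, choosing $\gamma$ small enough and fixing the vertical line so that no poles of $(w;q)_\infty^{-n}$ or of the $\Gamma$-type factor interfere).
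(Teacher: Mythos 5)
Your proposal is correct and follows essentially the same route as the paper: expand $e_q$ via its $q$-Taylor series, interchange expectation and summation for $|\zeta|$ small (justified by $q^{x_n(t)+n}\leq 1$) to identify the left-hand side with $G(\zeta)=\det(I+K_\zeta)_{L^2(\gamma)}$ from the preceding theorem, then extend to all of $\C\setminus\R_+$ by analyticity of both sides. Your Step 2 merely spells out (via Hadamard's inequality and Morera) the analyticity claims that the paper asserts without detail.
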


We close this section by demonstrating how to invert the $e_q$--Laplace transform.

\begin{definition}
For a function $f\in \ell^1(\mathbb{Z}_{\geq 0})$ define for $\zeta \in \mathbb{C}\backslash \{q^{-m}\}_{m\geq 0}$
$$
\hat{f}(\zeta) := \sum_{n=0}^{\infty} \frac{f(n)}{(\zeta q^n;q)_{\infty}} \quad \left( = \E^f\left[ e_q\left( \frac{\zeta}{1-q}q^n\right)\right]\right).
$$
with $n$ distributed according to the measure $f$.
\end{definition}

\begin{proposition}
The $e_{q}$-Laplace transform is inverted by
$$
f(n) = -q^n \int (q^{n+1}\zeta;q)_{\infty} \hat{f}^q(\zeta) \frac{d\zeta}{2\pi i}
$$
with the $\zeta$ contour containing only $\zeta=q^{-m},0\leq m\leq n$ poles.
\end{proposition}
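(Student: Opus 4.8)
The plan is to substitute the definition of $\hat f$ into the claimed inversion formula, interchange the (absolutely convergent) sum over the index with the contour integral, and evaluate the resulting integrals by residues, showing that every term but one vanishes. Write $\hat f(\zeta)=\sum_{m\ge 0} f(m)/(\zeta q^m;q)_\infty$. For $\zeta$ on the prescribed closed contour, which avoids all the points $q^{-j}$, $j\ge 0$, the factor $(\zeta q^m;q)_\infty^{-1}$ is bounded uniformly in $m\ge 0$ (it is a fixed nonvanishing continuous function on a compact set for each of the finitely many small $m$, and tends to $1$ as $m\to\infty$), and $(q^{n+1}\zeta;q)_\infty$ is likewise bounded on the contour; since $f\in\ell^1(\Z_{\ge 0})$, Fubini applies and
\[
-q^n\int (q^{n+1}\zeta;q)_\infty\,\hat f(\zeta)\,\frac{d\zeta}{2\pi i}
= -q^n\sum_{m\ge 0} f(m)\int \frac{(q^{n+1}\zeta;q)_\infty}{(q^m\zeta;q)_\infty}\,\frac{d\zeta}{2\pi i}.
\]
Thus it suffices to prove that the inner integral equals $-q^{-n}\delta_{m,n}$.

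Next I would simplify the ratio of $q$-Pochhammer symbols by splitting the infinite products. For $m\le n$ one has $(q^m\zeta;q)_\infty=(q^m\zeta;q)_{n-m+1}\,(q^{n+1}\zeta;q)_\infty$, hence $\dfrac{(q^{n+1}\zeta;q)_\infty}{(q^m\zeta;q)_\infty}=\big((q^m\zeta;q)_{n-m+1}\big)^{-1}$, a rational function of $\zeta$ with simple poles exactly at $\zeta=q^{-m},q^{-m-1},\dots,q^{-n}$ and decay of order $\zeta^{-(n-m+1)}$ at infinity. For $m>n$ the same splitting gives $\dfrac{(q^{n+1}\zeta;q)_\infty}{(q^m\zeta;q)_\infty}=(q^{n+1}\zeta;q)_{m-n-1}$, a polynomial, hence an entire function.

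Now evaluate. For $m>n$ the integrand is holomorphic inside the contour, so the integral vanishes. For $m\le n$, the poles $q^{-m},\dots,q^{-n}$ all lie among the enclosed points $q^{0},\dots,q^{-n}$, so the integral equals the sum of all finite residues of $\big((q^m\zeta;q)_{n-m+1}\big)^{-1}$, which by the residue-at-infinity theorem is minus its residue at $\infty$. If $m<n$ the function decays at least like $\zeta^{-2}$, so that residue is $0$ and the integral vanishes; if $m=n$ the function is $(1-q^n\zeta)^{-1}$, whose only residue is $-q^{-n}$ at $\zeta=q^{-n}$ (taking the contour counterclockwise, consistent with the $\tfrac{1}{2\pi i}\oint$ convention used elsewhere in the text), so the integral equals $-q^{-n}$. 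Substituting back, the double sum collapses to $-q^n f(n)\cdot(-q^{-n})=f(n)$, which is the assertion.

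The only point needing genuine care — the ``main obstacle,'' such as it is — is the bookkeeping around the contour: checking that it encloses precisely the poles $q^{-j}$ with $j\le n$ and none of $q^{-n-1},q^{-n-2},\dots$, so that the $m\le n$ integrals pick up all finite residues and the $m>n$ integrals pick up none, together with tracking the order of decay at infinity so that the residue-at-infinity argument is valid. Everything else reduces to the elementary $q$-Pochhammer identities above and the uniform bound justifying the interchange of sum and integral.
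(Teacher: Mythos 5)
Your proof is correct, and it is exactly the argument the paper intends: the text leaves this proposition as an exercise (``Prove this via residues''), and your route --- substitute the definition of $\hat f$, justify the interchange of sum and integral via the $\ell^1$ hypothesis and the uniform bound on $(\zeta q^m;q)_\infty^{-1}$ over the contour, split the $q$-Pochhammer ratio into a finite product, and kill all terms except $m=n$ by the residue-at-infinity count --- is the standard residue computation being asked for. The bookkeeping (poles at $\zeta=q^{-m},\dots,q^{-n}$ all enclosed for $m\le n$, entire integrand for $m>n$, residue $-q^{-n}$ at $m=n$) all checks out.
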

\begin{exercise}
Prove this via residues.
\end{exercise}

So we have found a rather concise (and analyzable) formula for the probability distribution of $x_n(t)+n,$ from which we can perform asymptotics. In fact, for our applications we can work with the $e_q$--Laplace transform instead of its inversion.

For instance, applying our knowledge from Theorem \ref{thmlimits} of how $q$--TASEP goes to the semi--discrete SHE 
\index{Semi-discrete stochastic heat equation (SHE)}%
\index{Stochastic heat equation (SHE)!semi-discrete}%
and performing some asymptotic analysis we prove:
\begin{theorem}[\cite{BorCor,BCF}]
For semi--discrete SHE with $z_0(n) = \mathbf{1}_{n=1}$ and for $\mathrm{Re}(u) \geq 0,$
$$
\E\left[ e^{-ue^{\frac{3\tau}{2}z(\tau,n)}} \right] = 1+ \sum_{l=1}^{\infty} \frac{1}{l!} \oint_{\gamma} \frac{dv_1}{2\pi i} \cdots \oint_{\gamma} \frac{dv_l}{2\pi i} \det\left[ K_u(v_i,v_j)\right]_{i,j=1}^l
$$
with contour $\gamma$ a small circle around 0 and
\begin{align*}
K_u(v,v') &= \int_{-i\infty + 1/2}^{i\infty + 1/2} \frac{\pi}{\sin(-\pi s)} \frac{g(v)}{g(v+s)}\frac{ds}{v+s-v'}\\
g(z) &= \big(\Gamma(z)\big)^nu^{-z} e^{-\tau z^2/2}.
\end{align*}
\end{theorem}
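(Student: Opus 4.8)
\emph{Proof proposal.} The plan is to derive this formula as the $q\nearrow 1$ degeneration of the $e_q$--Laplace transform Fredholm determinant for step initial data $q$--TASEP obtained just above, using the scaling of Theorem~\ref{thmlimits}. Set $q=e^{-\epsilon}$ and $t=\epsilon^{-2}\tau$. By Theorem~\ref{thmlimits}, $z_\epsilon(\tau;n)=q^{x_n(\epsilon^{-2}\tau)}e^{\epsilon^{-1}\tau}\epsilon^{n-1}e^{-3\tau/2}$ converges weakly to $z(\tau;n)$, so, since $q^n=e^{-\epsilon n}=1+o(1)$,
$$
q^{x_n(t)+n}=\bigl(1+o(1)\bigr)\,\epsilon^{\,1-n}e^{-\epsilon^{-1}\tau}e^{3\tau/2}\,z_\epsilon(\tau;n).
$$
Hence, choosing the transform variable as $\zeta=\zeta(\epsilon):=-u\,\epsilon^{\,n-1}e^{\epsilon^{-1}\tau}$, we get $\zeta(\epsilon)\,q^{x_n(t)+n}\to -u\,e^{3\tau/2}z(\tau;n)$, and since $e_q(x)\to e^x$ (an exercise above), one expects $\E\bigl[e_q(\zeta(\epsilon)q^{x_n(t)+n})\bigr]\to\E\bigl[e^{-ue^{3\tau/2}z(\tau;n)}\bigr]$. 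For $\mathrm{Re}(u)\geq 0$ the target is bounded in modulus by $1$ because $z\geq 0$; after the analytic continuation in $\zeta$ already recorded, the left side is likewise the expectation of a uniformly bounded random variable, so this step follows from weak convergence together with uniform integrability, the only subtlety being the $\epsilon$--dependence of $\zeta(\epsilon)$ (which is why one needs the locally uniform version of $e_q(x)\to e^x$ and a priori tail control of $x_n(t)$).

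\textbf{Identifying the limiting kernel.} The real work is to take the limit of $\det(I+K_{\zeta(\epsilon)})_{L^2(\gamma)}$. In each $w$--contour integral I substitute $w=q^{v}=e^{-\epsilon v}$, which carries the small circle around $1$ to a small circle $\gamma$ around $0$ and gives $\frac{dw}{2\pi i}=-\epsilon e^{-\epsilon v}\frac{dv}{2\pi i}$, keeping the Mellin--Barnes variable $s$ on the line $\tfrac12+i\R$. Then I feed in three $q$--asymptotics: the $q$--Gamma limit
$$
\frac{(q^{s+v};q)_\infty}{(q^{v};q)_\infty}=(1-q)^{-s}\frac{\Gamma_q(v)}{\Gamma_q(s+v)}\longrightarrow(1-q)^{-s}\frac{\Gamma(v)}{\Gamma(v+s)},
$$
so that $\bigl((q^sw;q)_\infty/(w;q)_\infty\bigr)^n\sim(1-q)^{-ns}(\Gamma(v)/\Gamma(v+s))^n$; the expansion
$$
(q^s-1)tw=(e^{-\epsilon s}-1)\,\epsilon^{-2}\tau\,e^{-\epsilon v}=-\epsilon^{-1}\tau s+\tau vs+\tfrac{\tau}{2}s^2+o(1);
$$
and $\bigl(-(1-q)\zeta(\epsilon)\bigr)^s= u^{s}\,e^{\epsilon^{-1}\tau s}\,\epsilon^{ns}\bigl(1+o(1)\bigr)$. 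The divergent factors $e^{\mp\epsilon^{-1}\tau s}$ cancel, as do the powers $\epsilon^{\mp ns}$; the denominator becomes $wq^s-w'=-\epsilon(v+s-v')(1+o(1))$, and the leftover $\epsilon^{-1}$ per kernel entry is absorbed against the $\epsilon$'s produced by $\frac{dw_1}{2\pi i}\cdots\frac{dw_l}{2\pi i}$ inside the $l$--th term of the Fredholm series. With $g(z):=(\Gamma(z))^nu^{-z}e^{-\tau z^2/2}$ one checks $u^{s}e^{\tau vs+\tau s^2/2}(\Gamma(v)/\Gamma(v+s))^n=g(v)/g(v+s)$, so, tracking the signs and the $2\pi i$'s, $K_{\zeta(\epsilon)}(v,v')$ converges pointwise to $K_u(v,v')$ and each Fredholm term converges to the corresponding term of $\det(I+K_u)_{L^2(\gamma)}$.

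\textbf{Upgrading to the determinants.} To turn the termwise convergence into convergence of the series I would establish, uniformly in $\epsilon$ small: a bound $|K_{\zeta(\epsilon)}(v,v')|\leq C$ for $v,v'\in\gamma$, together with Gaussian decay $|K_{\zeta(\epsilon)}(v,v')|\leq Ce^{-c\,\mathrm{Im}(s)^2}$ along the $s$--line inherited from the $e^{-\tau s^2/2}$ factor (which keeps the $s$--integral absolutely convergent and tight). Hadamard's inequality then gives $\bigl|\det[K_{\zeta(\epsilon)}(v_i,v_j)]_{i,j=1}^l\bigr|\leq l^{l/2}C^l$, so the $l$--th term of the expansion is dominated by $\tfrac{l^{l/2}(C|\gamma|)^l}{l!}$, a summable bound independent of $\epsilon$; dominated convergence for the series then yields $\det(I+K_{\zeta(\epsilon)})_{L^2(\gamma)}\to\det(I+K_u)_{L^2(\gamma)}$, and matching the two limits proves the theorem.

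\textbf{Main obstacle.} The hard part is precisely the uniform--in--$q$ analysis: one must choose the contour $\gamma$ and the $s$--line once and for all so that the poles $v+s-v'=0$, the zeros of $1/\Gamma$--type factors, and all singularities of the pre--limit kernel are avoided with room to spare, and one must control the $q$--Pochhammer and $q$--Gamma factors away from their (proliferating) poles uniformly as $\epsilon\downarrow0$; combined with the fact that $e_q(x)\to e^x$ only locally uniformly, this is where the hypothesis $\mathrm{Re}(u)\geq0$ and quantitative tail estimates for $x_n(t)$ become essential.
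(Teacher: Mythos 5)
Your proposal follows exactly the route the paper indicates: take the $q\nearrow 1$ limit of the $e_q$--Laplace transform Fredholm determinant for step initial data $q$--TASEP under the scaling of Theorem~\ref{thmlimits}, via the substitution $w=q^v$, the $q$--Gamma asymptotics, the cancellation of the divergent factors $e^{\mp\epsilon^{-1}\tau s}$ and $\epsilon^{\mp ns}$, and the identity $u^{s}e^{\tau vs+\tau s^2/2}\bigl(\Gamma(v)/\Gamma(v+s)\bigr)^n=g(v)/g(v+s)$, all of which check out. The paper itself only gestures at this argument (deferring the details to \cite{BorCor,BCF}), so your sketch is if anything more explicit, and the points you flag as the main obstacles --- the fixed choice of contours, the uniform-in-$q$ kernel bounds feeding Hadamard's inequality, and the uniform integrability needed to pass $\E\bigl[e_q(\zeta(\epsilon)q^{x_n(t)+n})\bigr]$ to the limit --- are precisely where the cited references do the real work.
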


Note that had we tried to prove this theorem directly from our moment formulas we would have failed. The moment generating function is a divergent series. It is only by work at the $q$--level and explicitly summing the series that we could then take $q\nearrow 1$ and prove this result.

Here is one application:
\begin{theorem} [\cite{BorCor,BCF}]
For all $\nu>0$\index{Tracy-Widom distribution}%
$$
\lim_{\tau \rightarrow \infty} \mathbb{P}\left( \frac{\log z(\tau,\nu\tau) - \tau\tilde{\gamma}_1(\nu)}{d(\nu)\tau^{1/3}}\leq s \right) = F_{\text{GUE}}(s),
$$
where $d(\nu) = \big(-\nu\psi''(s(\nu))/2\big)^{1/3}$ with $s(\nu) = \mathrm{arg}\inf_{s>0} \big(s-\nu\psi(s)\big).$ Here, $\psi(s)=(\log\Gamma)'(s)$ is the digamma function and, as before,
$$
\tilde{\gamma}_1(\nu) = -\frac{3}{2} + \inf_{s>0}(s-\nu\psi(s))
$$
is the almost sure Lyapunov exponent. \index{Lyapunov exponent}%
\end{theorem}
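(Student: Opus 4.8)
The plan is to specialize the Fredholm determinant formula for the $u$--Laplace transform of $e^{3\tau/2}z(\tau,n)$ (the preceding theorem), carry out a steepest descent analysis of the kernel, and then pass from the transform back to the distribution function. Throughout I write $S$ for the real point of the statement (renamed from $s$, to avoid a clash with the integration variable $s$ inside the kernel).

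First I would reduce to the transform. Set $n=\nu\tau$ and
\[
u=u(S,\tau):=\exp\!\Bigl(-\tfrac{3\tau}{2}-\tau\tilde\gamma_1(\nu)-S\,d(\nu)\,\tau^{1/3}\Bigr),
\]
which is a positive real, so $\Re(u)\ge 0$ and the formula applies. With $W_\tau:=\bigl(\log z(\tau,\nu\tau)-\tau\tilde\gamma_1(\nu)\bigr)/\bigl(d(\nu)\tau^{1/3}\bigr)$ one has $u\,e^{3\tau/2}z(\tau,\nu\tau)=\exp\bigl(d(\nu)\tau^{1/3}(W_\tau-S)\bigr)$, hence $\E\bigl[e^{-u e^{3\tau/2}z(\tau,\nu\tau)}\bigr]=\E\bigl[\exp(-e^{d(\nu)\tau^{1/3}(W_\tau-S)})\bigr]$. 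The map $x\mapsto\exp(-e^{d(\nu)\tau^{1/3}(x-S)})$ is decreasing and converges pointwise to $\mathbf 1_{x\le S}$ as $\tau\to\infty$; a soft sandwiching (bound it below by $c_\tau\mathbf 1_{x\le S_-}$ and above by $\mathbf 1_{x\le S_+}+c'_\tau$ for $S_-<S<S_+$, with $c_\tau\to1$ and $c'_\tau\to0$) shows that if $\det(I+K_{u(S,\tau)})_{L^2(\gamma)}$ converges as $\tau\to\infty$ to a continuous function $F(S)$, then $W_\tau\Rightarrow F$. So it suffices to prove $\det(I+K_{u(S,\tau)})_{L^2(\gamma)}\to F_{\text{GUE}}(S)$.

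Second, I would identify the saddle. After the substitutions the $s$--integrand in $K_{u(S,\tau)}(v,v')$ takes the form $\dfrac{\pi}{\sin(-\pi s)}\,\dfrac{e^{\tau(F(v)-F(v+s))-S d(\nu)\tau^{1/3}s}}{v+s-v'}$, where $F(z)=\nu\log\Gamma(z)-\tfrac12 z^2+cz$ with $c=\tfrac32+\tilde\gamma_1(\nu)=z_c-\nu\psi(z_c)$ and $z_c=s(\nu)=\arg\inf_{s>0}(s-\nu\psi(s))$. Using $\psi'(z_c)=1/\nu$ one checks $F'(z_c)=F''(z_c)=0$ and $F'''(z_c)=\nu\psi''(z_c)=-2d(\nu)^3<0$, so $z_c$ is a coalesced double critical point with exactly the cubic local behavior that produces an Airy kernel. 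I would deform the small circle $\gamma$ (the $v,v'$ contour) and the Mellin--Barnes $s$--contour to contours passing through $z_c$ at the steepest-descent angles $\pm\pi/3$ dictated by the cubic term, checking that no poles are crossed (the $\Gamma$ factors have poles only on $\Z_{\le0}$, and $\sin(-\pi s)$ only on the integers), or else tracking the finitely many residues that are. Then I rescale $v-z_c=\tau^{-1/3}d(\nu)^{-1}a$, $v'-z_c=\tau^{-1/3}d(\nu)^{-1}a'$, $s=\tau^{-1/3}d(\nu)^{-1}b$: Taylor expanding $F$ at $z_c$ gives $\tau\bigl(F(v)-F(v+s)\bigr)-Sd(\nu)\tau^{1/3}s\to\tfrac13(a+b)^3-\tfrac13 a^3-Sb$, while $\tfrac{\pi}{\sin(-\pi s)}\sim -d(\nu)\tau^{1/3}/b$ and $\tfrac1{v+s-v'}\sim d(\nu)\tau^{1/3}/(a+b-a')$; collecting the Jacobian $\tau^{-1/3}d(\nu)^{-1}$ and the prefactors, the rescaled kernel $\tau^{-1/3}d(\nu)^{-1}K_{u(S,\tau)}(v,v')$ converges, after the standard conjugation, to the Airy kernel on $L^2(S,\infty)$. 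To promote this pointwise statement to convergence of Fredholm determinants I would establish $\tau$--uniform bounds: along the chosen contours strict concavity of $\Re F$ away from $z_c$ yields $e^{-c|a|^3}$--type decay of the rescaled kernel, and the portions of the contours bounded away from $z_c$ contribute negligibly; Hadamard's inequality then provides a summable dominating bound for the Fredholm series, and dominated convergence gives $\det(I+K_{u(S,\tau)})_{L^2(\gamma)}\to\det(I-K_{\text{Ai}})_{L^2(S,\infty)}=F_{\text{GUE}}(S)$. Since $F_{\text{GUE}}$ is continuous, the first step then finishes the proof.

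The main obstacle is the steepest-descent step: one must exhibit \emph{global} contours through $z_c$ along which $\Re\bigl(F(v)-F(v+s)\bigr)$ is controlled all the way out, not merely near $z_c$, which is delicate because $\log\Gamma$ grows, the original contour $\gamma$ is a small circle possibly far from $z_c$, and the $S$--dependent linear term together with the Mellin--Barnes factor must be accommodated simultaneously; and one must make the decay estimates uniform in $\tau$ so that interchanging the limit with the Fredholm sum is legitimate. Everything else — the Laplace-to-distribution lemma, the Taylor expansion at $z_c$, and the algebra producing the Airy kernel — is routine once those contours and bounds are in place.
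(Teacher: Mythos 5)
Your proposal is correct in outline and follows exactly the route the paper intends (and that the cited references \cite{BorCor,BCF} actually carry out): specialize $u$ in the preceding Fredholm determinant theorem to the critical scale, pass from the $u$--Laplace transform to the distribution function via the sandwiching lemma, and perform steepest descent at the coalesced double critical point $z_c=s(\nu)$ where $F'(z_c)=F''(z_c)=0$ and $F'''(z_c)=\nu\psi''(z_c)=-2d(\nu)^3$, yielding the Airy kernel. The paper itself states this theorem without proof, and the obstacles you flag (globally valid steepest-descent contours for the $v$ and Mellin--Barnes variables, plus $\tau$-uniform bounds legitimizing the termwise limit of the Fredholm series) are precisely where the real work in the cited references lies.
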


This describes (in the parabolic Anderson model)\index{Parabolic Anderson Model}%
the typical mass density fluctuations or the particle location fluctuations for a continuous space limit of $q$--TASEP and is consistent with the KPZ universality class belief.

Finally, utilizing Theorem \ref{Thmoy}, one can prove
\begin{theorem}[\cite{ACQ,BCF}]
Consider $z$, the solution to the SHE with $z_0(x)=\delta_{x=0}$. For all $S$ with $\text{Re}(S)\geq 0$,
$$\E\Big[e^{-S e^{t/24} z(t,0)}\Big]  =  \det(1 - K_S)_{L^2(\R_+)}$$
with kernel
$$K_S(\eta,\eta') = \int_{\R} dr \frac{S}{S+e^{-r (t/2)^{1/3}}} \Ai(r+\eta)\Ai(r+\eta').$$

Additionally,
$$
\lim_{t\to \infty} \mathbb{P}\left(\frac{\log z(t,0)  + t/24}{(t/2)^{1/3}} \leq r \right)  = F_{{\rm GUE}}(r)
$$
where $F_{{\rm GUE}}$ is the GUE Tracy-Widom distribution (see Appendix \ref{TATW}).
\end{theorem}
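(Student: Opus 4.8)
The plan is to obtain the continuum formula as the $N\to\infty$ scaling limit of the semi-discrete Fredholm determinant in the preceding theorem, transported through Theorem \ref{Thmoy}, and then to read off the Tracy--Widom statement by a further $t\to\infty$ asymptotic analysis of the resulting kernel. For the first step: combining Theorem \ref{Thmoy} with the Brownian scaling identity $Z^{\beta}(T,N)\overset{d}{=}\beta^{-2(N-1)}Z^{1}(\beta^{2}T,N)$ (reducing general inverse temperature to $\beta=1$) and the deterministic Wick correction relating $Z^{1}$ to the semi-discrete SHE $z_{\rm sd}$ with $z_{0}(n)=\mathbf 1_{n=1}$, there are explicit choices $\tau=\tau(N),n=n(N)$, both tending to infinity, and explicit positive constants $c_{N}$ such that $c_{N}\,z_{\rm sd}(\tau(N),n(N))\to z(t,0)$ as $N\to\infty$ with $L^{2}(\Omega,\mathcal F,\mathbb{P})$ convergence, where $z$ is the SHE with $\delta_{x=0}$ data. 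Fixing $S$ with $\mathrm{Re}(S)\geq 0$ and setting $u_{N}:=S e^{t/24}c_{N}e^{-3\tau(N)/2}$ (so $\mathrm{Re}(u_{N})\geq 0$, and the preceding theorem applies), the fact that $x\mapsto e^{-u_{N}x}$ is bounded and continuous on $\R_{\geq 0}$ together with strict positivity of $z_{\rm sd}$ and $z$ gives
\[
\lim_{N\to\infty}\det(1+K_{u_{N}})_{L^{2}(\gamma)}=\lim_{N\to\infty}\E\big[e^{-u_{N}e^{3\tau(N)/2}z_{\rm sd}(\tau(N),n(N))}\big]=\E\big[e^{-S e^{t/24}z(t,0)}\big].
\]

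The heart of the argument is to identify the left-hand limit with $\det(1-K_{S})_{L^{2}(\R_{+})}$. Writing $g(z)=(\Gamma(z))^{n}u^{-z}e^{-\tau z^{2}/2}$ as in the preceding theorem and inserting the scalings, Stirling's formula turns $\log g$ into an effective exponent whose derivative has, in the narrow-wedge window, a coalescing pair of critical points, and the local cubic expansion is what manufactures the Airy function. Concretely I would (i) locate the (double) critical point $z_{c}>0$ and deform both the small circle $\gamma$ carrying the $w$-variables and the Mellin--Barnes line $\mathrm{Re}(s)=1/2$ to steepest-descent contours through $z_{c}$, bookkeeping the residues that are crossed; (ii) rescale $w=z_{c}+(t/2)^{-1/3}\widetilde w$ and $s$ similarly, so that the Mellin--Barnes integrand converges to the contour representation of $\Ai$, the factor $\pi/\sin(\pi s)$ becomes $S/(S+e^{-r(t/2)^{1/3}})$, and a cyclicity identity for Fredholm determinants carries $\det(1+\,\cdot\,)_{L^{2}(\gamma)}$ into $\det(1-K_{S})_{L^{2}(\R_{+})}$; (iii) establish uniform exponential decay of the integrands along the tails of the deformed contours, and produce Hadamard-type trace-norm bounds on the kernels uniform in the Fredholm expansion order $l$, so that dominated convergence yields convergence of the whole series. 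Combined with the previous display, this proves the first assertion of the theorem.

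For the $t\to\infty$ statement, substitute $S=e^{-(t/2)^{1/3}r}$ into the identity just established. On the probabilistic side, $e^{-S e^{t/24}z(t,0)}=\exp\!\big(-\exp\{(t/2)^{1/3}(Y_{t}-r)\}\big)$ with $Y_{t}=\big(\log z(t,0)+t/24\big)/(t/2)^{1/3}$; this is bounded by $1$ and, for every $\delta>0$, is squeezed between $(1-o(1))\mathbf 1\{Y_{t}\leq r-\delta\}$ and $\mathbf 1\{Y_{t}\leq r+\delta\}+o(1)$ as $t\to\infty$, a routine ``Laplace transform determines the CDF'' argument. On the determinant side, $S/(S+e^{-r'(t/2)^{1/3}})=(1+e^{(t/2)^{1/3}(r-r')})^{-1}\to\mathbf 1\{r'>r\}$, so $K_{S}(\eta,\eta')\to\int_{r}^{\infty}\Ai(\eta+v)\Ai(\eta'+v)\,dv$, and with uniform trace-norm control (the Airy kernel decays super-exponentially) dominated convergence gives $\det(1-K_{S})_{L^{2}(\R_{+})}\to\det(1-K_{{\rm Ai}})_{L^{2}(r,\infty)}=F_{{\rm GUE}}(r)$ after the shift $\eta\mapsto\eta-r$. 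Letting $\delta\to0$ and using continuity of $F_{{\rm GUE}}$ then gives $\mathbb{P}(Y_{t}\leq r)\to F_{{\rm GUE}}(r)$ for every $r$.

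I expect the main obstacle to be part (iii): simultaneously deforming the nested $w$- and $s$-contours into steepest-descent position while preserving analyticity (or carefully accounting for the poles crossed), and upgrading the pointwise convergence of the rescaled kernel to trace-norm convergence of the Fredholm determinants uniformly in the expansion order. Everything else --- locating the critical points, the Stirling expansions, the cyclicity identity, and the soft $t\to\infty$ step --- is comparatively routine.
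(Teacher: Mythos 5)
Your proposal is correct and follows exactly the route the paper indicates: the paper itself offers no detailed proof of this theorem, stating only that it follows from the semi-discrete Fredholm determinant formula ``utilizing Theorem \ref{Thmoy}'' and citing \cite{ACQ,BCF}, which is precisely your strategy of transporting the $\det(1+K_u)_{L^2(\gamma)}$ identity through the polymer-to-SHE limit, performing the Stirling/steepest-descent analysis on $g(z)=(\Gamma(z))^n u^{-z}e^{-\tau z^2/2}$ to produce the Airy kernel with the Fermi factor, and then extracting $F_{\rm GUE}$ via the substitution $S=e^{-(t/2)^{1/3}r}$ and the double-exponential squeeze. You have also correctly identified where the real labor lies (the contour deformations and uniform trace-norm control), which is exactly the content deferred to \cite{BCF}.
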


\section{$q$--Boson spectral theory}\label{secspec}
Recall that $q$--Boson particle system in $\vec{n}=(n_1 \geq \ldots \geq n_k)$ coordinates has backwards generator
$$
(L^{\text{q--Boson}}f)(\vec{n}) = \sum_{\text{cluster } i} (1-q^{c_i})(f(\vec{n}_{c_1+\cdots c_i}^-) - f(\vec{n})).
$$
Also recall the generator of $k$ free (distant) particles is
$$
(\mathcal{L}u)(\vec{n}) = \sum_{i=1}^k (\nabla_i u)(\vec{n}),
$$
where $\nabla_i f (\vec{n}) = f(n-1)-f(n)$ in $n_i$ variable.

We say that $u$ satisfies the boundary conditions if
$$
(\nabla_i - q\nabla_{i+1})u \vert_{n_i=n_{i+1}} = 0,
$$
for $1\leq i \leq k-1$.

The question we now confront is how to find the left and right eigenfunctions of $L^{\text{q--Boson}}$. We have already seen the below essentially contained in the proof of Proposition \ref{partB}.

\begin{proposition}
If $u:\Z^k\rightarrow\C$ is an eigenfunction for $\mathcal{L}$ with eigenvalue $\lambda$, and $u$ satisfies the boundary conditions, then $u$ is an eigenfunction of $L^{\text{q--Boson}}$ with eigenvalue $\lambda$.
\end{proposition}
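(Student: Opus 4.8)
The plan is to establish the pointwise identity $(\mathcal L u)(\vec n)=(L^{\text{q--Boson}}u)(\vec n)$ for every $\vec n\in W_{\geq 0}^k$, valid for any $u\colon\Z^k\to\C$ satisfying the boundary conditions; the eigenfunction claim then follows at once, because $\mathcal L u=\lambda u$ holds on all of $\Z^k$ and in particular on $W_{\geq 0}^k$, so $L^{\text{q--Boson}}u=\lambda u$ there as well. Here I will use the free generator in the normalization $(\mathcal L u)(\vec n)=\sum_{i=1}^k (1-q)(\nabla_i u)(\vec n)$ of Proposition \ref{partB} (the version displayed in this section differs only by the harmless overall scalar $1-q$, which merely rescales $\lambda$). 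It is worth noting that the hypothesis genuinely needs $u$ to be defined off the chamber: evaluating $\nabla_i u$ on a wall of $W_{\geq 0}^k$ calls for a value of $u$ one lattice step outside it, which is exactly why $u$ is taken on $\Z^k$.

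First I would fix $\vec n=(n_1\geq\cdots\geq n_k)\in W_{\geq 0}^k$ and split it into its maximal runs of equal coordinates, the clusters. Labeling the clusters from left to right, say cluster $i$ occupies the indices $b_{i-1}+1,\dots,b_i$, where $b_0=0$, $b_i=c_1+\cdots+c_i$, and $c_i$ is the size of cluster $i$; thus $n_{b_{i-1}+1}=\cdots=n_{b_i}$ and these common cluster values strictly decrease in $i$. For each index $a$ with $b_{i-1}+1\le a\le b_i-1$ one has $n_a=n_{a+1}$, so the boundary condition $(\nabla_a-q\nabla_{a+1})u=0$ gives $\nabla_a u(\vec n)=q\,\nabla_{a+1}u(\vec n)$; iterating this down to the last index of the cluster yields $\nabla_a u(\vec n)=q^{\,b_i-a}\,\nabla_{b_i}u(\vec n)$.

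Next I would assemble these relations. Over a single cluster, the geometric sum gives
$$
\sum_{a=b_{i-1}+1}^{b_i}(1-q)\,\nabla_a u(\vec n)=(1-q)\,\nabla_{b_i}u(\vec n)\sum_{m=0}^{c_i-1}q^{m}=(1-q^{c_i})\,\nabla_{b_i}u(\vec n),
$$
and summing over all clusters,
$$
(\mathcal L u)(\vec n)=\sum_{a=1}^{k}(1-q)\,\nabla_a u(\vec n)=\sum_i(1-q^{c_i})\,\nabla_{b_i}u(\vec n).
$$
By the definition of $L^{\text{q--Boson}}$ in the $\vec n$–coordinates, its $i$-th term is $(1-q^{c_i})\bigl(u(\vec n_{b_i}^{-})-u(\vec n)\bigr)$, where $\vec n_{b_i}^{-}$ lowers the $b_i$-th coordinate by one — and this remains in $W_{\geq 0}^k$ precisely because consecutive cluster values differ by at least one. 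Since $\nabla_{b_i}u(\vec n)=u(\vec n_{b_i}^{-})-u(\vec n)$, the last display is exactly $(L^{\text{q--Boson}}u)(\vec n)$, completing the identity and hence the proof.

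I do not anticipate a substantive obstacle: the argument is essentially the telescoping flagged in the hint to the exercise after Proposition \ref{partB}. The only points requiring care are bookkeeping ones — lining up the cluster indices so that exactly the available boundary relations (one for each adjacent equal pair inside a cluster) are used, and verifying that the move $\vec n\mapsto\vec n_{b_i}^{-}$ does not exit the chamber, which holds because the cluster values are strictly separated. The ``nothing exits the bottom'' stipulation in the definition of $L^{\text{q--Boson}}$ (a last cluster sitting at value $0$) is a boundary convention and is immaterial to the algebra, since $u$ lives on all of $\Z^k$.
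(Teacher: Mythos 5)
Your proof is correct and is exactly the argument the paper intends: it defers to the proof of Proposition \ref{partB} and its accompanying exercise, whose hint is precisely to use the boundary conditions to collapse $\sum_{a\in\text{cluster}}(1-q)\nabla_a$ into $(1-q^{c_i})\nabla_{b_i}$ and thereby identify the free evolution with $L^{\text{q--Boson}}$ on the chamber. Your write-up just carries out that telescoping explicitly, cluster by cluster, and your side remarks (the $(1-q)$ normalization discrepancy, staying inside the chamber, needing $u$ off the chamber) are all accurate.
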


In order to find such eigenfunctions we will use another idea going back to Bethe \cite{Bethe} in 1931 (see also \cite{LL}). This idea goes under the name of coordinate Bethe ansatz and takes the following general form. \index{Bethe ansatz}%

\begin{proposition}
Consider a space $X$. Eigenfunctions for a sum of 1d operators acting on $\{\text{functions on }X\}$
$$
(\mathcal{L}\psi)(\vec{x}) = \sum_{i=1}^k (L_i \psi)(\vec{x}), \quad \vec{x} =(x_1,\ldots,x_k) \in X^k
$$
(with $L_i$ acting in the $i$--th coordinate) that satisfy boundary conditions depending on $B$ acting on $\{\text{functions on }X^2\}$
$$
B_{i,i+1}\psi (\vec{x}) \vert_{x_i=x_{i+1}} = 0, \text{ for } 1\leq i,\leq k-1
$$
(with $B_{i,i+1}$ acting on the $i,i+1$ coordinate) can be found via:
\begin{enumerate}
\item Diagonalizaing the 1d operators $L\psi_z = \lambda_z\psi_z$ where $\psi_z:X\rightarrow \C$ and $z$ is a parameter (e.g. complex number)
\item Taking linear combinations
$$
\psi_{\vec{z}}(\vec{x}) = \sum_{\sigma \in S(k)} A_{\sigma}(\vec{z})\prod_{j=1}^k \psi_{z_{\sigma(j)}}(x_j)
$$
\item Evaluating $A_{\sigma}(\vec{z})$ as
$$
A_{\sigma}(\vec{z}) = \text{sgn}(\sigma) \prod_{a>b} \frac{S(z_{\sigma(a)},z_{\sigma(b)})}{S(z_a,z_b)}
$$
where
$$
S(z_1,z_2) = \frac{B(\psi_{z_1}\otimes \psi_{z_2})(x,x)}{\psi_{z_1}(x)\psi_{z_2}(x)}.
$$
\end{enumerate}
Then $\big(\mathcal{L} \psi_{\vec{z}}\big)\vec{x}) = \big(\sum_{i=1}^{k} \lambda_{z_i}\big) \psi_{\vec{z}}(\vec{x})$.
\end{proposition}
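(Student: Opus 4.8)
The plan is to verify the two properties the construction is meant to produce: first, that $\psi_{\vec{z}}$ is an eigenfunction of $\mathcal{L}$ with eigenvalue $\sum_{i=1}^{k}\lambda_{z_i}$ (the displayed conclusion), and second, that $\psi_{\vec{z}}$ satisfies the boundary conditions $B_{i,i+1}\psi_{\vec{z}}|_{x_i=x_{i+1}}=0$ for each $i$ (the substantive part, and what lets the preceding proposition turn $\psi_{\vec{z}}$ into an eigenfunction of the full operator). I would dispatch the eigenfunction statement first; it uses nothing about the coefficients $A_\sigma(\vec{z})$. Since $L_i$ acts only in the $i$-th coordinate, each product $\prod_{j=1}^{k}\psi_{z_{\sigma(j)}}(x_j)$ is an eigenfunction of $\mathcal{L}=\sum_{i=1}^{k}L_i$ with eigenvalue $\sum_{i=1}^{k}\lambda_{z_{\sigma(i)}}$, which equals $\sum_{i=1}^{k}\lambda_{z_i}$ because $\sigma$ only permutes the summands; hence any linear combination of these products, and in particular $\psi_{\vec{z}}$, is an $\mathcal{L}$-eigenfunction with eigenvalue $\sum_{i=1}^{k}\lambda_{z_i}$.

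The work is in the boundary conditions. I would fix $i\in\{1,\ldots,k-1\}$, let $\tau_i=(i\; i{+}1)$ be the adjacent transposition, and organize the $k!$ terms of the sum over $S(k)$ into orbits $\{\sigma,\sigma\tau_i\}$ under right multiplication by $\tau_i$. Because $B_{i,i+1}$ acts only in coordinates $i$ and $i{+}1$, applying it to the $\sigma$-term and then setting $x_i=x_{i+1}=x$ produces $B(\psi_{z_{\sigma(i)}}\otimes\psi_{z_{\sigma(i+1)}})(x,x)\prod_{j\neq i,i+1}\psi_{z_{\sigma(j)}}(x_j)$, which by the definition of $S$ equals $S(z_{\sigma(i)},z_{\sigma(i+1)})\,\psi_{z_{\sigma(i)}}(x)\psi_{z_{\sigma(i+1)}}(x)\prod_{j\neq i,i+1}\psi_{z_{\sigma(j)}}(x_j)$. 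The partner $\sigma'=\sigma\tau_i$ leaves $\sigma'(j)=\sigma(j)$ for $j\notin\{i,i{+}1\}$ and interchanges the two spectral parameters on slots $i,i{+}1$, so its contribution carries the identical product of $\psi$-factors (the factor $\psi_{z_{\sigma(i)}}(x)\psi_{z_{\sigma(i+1)}}(x)$ is symmetric under the swap) but with $S(z_{\sigma(i+1)},z_{\sigma(i)})$ and $A_{\sigma'}(\vec{z})$ replacing $S(z_{\sigma(i)},z_{\sigma(i+1)})$ and $A_\sigma(\vec{z})$. Hence the orbit contributes $0$ exactly when
\[
A_\sigma(\vec{z})\,S(z_{\sigma(i)},z_{\sigma(i+1)})+A_{\sigma'}(\vec{z})\,S(z_{\sigma(i+1)},z_{\sigma(i)})=0,
\]
and the whole claim reduces to this two-term identity.

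To prove that identity I would compute $A_{\sigma'}(\vec{z})/A_\sigma(\vec{z})$ directly from the formula for $A_\sigma$. Reindexing the product $\prod_{a>b}S(z_{\sigma(a)},z_{\sigma(b)})$ by the substitution $(a,b)\mapsto(\tau_i a,\tau_i b)$ shows that replacing $\sigma$ by $\sigma\tau_i$ leaves every factor unchanged except the single factor attached to the pair $\{i,i{+}1\}$, whose two arguments are interchanged; combined with $\text{sgn}(\sigma\tau_i)=-\text{sgn}(\sigma)$ and the cancellation of the $\sigma$-independent denominator $\prod_{a>b}S(z_a,z_b)$, this yields $A_{\sigma'}(\vec{z})=-A_\sigma(\vec{z})\,S(z_{\sigma(i)},z_{\sigma(i+1)})/S(z_{\sigma(i+1)},z_{\sigma(i)})$, which is exactly the cancellation demanded. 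Summing over orbits gives $B_{i,i+1}\psi_{\vec{z}}|_{x_i=x_{i+1}}=0$, and letting $i$ run over $\{1,\ldots,k-1\}$ finishes the argument; together with the preceding proposition this identifies $\psi_{\vec{z}}$ as an eigenfunction of $L^{\text{q--Boson}}$ as well.

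I expect the main obstacle to be the bookkeeping in the last step — confirming carefully that conjugating the cross-term product by $\tau_i$ perturbs only the one factor indexed by $\{i,i{+}1\}$, so that the pairing really does collapse to the claimed two-term relation — alongside the prerequisite point that $S(z_1,z_2)$ as defined above is genuinely independent of the evaluation point $x$, which is what makes the pairing manipulation legitimate. For the $q$-Boson operators this is transparent: with two-body operator $B_{i,i+1}=\nabla_i-q\nabla_{i+1}$ and one-body eigenfunctions $\psi_z(n)=(1-z)^{-n}$ satisfying $\nabla\psi_z=-z\psi_z$, one reads off $S(z_1,z_2)=qz_2-z_1$, which is $x$-free and reproduces the $\tfrac{z_A-z_B}{z_A-qz_B}$ cross-term structure from Section~\ref{L6}.
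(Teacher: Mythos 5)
Your proof is correct and follows essentially the same route as the paper: term-by-term verification of the eigenvalue equation, then pairing each $\sigma$ with its adjacent-transposition partner so that the boundary condition reduces to the two-term relation $A_{\sigma}S(z_{\sigma(i)},z_{\sigma(i+1)})+A_{\sigma\tau_i}S(z_{\sigma(i+1)},z_{\sigma(i)})=0$. You go one small step further than the paper by actually reindexing the product to check that the explicit formula for $A_\sigma$ satisfies this relation (the paper derives the recursion and merely asserts the formula matches), and your remark that $S$ must be independent of the evaluation point $x$ correctly identifies an implicit hypothesis of the proposition.
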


\begin{proof}
By Liebniz rule, it is clear that $\psi_{\vec{z}}(\vec{x})$ are eigenfunctions for the free generator. It remains to check that the choice of $A_{\sigma}$ implies that the boundary conditions are satisfied. Let $\tau_i = (i,i+1)$ act on permutations by permuting $i$ and $i+1$. It suffices to find $A_{\sigma}$ (or show that our specific choice of $A_{\sigma}$) that satisfy that for all $\sigma\in S(k)$, and all $1\leq i\leq k-1$, when $B_{i,i+1}$ is applied to
$$
T_{\sigma} + T_{\tau_i \sigma},\qquad \textrm{where} \quad T_{\sigma} =A_{\sigma}(\vec{z}) \prod_{j=1}^{k} \psi_{z_{\sigma(j)}}(x_j),
$$
the result is zero whenever $x_i=x_{i+1}$. Indeed, if one sums the above left-hand side over all $\sigma\in S(k)$, the result is $2 \big(B_{i,i+1} \psi_{\vec{z}}\big)(\vec{x})$ and if each summand can be made to equal zero (when $x_i=x_{i+1}$), so too with the entire sum by zero.

From definitions, one sees that
$$
B_{i,i+1}T_{\sigma}\big\vert_{x_i=x_{i+1}} = S(z_{\sigma(i)},z_{\sigma(i+1)}) T_{\sigma}.
$$
So
$$
B_{i,i+1}\big(T_{\sigma} +T_{\tau_i \sigma}\big)\big\vert_{x_i=x_{i+1}} = S(z_{\sigma(i)},z_{\sigma(i+1)})T_{\sigma} + S(z_{\sigma(i+1)},z_{\sigma(i)}) T_{\tau_i \sigma}.
$$
Unwinding this, we find that in order for this sum to be zero, we must have
$$
A_{\tau_i\sigma}(\vec{z}) = - \frac{S(z_{\sigma(i+1)},z_{\sigma(i)})}{S(z_{\sigma(i)},z_{\sigma(i+1)})} A_{\sigma}(\vec{z}).
$$
The transpositions $\tau_i$ as $1\leq i\leq k-1$ varies generates $S(k)$ and hence fixing $A_{I}(\vec{z})\equiv 1$, the formula uniquely characterizes the $A_{\sigma}(\vec{z})$. This expression matches that claimed by the proposition.
\end{proof}

Let us apply this method when $L= \nabla$ and $(B g)(x,y) =(\nabla_1-q\nabla 2) g(x,y)$. We can write 1d eigenfunctions of $L$ as
$\psi_z(n) = (1-z)^{-n}$ (here $n$ replaces $x$ from the proposition). This choice of $\psi_z$ has eigenvalue $(1-q)z$ and leads to $S(z_1,z_2) = -(z_1-qz_2)$. Thus, Bethe ansatz produces left eigenfunctions for all $z_1,\ldots, z_k\in \mathbb{C}\setminus \{1\}$
$$
\psi^{\ell}_{\vec{z}}(\vec{n}) := \sum_{\sigma\in S(k)} \prod_{1\leq B<A\leq k} \frac{z_{\sigma(A)}-q z_{\sigma(B)}}{z_{\sigma(A)}-z_{\sigma(B)}} \prod_{j=1}^{k} (1-z_{\sigma(j)})^{-n_j}
$$
such that when restricted to $\vec{n}= (n_1\geq n_2\geq \cdots \geq n_k)$,
$$(L^{\text{q--Boson}}\psi^{\ell}_{\vec{z}})(\vec{n}) = \sum_{j=1}^{k} (1-q)z_j \, \phi^{\ell}_{\vec{z}}(\vec{n}).$$

As a brief remark, notice that these are eigenfunctions for any choices of $z_j\in \mathbb{C}\setminus \{1\}$. If instead of working with the $q$-Boson process on $\mathbb{Z}$ we considered a periodic (or other boundary condition) portion of $\mathbb{Z}$, then in order to respect the boundary conditions of the finite lattice we would need to impose additional conditions on the $\vec{z}$. These restrictions are known as the Bethe ansatz equations and make things much more involved. It is only for solutions $\vec{z}$ of these algebraic equations that $\psi^{\ell}_{\vec{z}}(\vec{n})$ are eigenfunctions (on the finite lattice).

Having identified left eigenfunctions, we come now to the question of right eigenfunctions. Had our operator $L^{\text{q--Boson}}$ been self-adjoint, the left and right eigenfunctions would coincide and we could decompose and recompose functions with respect to this single set of functions. It is clear that $L^{\text{q--Boson}}$ is not self-adjoint, however it is not too far off, as it enjoys the property of being PT-invariant (in fact, this property is shared by all totally asymmetric zero range processes, provided their jump rate $g(k)$ is reasonable). PT-invariance means that the process generated by $L^{\text{q--Boson}}$ is invariant under joint space reflection and time inversion.

The product invariant measure $\mu$ for the $q$-Boson process in the $\vec{y}$ variables have one point marginal
$$\mu_{\alpha}(y_0) = \mathbf{1}_{y_0\geq 0} \frac{ \alpha^{y_0}}{g(1)g(2)\cdots g(y_0)},$$
where $\alpha>0$ controls the overall density of the invariant measure, and $g(k) = 1-q^k$ is the $q$-Boson jump rate. Time reversal corresponds with taking the adjoint of $L^{\text{q--Boson}}$ in $L^2(\vec{y},\mu_{\alpha})$ (the choice of $\alpha$ does not, in fact, matter here). Then PT-invariance amounts to the fact that one readily shows \cite{BCPS13} that
$$
L^{\text{q--Boson}} = P \big(L^{\text{q--Boson}}\big)^* P^{-1}
$$
where $\big(Pf\big)(\vec{y}) = f\big(\{y_{-i}\}_{i\in \Z}\big)$ is the space reflection operator (clearly $P=P^{-1}$).

PT-invariance can be written in terms of the $\vec{n}$ variables and using matrix notation as
$$
L^{\text{q--Boson}} = \big(PC_q\big) \big(L^{\text{q--Boson}}\big)^{{\rm transpose}} \big(PC_q\big)^{-1}
$$
where now $\big(Pf\big)(n_1,\ldots, n_k) = f(-n_k,\ldots, -n_1)$ and $C_q$ is the multiplication operator with
$$C_q(\vec{n}) =(-1)^k q^{-\frac{k(k-1)}{2}} \prod_{\text{cluster } i} \frac{(q;q)_{c_i}}{(1-q)^{c_i}}.$$
Note that we could have defined $C_q(\vec{n})$ as any function depending only on $k$, times the product over clusters of the $(q;q)_{c_i}$ terms. The choice above will be well suited to avoid messy constants in what follows.

Returning to the matter of right eigenfunctions, it is clear from PT-invariance that applying $\big(PC_q\big)^{-1}$ to left eigenfunction, produces right eigenfunctions. Thus, we define right eigenfunctions
$$
\psi^{r}_{\vec{z}}(\vec{n}) := \sum_{\sigma\in S(k)} \prod_{1\leq B<A\leq k} \frac{z_{\sigma(A)}-q^{-1} z_{\sigma(B)}}{z_{\sigma(A)}-z_{\sigma(B)}} \prod_{j=1}^{k} (1-z_{\sigma(j)})^{n_j}
$$
which satisfy
$$
\Big(\big(L^{\text{q--Boson}}\big)^{{\rm transpose}} \psi^{r}_{\vec{z}}\Big)(\vec{n}) = \sum_{j=1}^{k} (1-q)z_j \psi^{r}_{\vec{z}}(\vec{n}).
$$
Note that $\psi^{r}_{\vec{z}}(\vec{n}) = q^{-\frac{k(k-1)}{2}} \big(PC_q\big)^{-1} \psi^{\ell}_{\vec{z}}(\vec{n})$.

Having defined left and right eigenfunctions, it remains to demonstrate how to diagonalize $L^{\text{q--Boson}}$ with respect to them, with the ultimate goal of solving $\frac{d}{dt} h(t;\vec{n}) = L^{\text{q--Boson}} h(t;\vec{n})$ for arbitrary initial data. We proceed now by defining a direct and inverse Fourier type transform.

\begin{definition}
Fix the following spaces of functions:
\begin{align*}
\mathcal{W}^k &= \Big\{f:\big\{\vec{n} = (n_1\geq n_2\geq \cdots \geq n_k)\} \to \mathbb{C}\, \text{of compact support}\Big\},\\
\mathcal{C}^k &= \mathbb{C}\Big[(z_1-1)^{\pm1},\ldots,(z_k-1)^{\pm 1}\Big]^{\text{Sym}}.
\end{align*}
In words, $\mathcal{W}^k$ are all functions of $\vec{n}$ to $\mathbb{C}$ of compact support in $\vec{n}$ and $\mathcal{C}^k$ are all symmetric Laurant polynomials in the variables $(z_1-1)$ through $(z_k-1)$.
We may define bilinear pairings on these spaces so that for $f,g\in \mathcal{W}^k$ and $F,G\in \mathcal{C}^k$,
\begin{align*}
\big\langle f,g \big\rangle_{\mathcal{W}} &= \sum_{n_1\geq \cdots \geq n_k} f(\vec{n})g(\vec{n})\\
\big\langle F,G \big\rangle_{\mathcal{C}} &= \oint\cdots \oint d\mu_{(1)^k}(\vec{w}) \prod_{j=1}^{k} \frac{1}{1-w_j} F(\vec{w}) G(\vec{w}),
\end{align*}
where the integrals are over circles centered at the origin or radius exceeding one, and the notation $d\mu_{\lambda}(\vec{w})$ (here $(1)^k$ is the partition with $k$ ones) is recalled from (\ref{dmu}).

Define the direct transform $\mathcal{F}: \mathcal{W}^k \to \mathcal{C}^k$ and (candidate) inverse transform $\mathcal{J}: \mathcal{C}^k \to \mathcal{W}^k$ as
\begin{align*}
\big(\mathcal{F} f\big) (\vec{n}) &=\big\langle f,\psi^{r}_{\vec{z}} \big\rangle_{\mathcal{W}}\\
\big(\mathcal{J} G\big) (\vec{n}) &=\big\langle \psi^{\ell}(\vec{n}),G \big\rangle_{\mathcal{W}}.
\end{align*}
In the second line we have used $\psi^{\ell}(\vec{n})$ to represent the function which maps $\vec{z}$ to $\psi^{\ell}_{\vec{z}}(\vec{n})$.
\end{definition}

The operator $\mathcal{J}$ can be written in two alternative ways.  The first is given by
$$
\big(\mathcal{J} G\big) (\vec{n}) = \frac{1}{(2\pi i)^k}
\oint\cdots\oint \prod_{1\leq A<B\leq k} \frac{z_A-z_B}{z_A-qz_B} \prod_{j=1}^k \frac{1}{(1-z_j)^{n_j+1}} dz_j
$$
with contours such that the $z_A$ contour contains $\{qz_B\}_{B>A}$ and $1$. This equivalence follows by the fact that one can deform such contours to all lie on a large circle centered at zero of radius exceeding one. Then, since all contours lie upon the same circle, one can symmetrize the integrand and after an application of the Cauchy determinant formula, we recover the initial expression for $\mathcal{J}$. The second expression comes from unnesting the contours, but onto a single small contour around one. This is accomplished by applying Theorem \ref{unnestthm}, and hence we find that
$$
\big(\mathcal{J} G\big) (\vec{n}) = \sum_{\lambda\vdash k} \frac{1}{m_1!m_2!\cdots} \oint\cdots\oint d\mu_{\lambda}(\vec{w}) \prod_{j=1}^{\ell(\lambda)} \frac{1}{(w_j;q)_{\lambda_{j}}} \, \psi^{\ell}_{\vec{w}\circ \lambda}(\vec{n}) G(\vec{w}\circ \lambda),
$$
where the $\ell(\lambda)$ integrals occur upon a single contour which contains 1 and has small enough radius so the image under multiplication by $q$ is outside the contour.

\begin{theorem}[\cite{BCPS13}]
On the spaces $\mathcal{W}^k$ and $\mathcal{C}^{k}$ the operators $\mathcal{F}$ and $\mathcal{J}$ are mutual inverses (i.e. $\mathcal{J}\mathcal{F}$ is the identity on $\mathcal{W}^k$ and $\mathcal{F}\mathcal{J}$ is the identity in $\mathcal{C}^k$). Consequently, the left and right eigenfunctions are biorthogonal so that
$$
\big\langle \psi^{\ell}_{\bullet}(\vec{m}), \psi^{r}_{\bullet}(\vec{n})\big\rangle_{\mathcal{C}} = \mathbf{1}_{\vec{m}=\vec{n}}
$$
and
$$
\big\langle \psi^{\ell}_{\vec{z}}(\bullet), \psi^{r}_{\vec{w}}(\bullet)\big\rangle_{\mathcal{W}} = \frac{1}{k!}\prod_{1\leq A\neq B \leq k} \frac{z_A-qz_B}{z_A-z_B} \prod_{j=1}^{k} \frac{1}{1-z_j} \, \det\big[\delta(z_i-w_j)\big]_{i,j=1}^{k}
$$
where this last equality is to be understood in a weak (or integrated) sense.
\end{theorem}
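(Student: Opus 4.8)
The plan is to derive both ``mutual inverse'' assertions from the two explicit biorthogonality relations stated at the end of the theorem, and to prove those relations by residue calculus, bootstrapping from the $k=1$ case with the aid of the unnesting Theorem~\ref{unnestthm}. First, observe that once the \emph{spectral} biorthogonality $\langle \psi^{\ell}_{\bullet}(\vec{m}),\psi^{r}_{\bullet}(\vec{n})\rangle_{\mathcal{C}}=\mathbf{1}_{\vec{m}=\vec{n}}$ is known, $\mathcal{J}\mathcal{F}=\mathrm{Id}$ on $\mathcal{W}^k$ is immediate: for $f\in\mathcal{W}^k$ one has $\mathcal{F}f=\sum_{\vec m}f(\vec m)\,\psi^{r}_{\bullet}(\vec m)$, a finite sum since $f$ has compact support, hence $\mathcal{J}\mathcal{F}f(\vec n)=\langle\psi^{\ell}(\vec n),\mathcal{F}f\rangle_{\mathcal{C}}=\sum_{\vec m}f(\vec m)\,\langle\psi^{\ell}_{\bullet}(\vec n),\psi^{r}_{\bullet}(\vec m)\rangle_{\mathcal{C}}=f(\vec n)$. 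Dually, once the \emph{spatial} biorthogonality (the formula with the $\det[\delta(z_i-w_j)]$ kernel) is known, $\mathcal{F}\mathcal{J}=\mathrm{Id}$ on $\mathcal{C}^k$ follows by the same interchange of the $\mathcal{W}$- and $\mathcal{C}$-pairings, now collapsing the Dirac deltas against the symmetric Laurent polynomial $G$; the constants match because the $k!$ localizations of $\det[\delta(z_i-w_j)]$ contribute a symmetrization over $S_k$ that cancels the $\tfrac1{k!}$ and, together with the prefactor $\prod_{A\neq B}\tfrac{z_A-qz_B}{z_A-z_B}\prod_j\tfrac1{1-z_j}$, reconstitutes the measure $d\mu_{(1)^k}$. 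One preliminary point belongs here: that $\mathcal{J}$ really does land in $\mathcal{W}^k$, i.e. $\mathcal{J}G$ is compactly supported, which one reads off from the order of the pole at $z_j=1$ in the nested-contour expression for $\mathcal{J}$.

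The base case $k=1$ is pure Cauchy theorem. There $\psi^{r}_{z}(n)=(1-z)^{n}$, $\psi^{\ell}_{z}(n)=(1-z)^{-n}$, and $d\mu_{(1)}(w)=\tfrac{dw}{2\pi i}$, so $\mathcal{F}f(z)=\sum_n f(n)(1-z)^{n}$ is a Laurent polynomial in $(1-z)$, while $\mathcal{J}G(n)=\oint\tfrac{dw}{2\pi i}\,G(w)(1-w)^{-n-1}$ just extracts the coefficient of $(1-w)^{n}$; both compositions are visibly the identity. The spectral biorthogonality reduces to $\oint\tfrac{dw}{2\pi i}(1-w)^{m-n-1}=\mathbf{1}_{m=n}$, and the spatial biorthogonality is the elementary fact that the Dirac comb $\sum_{n\in\mathbb{Z}}\big((1-w)/(1-z)\big)^{n}$ equals $\tfrac{1}{1-z}\,\delta(z-w)$ in the weak sense.

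For general $k$ the real content is the spectral biorthogonality. I would substitute the symmetrized forms of $\psi^{\ell}_{\vec z}(\vec m)$ and $\psi^{r}_{\vec z}(\vec n)$ into $\langle\cdot,\cdot\rangle_{\mathcal{C}}$ and use the symmetry of each eigenfunction in $\vec z$, together with the symmetry of $d\mu_{(1)^k}$, to collapse the double $S_k$-sum to a single $S_k$-sum; this leaves a nested contour integral of $\prod_{A<B}\tfrac{z_A-z_B}{z_A-qz_B}$ times a product of $\psi^{r}$-type cross-terms times $\prod_j(1-z_j)^{m_j-n_j-1}$. Applying Theorem~\ref{unnestthm} collapses all contours onto one small circle around $1$ and expands the result as a sum over partitions $\lambda\vdash k$ of residue terms evaluated at the geometric strings $\vec w\circ\lambda$. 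The assertion is that every such term vanishes except the one that forces $\vec m=\vec n$; the cleanest route I see is induction on $k$: integrate out the innermost variable, pick up the residue at $z=1$, observe that it reduces the Cauchy-type product $\prod_{A<B}\tfrac{z_A-z_B}{z_A-qz_B}$ to the $(k-1)$-variable one, and match to the inductive hypothesis, the key point being that the string specializations $\vec w\circ\lambda$ with $\ell(\lambda)<k$ are annihilated by zeros of $\prod_{A<B}(z_A-z_B)$. I expect this residue bookkeeping --- tracking which partitions survive and verifying that the leftover constant is exactly $1$ --- to be the main obstacle; this is precisely where the normalizations chosen in $d\mu_{\lambda}$ and in $\psi^{\ell},\psi^{r}$ are engineered to cooperate.

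Finally, the spatial biorthogonality is proved by a parallel but genuinely distributional computation: expand both eigenfunctions and sum $\prod_{j}\big((1-w_{\tau(j)})/(1-z_{\sigma(j)})\big)^{n_j}$ over the chamber $n_1\ge\cdots\ge n_k$ by telescoping truncated geometric sums over $n_1\ge\cdots\ge n_k>-M$. This produces a rational closed form plus a boundary term; summed over the two $S_k$-sums, the rational parts cancel by a Bethe-type symmetrization identity, while the $M\to\infty$ limit of the boundary term produces the comb $\det[\delta(z_i-w_j)]$ with the stated prefactor $\tfrac1{k!}\prod_{A\ne B}\tfrac{z_A-qz_B}{z_A-z_B}\prod_j\tfrac1{1-z_j}$. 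With both biorthogonality relations in hand, the two displayed ``consequently'' statements of the theorem follow from the reductions in the first paragraph.
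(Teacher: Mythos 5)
Your reductions in the first paragraph are fine: $\mathcal{J}\mathcal{F}=\mathrm{Id}$ is indeed equivalent to the spectral biorthogonality (the sum defining $\mathcal{F}f$ is finite for $f\in\mathcal{W}^k$, so the interchange is harmless), and the $k=1$ case is correctly disposed of by Cauchy's theorem. The problem is that the heart of the matter --- the two biorthogonality relations for general $k$ --- is not actually established, and the one concrete mechanism you propose for the spectral relation is wrong as stated. You claim that after applying Theorem~\ref{unnestthm} the terms indexed by partitions with $\ell(\lambda)<k$ are ``annihilated by zeros of $\prod_{A<B}(z_A-z_B)$'' at the string specialization $\vec{w}\circ\lambda$. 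But the string points $w,qw,\ldots,q^{\lambda_1-1}w$ are pairwise distinct (for $q\in(0,1)$, $w\neq 0$), so the Vandermonde does not vanish there; what vanishes are some of the numerators $z_{\sigma(A)}-qz_{\sigma(B)}$ inside $E^q$, and only for \emph{some} permutations $\sigma$. Indeed, the moment formulas in Section~\ref{L8} show that for a generic symmetric $F$ every partition $\lambda\vdash k$ contributes nontrivially to the unnested expansion. So whatever forces the off-diagonal terms (and the $\ell(\lambda)<k$ terms) to cancel must come from the specific structure of $\psi^{r}_{\vec z}(\vec n)$, and your induction does not get off the ground without that argument. Similarly, your treatment of the spatial biorthogonality defers the entire content to an unstated ``Bethe-type symmetrization identity''; that identity \emph{is} the theorem, so this is not a proof sketch but a restatement.

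For comparison, the paper proves (a sketch of) only the $\mathcal{J}\mathcal{F}=\mathrm{Id}$ half, and by a quite different and more economical device: it never unnests. Writing $\mathcal{K}=\mathcal{J}\mathcal{F}$ and testing against $f=\mathbf{1}_{\vec n=\vec x}$, $g=\mathbf{1}_{\vec n=\vec y}$, one keeps the nested contours and expands the $z_1$ contour to infinity; counting the top power of $(1-z_1)$ in the integrand (which is $x_1-y_1-1$) shows the integral vanishes unless $x_1\geq y_1$. The reverse inequality $y_1\geq x_1$ is then obtained for free from PT-invariance of $L^{\text{q--Boson}}$, i.e.\ the adjointness relation $\langle \mathcal{K}f,g\rangle_{\mathcal{W}}=\langle f,(PC_q)^{-1}\mathcal{K}(PC_q)g\rangle_{\mathcal{W}}$, which swaps the roles of $\vec x$ and $\vec y$. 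One then peels off coordinates one at a time by evaluating the residue at infinity. If you want to complete your route instead, the missing ingredient is a genuine analysis of which permutations survive the string specializations (this is carried out in \cite{BCPS13}, and it is substantially more involved than a one-line Vandermonde argument); alternatively, adopt the paper's PT-invariance trick, which converts the hard half of each vanishing claim into the easy half.
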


At the end of this section we will prove part of this theorem, that $\mathcal{J}\mathcal{F}$ is the identity. A generalization of the above theorem (which in fact admits easier proofs) is provided in \cite{BCPS14}. But first, let us apply this theorem to solve the $q$-Boson backward equation.

\begin{corollary}
For $h_0\in \mathcal{W}^k$, the solution to
$$
\frac{d}{dt} h(t,\vec{n}) = \big(L^{\text{q--Boson}} h\big)(t,\vec{n})
$$
with initial data $h_0$ is given by
\begin{align*}
h(t,\vec{n}) &= \mathcal{J}\big(e^{t(q-1)(z_1+\cdots+z_k)} \mathcal{F} h_0\big)(t,\vec{n})\\
 &= \frac{1}{(2\pi i)^k} \oint \cdots \oint \prod_{1\leq A<B\leq k} \frac{z_A-z_B}{z_A-qz_B} \prod_{j=1}^{k} e^{t(q-1)z_j}{(1-z_j)^{n_j+1}} \big(\mathcal{F} h_0\big)(\vec{z}) d\vec{z},
\end{align*}
with nested contours.
\end{corollary}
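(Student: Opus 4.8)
The plan is to verify directly that the right-hand side, call it
$h(t,\vec{n}) := \mathcal{J}\big(e^{t(q-1)(z_1+\cdots+z_k)}\,\mathcal{F}h_0\big)(\vec{n})$,
satisfies both the $q$--Boson backward equation and the initial condition, and then invoke uniqueness. The second displayed formula then needs no separate argument: it is obtained simply by inserting $G = e^{t(q-1)(z_1+\cdots+z_k)}\,\mathcal{F}h_0$ into the nested-contour expression for $\mathcal{J}G$ recorded just before the corollary and distributing the exponential as $\prod_j e^{t(q-1)z_j}$. One small preliminary: since $e^{t(q-1)\sum z_j}\mathcal{F}h_0$ is no longer a symmetric Laurent polynomial, I first note that the defining formula $\mathcal{J}G(\vec{n}) = \langle \psi^{\ell}(\vec{n}), G\rangle_{\mathcal{C}}$ extends verbatim to integrands analytic in a neighborhood of the contours, which covers the integrand at hand.

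The central observation is that $\mathcal{J}$ intertwines multiplication (in $\vec{z}$) by $(q-1)(z_1+\cdots+z_k)$ with the action of $L^{\text{q--Boson}}$ (in $\vec{n}$). Indeed, $L^{\text{q--Boson}}$ is a finite-difference operator in the $\vec{n}$ variables, hence commutes with the contour integral defining $\langle\cdot,\cdot\rangle_{\mathcal{C}}$; applying it under the integral and using that, for each fixed $\vec{z}$, the function $\psi^{\ell}_{\vec{z}}(\cdot)$ restricted to $\vec{n}\in W_{\geq 0}^k$ is an eigenfunction of $L^{\text{q--Boson}}$ with eigenvalue $(q-1)\sum_j z_j$ (this is exactly the eigenfunction statement of Section~\ref{secspec}), we obtain $L^{\text{q--Boson}}\mathcal{J}G = \mathcal{J}\big((q-1)(z_1+\cdots+z_k)\,G\big)$. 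The contours being compact circles and the integrand being entire in $t$, we may differentiate under the integral, giving
$\frac{d}{dt}h(t,\vec{n}) = \mathcal{J}\big((q-1)(z_1+\cdots+z_k)\,e^{t(q-1)\sum z_j}\mathcal{F}h_0\big)(\vec{n}) = L^{\text{q--Boson}}h(t,\vec{n})$,
which is the evolution equation. At $t=0$ the exponential factor is identically $1$, so $h(0,\vec{n}) = \mathcal{J}\mathcal{F}h_0(\vec{n}) = h_0(\vec{n})$ by the preceding theorem (the half asserting $\mathcal{J}\mathcal{F}=\mathrm{Id}$ on $\mathcal{W}^k$).

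For uniqueness I argue as in the proof of Proposition~\ref{partA}: $L^{\text{q--Boson}}$ preserves the particle number $k$ and restricts on $W_{\geq 0}^k$ to a triangular (with respect to, say, $n_1+\cdots+n_k$) closed system of linear ODEs, since $h_0$ has compact support; standard ODE uniqueness then forces $h$ to coincide with the $q$--Boson flow of $h_0$. The only genuine obstacle here is that the heavy lifting — that $\mathcal{F}$ and $\mathcal{J}$ are mutual inverses — is borrowed from the biorthogonality theorem (whose $\mathcal{J}\mathcal{F}$ half is established later); granted that, all that remains is the routine justification of differentiating under the integral, of exchanging the finite-difference operator with the contour integral, and of checking that the eigenfunction identity for $\psi^{\ell}_{\vec{z}}$ is valid on all of $W_{\geq 0}^k$ including its boundary faces $n_i=n_{i+1}$ — which is precisely where the Bethe-ansatz coefficients were designed to make it hold.
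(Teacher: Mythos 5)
Your proof is correct and rests on exactly the same two ingredients as the paper's argument --- the identity $\mathcal{J}\mathcal{F}=\mathrm{Id}$ on $\mathcal{W}^k$ and the left-eigenfunction relation for $\psi^{\ell}_{\vec{z}}$ --- the only difference being direction: the paper writes $h(t,\cdot)=e^{tL^{\text{q--Boson}}}\mathcal{J}\mathcal{F}h_0$ and lets the semigroup act diagonally under the contour integral, whereas you verify that the candidate formula solves the evolution equation and then invoke ODE uniqueness. This is a presentational rather than a substantive difference, and your remark that $e^{t(q-1)(z_1+\cdots+z_k)}\mathcal{F}h_0$ lies outside $\mathcal{C}^k$ so that $\mathcal{J}$ must be extended to analytic integrands is a legitimate point the paper passes over silently.
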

\begin{proof}
From the theorem, $\mathcal{J}\mathcal{F}$ is the identity on $\mathcal{W}^k$. Hence,
$$
h(t,\vec{n}) = \big(e^{t L^{\text{q--Boson}}} h_0\big)(\vec{n}) = \big( e^{t L^{\text{q--Boson}}} \mathcal{J}\mathcal{F} h_0\big)(\vec{n})
$$
Using the explicit form of the transforms, we find that
$$
\big( e^{t L^{\text{q--Boson}}} \mathcal{J}\mathcal{F} h_0\big)(\vec{n}) =  e^{t L^{\text{q--Boson}}} \oint \cdots\oint d\mu_{(1)^k}(\vec{w}) \prod_{j=1}^k \frac{1}{1-w_j} \psi^{\ell}_{\vec{w}}(\vec{n}) \big(\mathcal{F} h_0\big)(\vec{z}).
$$
We now use the fact that $\psi^{\ell}_{\vec{w}}$ are the left eigenfunctions for $L^{\text{q--Boson}}$ and thus $e^{t L^{\text{q--Boson}}}$ acts diagonally with eigenvalue $e^{t(q-1)(w_1+\cdots w_k)}$. Plugging this in (and going to the nested contours as explained earlier) yields the formula in the corollary.
\end{proof}

There are two apparent limitations of this corollary. The first is that it only applies for $h_0\in\mathcal{W}^k$. In Section \ref{L6} we were concerns with initial data $h_0(\vec{n}) = \prod_{j=1}^{k} \mathbf{1}_{n_j\geq 1}$, which does not have compact support. For this initial data it is possible to extend the validity of the corollary with a little work (likely this can be done for a much wider class as well). The second limitation is more serious -- the expression $\big(\mathcal{F}h_0\big)(\vec{z})$ involves an infinite summation of initial data against eigenfunctions. However, such an infinite summation is not so useful for asymptotics. We would like to be able to evaluate such summations. Unfortunately, this may be quite hard. However, such a summation is automatic is $h_0= \mathcal{J} G_0$ for some $G_0\in \mathcal{C}^k$. In that case, $\mathcal{F} h_0 = \mathcal{F}\mathcal{J} G_0 = G_0$ by the theorem.

For initial data $h_0(\vec{n}) = \prod_{j=1}^{k} \mathbf{1}_{n_j\geq 1}$ it can be shown (via residues and extending the fact that $\mathcal{F}\mathcal{J}$ is the identity to some functions outside of $\mathcal{C}^k$) that $h_0(\vec{n}) = \big(\mathcal{J} G_0\big)(\vec{n})$ where $G_0(\vec{z}) = q^{\frac{k(k-1)}{2}} \prod_{j=1}^k \frac{z_j-1}{z_j}$. Using this we can recover the solution given in Theorem \ref{thmgenmom} and its corollary. This approach can be applied to some broader classes of initial data (cf. \cite{BCS}), though we will not pursue these here.

Let us close this section by proving that
$$
\mathcal{K} := \mathcal{J}\mathcal{F} = {\rm Identity}
$$
on $\mathcal{W}^k$.

\begin{proof}[Sketch of $\mathcal{J}\mathcal{F} = {\rm Identity}$]
PT-invariance implies that for $f,g\in \mathcal{W}^k$,
$$
\big\langle \mathcal{K} f,g \big\rangle_{\mathcal{W}} = \big\langle f, (PC_q)^{-1} \mathcal{K} (PC_q)g\big\rangle_{\mathcal{W}}.
$$
This can be shown by expanding $\mathcal{K}$ into eigenfunctions and using the relation between left and right eigenfunctions implied by PT-invariance.

In order to prove that $\mathcal{K}$ acts as the identity, it suffices (by linearity) to show that for $f(\vec{n}) = \mathbf{1}_{\vec{n}=\vec{x}}$ for some $\vec{x}$ fixed, $\big(\mathcal{K} f\big)(\vec{y}) = \mathbf{1}_{\vec{y}=\vec{x}}$. Showing that $\big(\mathcal{K} f\big)(\vec{x})=1$ involves residue calculus, and we will skip it. We will, however, show that for $\vec{y}\neq \vec{x}$, $\big(\mathcal{K} f\big)(\vec{y})=0$.

Set $g(\vec{n}) = \mathbf{1}_{\vec{n}=\vec{y}}$, then
$$
\big(\mathcal{K} f\big)(\vec{y}) = \big\langle \mathcal{K} f, g\big\rangle_{\mathcal{W}} = \frac{1}{(2\pi i)^k} \oint\cdots \oint \prod_{1\leq A<B\leq k} \frac{z_A-z_B}{z_A-qz_B} \prod_{j=1}^{k} (1-z_j)^{-y_j-1} \psi^{r}_{\vec{z}}(\vec{x}),
$$
with integration along nested contours. We wish to prove that this is zero. Consider expanding the $z_1$ contour to infinity. This can be done without crossing and poles, so evaluating that integral amounts to evaluating the residue of the integrand at infinity. By expanding the right eigenfunction via its definition, we see that the largest exponent of $(1-z_1)$ is $x_1-y_1-1$. Thus, in order that there be a residue at infinity, we should have $x_1-y_1-1\geq -1$, or in other words $x_1\geq y_1$. The implication is that if $x_1<y_1$, then $\big(\mathcal{K} f\big)(\vec{y})=0$.

But, using the PT-invariance relation
$$
\big(\mathcal{K} f\big)(\vec{y}) = \big\langle \mathcal{K} f, g\big\rangle_{\mathcal{W}} = \big\langle f, (PC_q)^{-1} \mathcal{K} (PC_q) g\big\rangle_{\mathcal{W}}.
$$
This switches the role of $x$'s and $y$'s and the same reasoning as above shows that this is zero if $y_1<x_1$. The overall implication is that for
$\big(\mathcal{K} f\big)(\vec{y})$ to be nonzero, we must have $x_1=y_1$.

In light of this deduction, assume that $x_1=y_1$. It is now quite simply to evaluate the residue as $z_1$ goes to infinity. This yields a similar expression except without the $z_1$ integration valuable. Thus, in the same manner we establish that in order that $\big(\mathcal{K} f\big)(\vec{y})$ be nonzero, $x_2=y_2$ and so on until we find that all $x_j=y_j$.
\end{proof}

\appendix

\section{White noise and stochastic integration}\label{secwhitenoise}
In one sentence, white noise $\xi(t,x),t\geq 0,x\in \R$ is the distribution valued Gaussian process with mean zero and covariance
$$
\E[\xi(t,x)\xi(s,y)] = \delta(x-y)\delta(s-t).
$$
Let us be more precise about what this means. Recall that if $g$ is a distribution and $f$ is a smooth test function, then $\int fg$ is a real number. For example, $\int f(x)\delta(x-y)dx=f(y)$. Therefore, for a smooth function $f(t,x)$ of compact support, the integral
$$
\int_{\R_+\times \R} f(t,x)\xi(t,x)dxdt
$$
is a random variable. These random variables are jointly Gaussian with mean zero and covariance
\begin{align*}
&\E\left[ \int_{\R_+\times \R} f_1(t,x)\xi(t,x)dxdt \int_{\R_+\times \R} f_2(s,y)\xi(s,y)dyds \right] \\
&= \int_{\R_+\times \R} dxdt \int_{\R_+\times \R} \E\left[ \xi(s,y)\xi(t,x)\right]f_1(t,x) f_2(s,y)dyds \\
&= \int_{\R_+\times \R} f_1(t,x)f_2(t,x)dxdt.
\end{align*}
There are many ways to construct $\xi(t,x)$. For instance, since the covariance is positive definite and symmetric, general theory of Gaussian processes implies the existence and uniqueness of this process. More concretely, one can choose any orthonormal basis $\{f_n\}$ of $L^2(\R_+\times \R)$ and independent Gaussian random variables $\{Z_n\}$ with mean zero and variance one, and define
$$
\xi(t,x) = \sum_{n=1}^{\infty} Z_n f_n(t,x).
$$
Within a suitable negative Sobolev space (namely $H_{-1-\delta,\text{loc}}(\R_+\times \R)$ for any $\delta>0$) this construction will yield a unique (in law) element whose covariance can be checked to be as desired. Details on constructing $\xi$ can be found, for instance, in \cite{J97,PT10}.

%

Assume that $\xi(t,x)$ has been constructed on some probability space $(\Omega,\mathcal{F},\mathbb{P})$.  Now let us construct stochastic integrals with respect to white noise. For non--random functions $f(t,x)$ this is not hard. If $f\in L^2(\mathbb{R}_+ \times \mathbb{R})$ then there are smooth functions $f_n$ with compact support in $\R_+\times \R$ such that
$$
\int_{\RR} \left| f_n(t,x) - f(t,x) \right|^2 dxdt \rightarrow 0.
$$
Since
$$
\E\left[ \left( \int_{\RR} (f_n(t,x) - f_m(t,x))\xi(t,x)dxdt \right)^2 \right] = \int_{\RR} \left| f_n(t,x) - f_m(t,x) \right|^2 dxdt,
$$
this means that $\int_{\RR} f_n(t,x)\xi(t,x)dxdt$ is a Cauchy sequence in $L^2(\Omega,\mathcal{F},\mathbb{P})$. Since $L^2(\Omega,\mathcal{F},\mathbb{P})$ is complete, then this Cauchy sequence has a limit, which we define to be $\int_{\RR} f(t,x)\xi(t,x)dxdt$.

For random functions, the definition of the integral is a little more complicated. As in the one dimensional case, one has to make a choice of the integral (It\^{o} vs. Stratonovich) . The construction here is essentially the standard It\^{o} integral, but only in the time variable.

Start with smooth functions $\varphi(x)$ on $\R$ with compact support. For $t>0$ we can define
$$
\int_{\RR} \mathbf{1}_{(0,t]}(s)\varphi(x)\xi(s,x)dsdx.
$$
This is a Brownian motion in $t$ with variance $\int \varphi^2(x)dx$, since it is Gaussian with mean zero and covariance
\begin{align*}
&\E\left[ \int_{\RR} \mathbf{1}_{(0,t_1]}(s)\varphi(x)\xi(x,s)dxds \int_{\RR} \mathbf{1}_{(0,t_2]}(s)\varphi(x)\xi(x,s)dxds \right] \\
&= \int_{0}^{\min(t_1,t_2)} ds \int_{\R} \varphi^2(x)dx\\
&= \min(t_1,t_2) \int \varphi^2(x)dx.
\end{align*}
(Note: one often hears the statement ``white noise is the derivative of Brownian motion'').

Let $\mathcal{F}_0=\emptyset$ and for each $t>0$ define $\mathcal{F}_t$ to be the $\sigma$--field generated by
\begin{align*}
&\Bigl\{ \int_{\RR} \mathbf{1}_{(0,s]}(u) \varphi(x) \xi(u,x) dxdu: 0\leq s\leq t,\\
&\qquad\qquad\qquad \varphi \text{ a smooth function of compact support in } \R \Bigr\}.
\end{align*}
It is clear that $\mathcal{F}_t$ is a filtration of $\mathcal{F}$, that is $\mathcal{F}_s\subseteq \mathcal{F}_t$ for $s\leq t$.

Now consider slightly more complicated functions. Let $\mathcal{S}$ be the set of functions of the form
$$
f(t,x,\omega) = \sum_{i=1}^n X_i(\omega) \mathbf{1}_{(a_i,b_i]}(t)\varphi_i(x),
$$
where $X_i$ is a bounded $\mathcal{F}_{a_i}$--measurable random variable and $\varphi_i$ are smooth functions of compact support on $\R$. For functions of this form, define the stochastic integral as
$$
\int_{\RR} f(t,x)\xi(t,x)dxdt = \sum_{i=1}^n X_i \int_{\RR} \mathbf{1}_{(a_i,b_i]}(t) \varphi_i(x) \xi(t,x)dxdt.
$$
It is easy to check that the integral is linear and an isometry from $L^2(\RR\times \Omega,\mathcal{F},\mathbb{P})$ to $L^2(\Omega,\mathcal{F},\mathbb{P})$, that is
$$
\int_{\RR} \mathbb{E}[f^2(t,x)]dxdt = \mathbb{E} \left[ \left( \int_{\RR} f(t,x)\xi(t,x)dxdt \right)^2 \right].
$$

Let $\mathcal{P}$ be the sub--$\sigma$--field of $\mathcal{B}(\RR)\times \mathcal{F}$ generated by $\mathcal{S}$. Let $L^2(\RR\times\Omega,\mathcal{F},\mathbb{P})$ be the space of square integrable $\mathbb{P}$--measurable random variables $f(t,x,\omega)$. These will be the integrators. It is important to note that these are non--anticipating in the sense that $f(t,x,\omega)$ only depends on the information $\mathcal{F}_t$ up to time $t$. This is analogous to the distinction between It\^{o} and Stratonovich integrals in the one--dimensional case. The construction of the stochastic integral will be defined through the isometry and approximation.

\begin{lemma}
$\mathcal{S}$ is dense in $L^2(\RR\times \Omega, \mathcal{F}, \mathbb{P}).$
\end{lemma}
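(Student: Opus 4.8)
The plan is to start from an arbitrary predictable $f$ in $L^2(\RR\times\Omega,\mathcal{F},\mathbb{P})$ (i.e. a square-integrable $\mathcal{P}$-measurable integrand) and realize it as an $L^2$-limit of elements of $\mathcal{S}$ through a chain of reductions, checking at every stage that non-anticipation is preserved; note first that $\mathcal{S}\subseteq L^2(\RR\times\Omega)$ since each $X_i$ is bounded and each $\mathbf{1}_{(a_i,b_i]}\varphi_i$ lies in $L^2(\RR)$. \textbf{Reduction to bounded $f$:} replacing $f$ by $f\mathbf{1}_{\{|f|\le M\}}$ and letting $M\to\infty$, dominated convergence lets us assume $f$ is bounded, and truncation preserves predictability. \textbf{Mollification in time:} set $f_h(t,x,\omega)=\tfrac1h\int_{\max(t-h,0)}^{t}f(s,x,\omega)\,ds$; this average over the past keeps $f_h$ adapted, makes it continuous in $t$, and $f_h\to f$ in $L^2(\RR\times\Omega)$ as $h\searrow 0$ by continuity of translation in $L^2(\R_+)$, so we may assume $f$ is bounded, predictable and continuous in $t$. \textbf{Riemann sums in time:} for a mesh $0=t_0^n<t_1^n<\cdots$, put $f^{(n)}(t,x,\omega)=\sum_k f(t_k^n,x,\omega)\mathbf{1}_{(t_k^n,t_{k+1}^n]}(t)$; continuity in $t$ together with boundedness give $f^{(n)}\to f$ in $L^2$, and crucially $f(t_k^n,\cdot,\cdot)$ is $\mathcal{F}_{t_k^n}$-measurable, exactly the measurability that the definition of $\mathcal{S}$ demands of the coefficient attached to the interval $(t_k^n,t_{k+1}^n]$.

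\textbf{Separating space and randomness:} it then remains to approximate a fixed $\mathcal{F}_{t_k^n}$-measurable $g\in L^2(\R\times\Omega)$ by finite sums $\sum_i X_i(\omega)\varphi_i(x)$ with each $X_i$ bounded and $\mathcal{F}_{t_k^n}$-measurable and each $\varphi_i\in C_c^\infty(\R)$; this is the standard Hilbert-space fact that $L^2(\R)\otimes L^2(\Omega,\mathcal{F}_{t_k^n})$ is dense in $L^2(\R\times\Omega,\mathcal{B}(\R)\otimes\mathcal{F}_{t_k^n})$, combined with density of $C_c^\infty(\R)$ in $L^2(\R)$ and of bounded random variables in $L^2(\Omega)$. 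A diagonal argument over the four reductions finishes the proof.

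\textbf{Main obstacle.} There is essentially no analytic difficulty here; the whole point is the filtration bookkeeping --- each approximant must be non-anticipating and, when written in the form $\sum_i X_i\mathbf{1}_{(a_i,b_i]}(t)\varphi_i(x)$, must have $X_i$ genuinely $\mathcal{F}_{a_i}$-measurable. The device that makes this run is the one-sided time-averaging in the mollification step, which manufactures the time-continuity needed for the Riemann-sum discretization while only ever using information up to the current time, so adaptedness is never lost. A cleaner-looking alternative is a functional monotone class argument: let $\mathcal{H}$ be the bounded functions in the $L^2$-closure of $\mathcal{S}$, verify that $\mathcal{H}$ is a vector space closed under bounded monotone limits and that $\mathcal{S}$ generates $\mathcal{P}$, conclude that $\mathcal{H}$ contains every bounded $\mathcal{P}$-measurable function, and extend to $L^2$ by truncation; but the same filtration bookkeeping simply reappears inside the verification that $\mathcal{S}$ generates $\mathcal{P}$.
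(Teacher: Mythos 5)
Your proof is correct, and it is exactly the argument the paper has in mind: the paper's entire proof of this lemma reads ``Same as one--dimensional case,'' and your chain of reductions (truncation, one-sided adapted mollification, Riemann sums in time, tensor-product separation of space and randomness) is the standard one-dimensional argument carried over verbatim, with the filtration bookkeeping handled correctly. The only detail worth adding is a preliminary truncation to compact support in $(t,x)$ so that the dominated-convergence steps have an honest $L^2$ dominating function, since $\R_+\times\R$ has infinite measure.
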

\begin{proof}
Same as one--dimensional case.
\end{proof}

Thus, if $f\in L^2(\RR\times\Omega, \mathcal{F}, \mathbb{P})$ there exist $f_n\in \mathcal{S}$ wuch that $f_n$ converges to $f\in L^2(\RR\times\Omega, \mathcal{F}, \mathbb{P})$. By the isometry,
$$
I_n(\omega):= \int_{\RR} f_n(t,x,\omega)\xi(t,x)dxdt
$$
is a Cauchy sequence in $L^2(\Omega, \mathcal{F},\mathbb{P})$. Hence there is a limit point $I\in L^2(\Omega, \mathcal{F},\mathbb{P})$ which is defined to be the stochastic integral $\int_{\RR} f(t,x)\xi(t,x)dxdt.$ This is linear in $f$ and the It\^{o} isometry holds.

We can also define multiple stochastic integrals. Let $\Lambda_k = \{0 < t_1 < \cdots < t_k < \infty\}$.
\begin{multline*}
\int_{\Lambda_k}\int_{\R^k} f(\vec{t},\vec{x})\xi^{\otimes k}(\vec{t},\vec{x})d\vec{t}d\vec{x} \\
:= \int_{\Lambda_k}\int_{\R^k} f(t_1,\ldots,t_k,x_1,\ldots,x_k) \xi(t_1,x_1) \cdots \xi(t_k,x_k)dx_1\cdots dx_k dt_1\cdots dt_k.
\end{multline*}
This is defined inductively. For example,
\begin{multline*}
\int_{\Lambda_2} \int_{\R^2} f(t_1,t_2,x_1,x_2)\xi(t_1,x_1)\xi(t_2,x_2)dx_1dx_2dt_1dt_2 \\
= \int_0^{\infty} \int_{\R} \left[ \int_0^{t_2} \int_{\R} f(t_1,t_2,x_1,x_2)\xi(t_1,x_1)dx_1dt_1 \right] \xi(t_2,x_2)dx_2dt_2,
\end{multline*}
which is well defined because we just showed that the integrand is progressively measurable. The covariance of these multiple stochastic integrals is
$$
\E\left[ \int_{\Lambda_k} \int_{\R^k} f(\vec{t},\vec{x})\xi^{\otimes k}(\vec{t},\vec{x})d\vec{t}d\vec{x} \int_{\Lambda_j} \int_{\R^j} g(\vec{t},\vec{x})\xi^{\otimes j}(\vec{t},\vec{x})d\vec{t}d\vec{x} \right] = \langle f,g \rangle_{L^2(\Lambda_k \times \R_k)} \mathbf{1}_{j=k}.
$$
It also turns out that they span $L^2(\Omega, \mathcal{F}, \mathbb{P})$.

This defines an isometry between $\bigoplus_{k=0}^{\infty} L^2(\Lambda_k \times \R^k,\mathcal{F},\mathbb{P})$ and $L^2(\Omega, \mathcal{F}, \mathbb{P})$ by
$$
X = \sum_{k=0}^{\infty} \int_{\Lambda_k} \int_{\R^k} f_k(\vec{t},\vec{x})\xi^{\otimes k}(\vec{t},\vec{x})d\vec{t}d\vec{x}.
$$
Here $f_0$ is the constant function $\E X$. This summation is called the Wiener chaos decomposition. \index{Chaos series}%

\vspace{0.5in}

Aside: in a sense, the Wiener chaos decomposition holds more generally (that is, for Gaussian Hilbert spaces, not just for stochastic integrals with respect to white noise). If $H$ is a Gaussian Hilbert space in a probability space $(\Omega, \mathcal{F}, \mathbb{P})$ let $\overline{\mathcal{P}}_k(H)$ be the closure in $L^2(\Omega, \mathcal{F}, \mathbb{P})$ of
$$
\mathcal{P}_k(H) = \{p(\xi_1,\ldots,\xi_m): p \text{ polynomial of degree } \leq k, \xi_1,\ldots,\xi_m\in H, m<\infty\}.
$$
Let
$$
H^{:k:} = \displaystyle\overline{\mathcal{P}}_k(H) \ominus \overline{\mathcal{P}}_{k-1}(H) = \overline{\mathcal{P}}_k(H) \cap \overline{\mathcal{P}}_{k-1}(H)^{\perp}.
$$
Set $H^{:0:}$ to be the constants. The Wiener chaos decomposition is then
$$
L^2(\Omega, \mathcal{F}(H), \mathbb{P}) = \bigoplus_{k=0}^{\infty} H^{:k:}.
$$
For example, if the probability space is $(\mathbb{R}, \mathcal{B}, \gamma)$ where $d\gamma = \tfrac{1}{\sqrt{2\pi }} e^{-x^2/x}dx$ and $H=\{tx:t\in \R\}$ is a one--dimensional Gaussian Hilbert space, then $H^{:k:}$ is also one--dimensional and spanned by the $k$--th Hermite polynomial $h_k$. The Wiener chaos decomposition then says that $L^2(d\gamma)$ has $\{h_k\}$ as an orthogonal basis.
This is related to the Wiener chaos decomposition above because polynomials of iterated stochastic integrals are themselves iterated stochastic integrals.

\section{Background on Tracy-Widom distribution}\label{TATW}
The Tracy--Widom distribution \cite{TW} has cumulative distribution function which can be defined as the Fredholm determinant \index{Tracy-Widom distribution}%
$$
F_2(s)=\det(1-A)_{L^2(s)},
$$
where $A$ is the operator on $L^2(s,\infty)$ with kernel
$$
\frac{\Ai(x)\Ai'(y) - \Ai'(x)\Ai(y)}{x-y}.
$$

It can also be defined by
$$
F_2(s) = \exp\left(-\int_s^{\infty} (x-s)q(x)^2dx \right),
$$
where $q(x)$ solves the Painleve II equation
$$
q''(x) = xq(x) + 2q(x)^3
$$
with the boundary condition
$$
q(x) \sim Ai(x), \quad x\rightarrow\infty.
$$
This is called the \textit{Hastings-Mcleod} solution.

\section{Asymptotic analysis}\label{asyana}
The goal is to find asymptotics of expressions of the form
$$
\int g(x) e^{nf(x)}dx
$$
as $n\rightarrow\infty$.

\begin{theorem}
Let $f,g:[a,b]\rightarrow\mathbb{R}$ be smooth. Assume $f$ has a unique global maximum at $c\in (a,b)$ and that $f''(c)<0$. Define $I(n)$ by
$$
\int_a^b g(x)e^{nf(x)}dx = n^{-1/2}e^{nf(c)}I(n).
$$
Then
$$
\lim_{n\rightarrow\infty} I(n) = \sqrt{-\frac{2\pi}{f''(c)}}g(c).
$$
\end{theorem}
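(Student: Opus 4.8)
The plan is to run the standard Laplace's method argument: localize the integral to a shrinking neighborhood of $c$, rescale by $\sqrt n$ so the integral becomes a Gaussian integral in the limit, and justify the limit by dominated convergence.

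\medskip

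\textbf{Step 1 (Localization).} Since $f$ is continuous and $c$ is its unique global maximum on $[a,b]$, for every $\delta>0$ there is $\eta=\eta(\delta)>0$ such that $f(x)\le f(c)-\eta$ for all $x\in[a,b]$ with $|x-c|\ge\delta$ (use compactness of $[a,b]\setminus(c-\delta,c+\delta)$). Hence, writing $M=\sup_{[a,b]}|g|$,
$$
\Bigl|\int_{[a,b]\setminus(c-\delta,c+\delta)} g(x)e^{nf(x)}\,dx\Bigr|\le M(b-a)\,e^{n(f(c)-\eta)},
$$
and multiplying by $n^{1/2}e^{-nf(c)}$ gives $O(n^{1/2}e^{-n\eta})\to0$. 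So $I(n)$ has the same limit as $n^{1/2}e^{-nf(c)}\int_{c-\delta}^{c+\delta}g(x)e^{nf(x)}\,dx$, and we are free to pick $\delta$ small.

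\medskip

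\textbf{Step 2 (Rescaling and a uniform bound).} Fix $\delta$ small enough that $f''(x)\le\tfrac12 f''(c)<0$ on $[c-\delta,c+\delta]$, which is possible by continuity of $f''$. By Taylor's theorem with Lagrange remainder, for $|s|\le\delta$ there is $\xi$ between $c$ and $c+s$ with $f(c+s)-f(c)=\tfrac12 f''(\xi)s^2\le\tfrac14 f''(c)s^2$. Substituting $x=c+t/\sqrt n$ in the localized integral,
$$
n^{1/2}e^{-nf(c)}\int_{c-\delta}^{c+\delta}g(x)e^{nf(x)}\,dx
=\int_{-\delta\sqrt n}^{\delta\sqrt n} g\!\left(c+\tfrac t{\sqrt n}\right)\exp\!\left(n\bigl[f(c+\tfrac t{\sqrt n})-f(c)\bigr]\right)dt.
$$
The integrand is bounded in absolute value by $M\exp(\tfrac14 f''(c)t^2)$ on the whole range $|t|\le\delta\sqrt n$ (by the estimate above with $s=t/\sqrt n$), and this bound is integrable on $\mathbb R$ since $f''(c)<0$.

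\medskip

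\textbf{Step 3 (Pointwise limit and conclusion).} For each fixed $t$, Taylor expansion gives $n[f(c+t/\sqrt n)-f(c)]=\tfrac12 f''(c)t^2+n\,o(t^2/n)\to\tfrac12 f''(c)t^2$ as $n\to\infty$, and $g(c+t/\sqrt n)\to g(c)$ by continuity; so the integrand (extended by $0$ outside $[-\delta\sqrt n,\delta\sqrt n]$) converges pointwise to $g(c)\exp(\tfrac12 f''(c)t^2)$. By the dominated convergence theorem with the dominating function from Step 2,
$$
\lim_{n\to\infty}I(n)=g(c)\int_{-\infty}^{\infty}\exp\!\left(\tfrac12 f''(c)t^2\right)dt=g(c)\sqrt{\frac{2\pi}{-f''(c)}},
$$
using the Gaussian integral $\int_{\mathbb R}e^{-ax^2/2}\,dx=\sqrt{2\pi/a}$ with $a=-f''(c)>0$.

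\medskip

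The only genuinely delicate point is the \emph{uniform} upper bound in Step 2: one must control $n[f(c+t/\sqrt n)-f(c)]$ from above by a fixed integrable Gaussian exponent simultaneously for all $|t|\le\delta\sqrt n$, not merely pointwise. This is exactly what the choice of $\delta$ with $f''\le\tfrac12 f''(c)$ on $[c-\delta,c+\delta]$ buys us via the Lagrange remainder; everything else is bookkeeping. (If one only assumes $f\in C^2$ rather than smooth, the same argument works verbatim since only $f''$ near $c$ and continuity of $f$ are used.)
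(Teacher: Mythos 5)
Your proof is correct and follows the same route as the paper's (which is only a sketch): localize near $c$, substitute $x=c+t/\sqrt n$, Taylor expand, and evaluate the Gaussian integral. You have simply supplied the rigor the sketch omits --- the exponential smallness of the tail and the uniform Gaussian domination needed for dominated convergence --- and the one implicit ingredient worth stating explicitly is that $f'(c)=0$ because $c$ is an interior maximum, which is what lets you write the Lagrange remainder as $\tfrac12 f''(\xi)s^2$ with no first-order term.
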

\begin{proof} (Sketch)
Since $f(x)$ has a global maximum at $c,$ then as $n\rightarrow\infty$ most of the contributions to the integral come in a small neighbourhood around $c$. Setting $y=n^{1/2}(x-c),$
$$
\int_a^b g(x)e^{nf(x)}dx  = \int_{-n^{1/2}(c-a)}^{n^{1/2}(b-c)} g(c+n^{-1/2}y)e^{nf(c+n^{-1/2}y)}n^{-1/2}dy.
$$
Now use the Taylor expansion $f(c+n^{-1/2}y) \approx f(c) + \tfrac{1}{2}f''(c)n^{-1}y^2$ to get that
$$
\lim_{n\rightarrow\infty} I(n) = g(c) \int_{-\infty}^{\infty} e^{f''(c)y^2/2}dy = \sqrt{-\frac{2\pi}{f''(c)}}g(c).
$$
\end{proof}

Here is an example to find the asymptotics of $n!$. Use that
$$
n! = \Gamma(n+1) = \int_0^{\infty} t^ne^{-t}dt = n^{n+1} \int_0^{\infty} x^ne^{-nx}dx
$$
by the substitution $t=nx$. This is of the right form, with $f(x)=\log x-x,g(x)=1.$ Solving for the maximum of $f$ on $(0,\infty)$ yields $c=1$ with $f(c)=-1,f''(c)=-1$. Therefore
$$
n! \approx n^{n}e^{-n}\sqrt{2\pi n},
$$
which is Stirling's formula.

We will now use similar ideas to find the behaviour of the tails of the Airy function
$$
\Ai(x) = \frac{1}{2\pi } \int_{-\infty}^{\infty} e^{i(t^3/3+xt)}dt.
$$
Note that, as written, the integral is only defined conditionally, since the integrand does not converge to $0$ at $t\rightarrow\pm \infty$. This can be resolved by deforming the contour integral to
$$
\Ai(x) = \frac{1}{2\pi } \int_{\infty\cdot e^{5\pi i/6}}^{\infty\cdot e^{\pi i/6}} e^{i(t^3/3+xt)}dt
$$
Note that the integrand is not yet in the form $e^{xf(t)}$. To resolve this, simply take the substitution $t=x^{1/2}z$ to get
$$
\Ai(x) = \frac{x^{1/2}}{2\pi} \int_{\infty\cdot e^{5\pi i/6}}^{\infty\cdot e^{\pi i/6}} e^{x^{3/2}f(z)}dz
$$
where $f(z)=i(z^3/3+z)$.

As before, we want to find the saddle points of $f(z)$. Solving for $f'(z)=0$, we find that the saddle points are at $t=\pm i $. So we know that we want to deform the contour to go through either $i$ or $-i$ (or both). Furthermore, the absolute value of the integrand should decay quickly as it moves away from the saddle point. Since $\left| e^{x^{3/2}f(z)}\right| = e^{Re(x^{3/2}f(z))}$, we should look for contours where $Re(f(z))<Re(f(i))$ or $Re(f(z)<Re(f(-i))$. The first case is shown in Figure \ref{Positive}.

\begin{figure}[ht]
\caption{Shaded regions indicate where $Re(f(z))<Re(f(i))$.}
\label{Positive}
\begin{center}
\includegraphics[height=6cm]{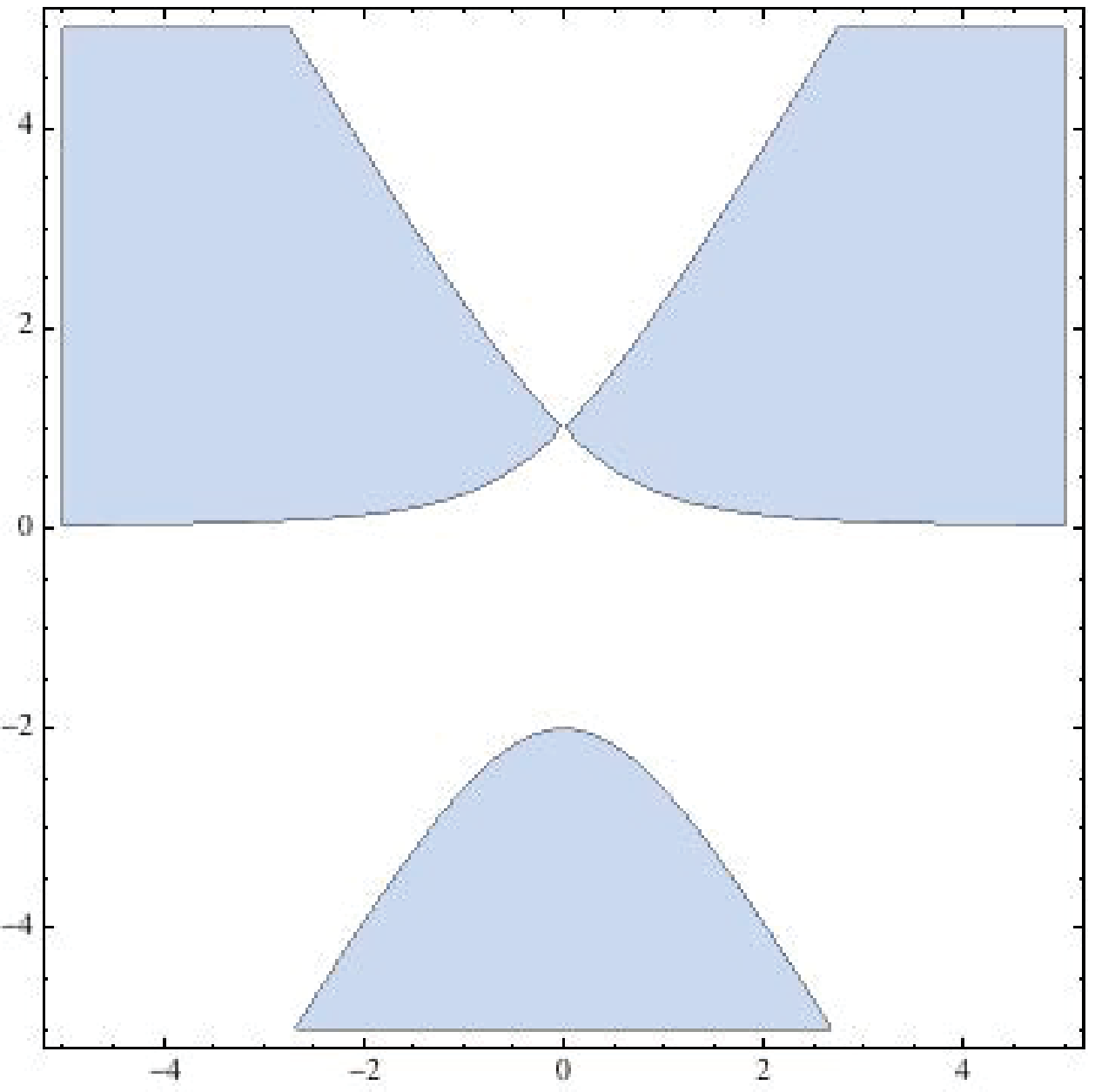}
\end{center}
\end{figure}

Therefore, the contours can be deformed to begin near $\infty e^{5\pi i/6}$, go through $i$ and then end near $\infty e^{i \pi/6}.$ Using the Taylor expansion
$$
f(z) \approx -\frac{2}{3}-(z-i)^2 + \cdots
$$
and the substitution $u=x^{3/4}(z-i)$ we get
$$
\Ai(x) \approx \frac{x^{1/2}e^{-2x^{3/2}/3}}{2\pi} \int e^{-x^{3/2}(z-i)^2}dz \approx \frac{e^{-2x^{3/2}/3}}{2\pi x^{1/4}}  \int_{-\infty}^{\infty} e^{-u^2}du,
$$
This finally yields
$$
\Ai(x) \approx \frac{e^{-\tfrac{2}{3}x^{3/2}}}{2\sqrt{\pi}x^{1/4}}.
$$

\begin{exercise}
Explain what would go wrong if we tried to use the $-i$ saddle point. Figure \ref{Positive2} shows a plot of $Re(f(z))$. Hint: it is important to keep in mind the endpoints of the contour.
\end{exercise}
\begin{figure}[ht]
\caption{Shaded regions indicate where $Re(f(z))<Re(f(-i))$.}
\label{Positive2}
\begin{center}
\includegraphics[height=6cm]{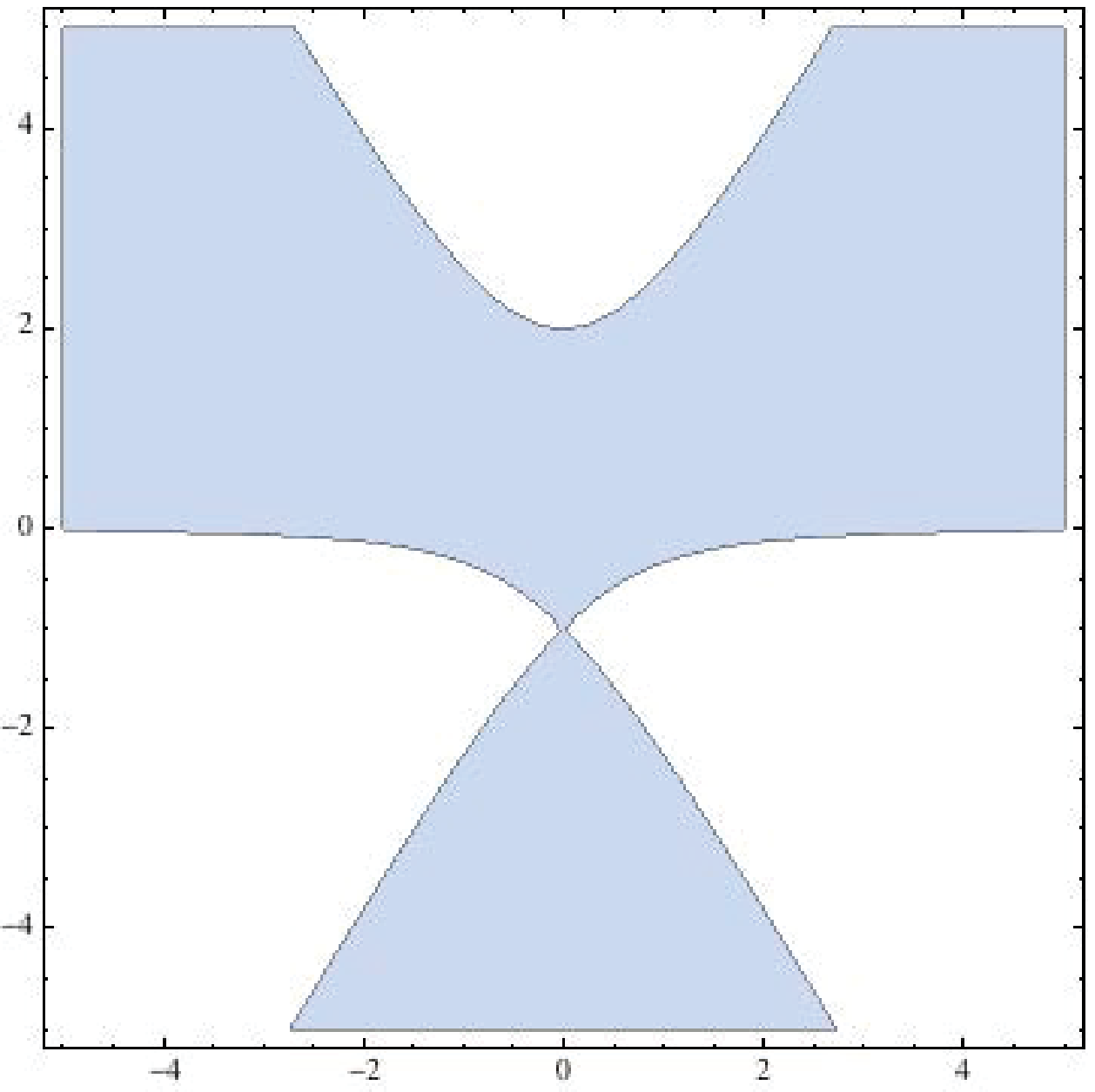}
\end{center}
\end{figure}

\begin{exercise}
Use a similar argument to show that as $x\rightarrow\infty$
$$
\Ai(-x) \approx \frac{1}{\sqrt{\pi}x^{1/4}}\cos\left( \frac{\pi}{4}- \frac{2x^{3/2}}{3}\right).
$$
Figure \ref{Negative} should be helpful.
\end{exercise}

\begin{figure}[ht]
\caption{Shaded regions indicate where $Re(f(z))<Re(f(1))$ for the function  $f(z)=i(z^3/3-z)$.}
\label{Negative}
\begin{center}
\includegraphics[height=6cm,scale=0.75]{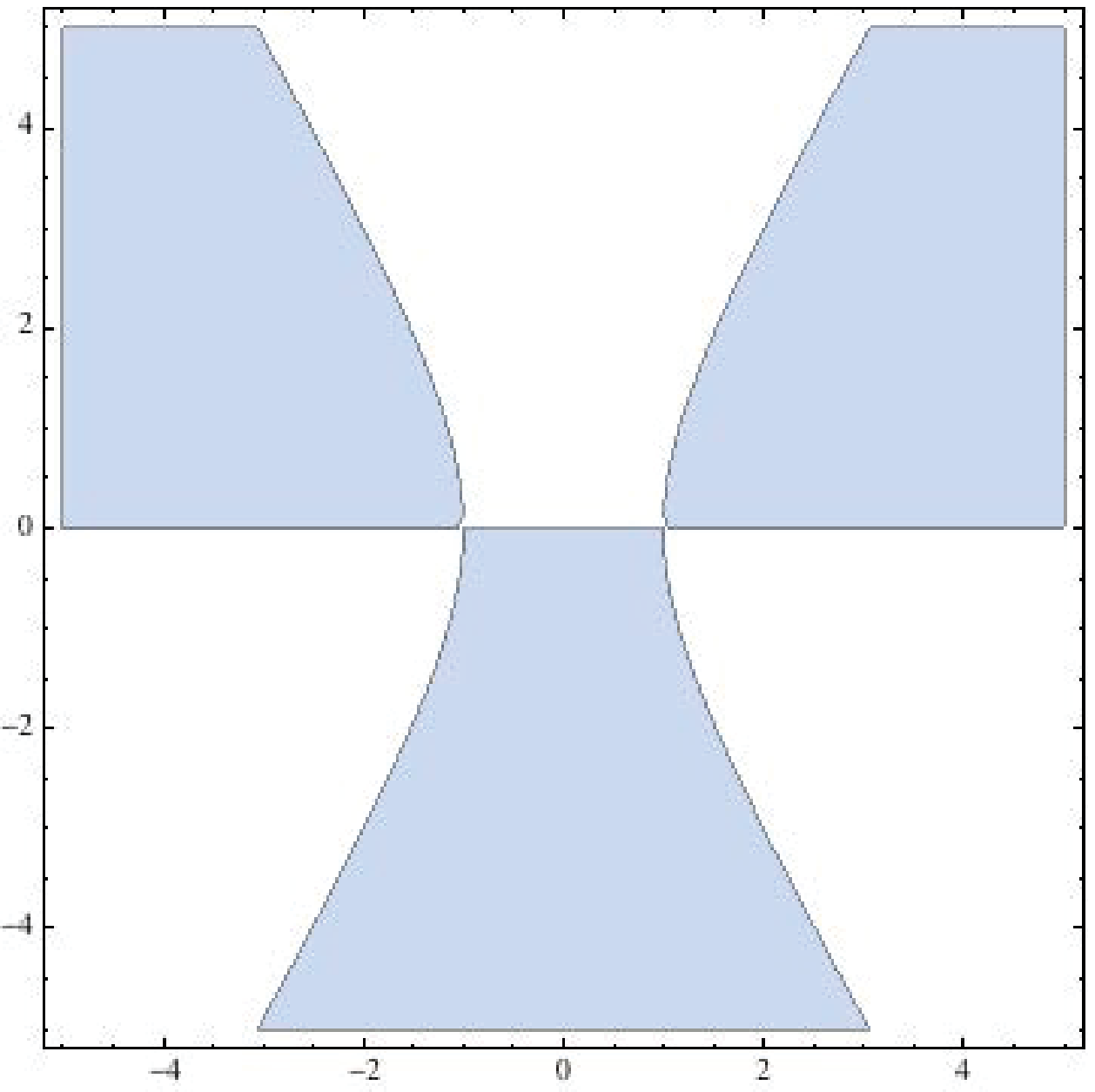}
\end{center}
\end{figure}

\bibliographystyle{amsplain}


\end{document}